\documentclass{amsart}
\usepackage{newlfont}
\usepackage{graphicx}
\usepackage{indentfirst}
\usepackage{fancyhdr}
\usepackage{amsmath}
\usepackage{amsthm}
\usepackage{latexsym}
\usepackage[psamsfonts]{amssymb}
\usepackage{mathrsfs}
\usepackage{frcursive}
%\usepackage{hyperref}
%     If you need symbols beyond the basic set, uncomment this command.
%\usepackage{amssymb}

%     If your article includes graphics, uncomment this command.
%\usepackage{graphicx}

%     If the article includes commutative diagrams, ...
%\usepackage[cmtip,all]{xy}
\setlength{\parindent}{0pt}
\newtheorem{theorem}{Theorem}
\newtheorem{proposition}[theorem]{Proposition}
\newtheorem{lemma}[theorem]{Lemma}
\newtheorem{corollary}[theorem]{Corollary}
\newtheorem{definition}[theorem]{Definition}

%\newcounter{obsctr}
%\newenvironment{observation}{\stepcounter{obsctr}%
%	\vskip 4pt \noindent {\bf \arabic{obsctr}.}\hskip 5pt }{\vskip 4pt}
\newtheorem{remark}[theorem]{Remark}
\renewcommand{\thetheorem}{\thesection.\arabic{theorem}}
\renewcommand{\theproposition}{\thesection.\arabic{proposition}}
\renewcommand{\thelemma}{\thesection.\arabic{lemma}}
\renewcommand{\thedefinition}{\thesection.\arabic{definition}}

\renewcommand{\theequation}{\thesection.\arabic{equation}}
\renewcommand{\theremark}{\thesection.\arabic{remark}}
%
%\setcounter{subsection}{3}
%\renewcommand{\theparagraph}{\thesection.\arabic{paragraph}}% How paragraphs are numbered
%\setcounter{secnumdepth}{4}% Number up to paragraphs
%
%     Update the information and uncomment if AMS is not the copyright
%     holder.
%\copyrightinfo{2009}{American Mathematical Society}

%\newtheorem{theorem}{Theorem}[section]
%\newtheorem{lemma}[theorem]{Lemma}

%\theoremstyle{definition}
%\newtheorem{definition}[theorem]{Definition}
%\newtheorem{example}[theorem]{Example}
%\newtheorem{xca}[theorem]{Exercise}

\DeclareMathOperator{\sign}{sign}
\DeclareMathOperator{\supp}{supp}

%\theoremstyle{remark}
%\newtheorem{remark}[theorem]{Remark}

%\numberwithin{equation}{section}

%
\allowdisplaybreaks

\begin{document}

% \title[short text for running head]{full title}
\title[On a generalization of the M\'etivier operator]{On the sharp Gevrey regularity for a generalization of the M\'etivier operator}

%    Only \author and \address are required; other information is
%    optional.  Remove any unused author tags.

%    author one information
% \author[short version for running head]{name for top of paper}
\author{G. CHINNI}
\address{Department of Mathematics, University of Bologna, Piazza di
  Porta S. Donato 5, 40127 Bologna, Italy}
%\curraddr{}
\email{gregorio.chinni@gmail.com}
% \thanks{The author is supported by the Austrian Science Fund (FWF), Lise-Meitner position, project no. M2324-N35.}
%%%%%%%%%%%%%%%%%%%%%%%%%%%%%%%%%%%%%%%%%%%%%%%%%%%%%%%%%%%%%%%%%%%%%%%%%%
%    \subjclass is required.
\subjclass[2010]{35H10, 35H20, 35B65 }
\date{\today}
%
%    Abstract is required.
\begin{abstract}
  We prove a sharp Gevrey hypoellipticity for the operator 
	\begin{align*}
	D_{x}^{2}+\left(x^{2n+1}D_{y}\right)^{2}+\left(x^{n}y^{m}D_{y}\right)^{2},
	\end{align*}
in $\Omega$ open neighborhood of the origin in $\mathbb{R}^{2}$, where
$n$ and $m$ are positive integers. The operator is a non trivial
generalization of the M\'etivier operator studied in
\cite{M81}. However it has a symplectic characteristic manifold and a
non symplectic stratum according to the Poisson-Treves
stratification. According to Treves conjecture it turns out not to be
analytic hypoelliptic.  
\end{abstract}

\maketitle
\tableofcontents
%    Text of article.
%%%%%%%%%%%%%%%%%%%%%%%%%%%%%%%%%%%%%%%%%%%%%%%%%%%%%%%%%%%%%%%%%%%%%%%%
%%%%%%%%%%%%%%%%%%%%========Introduction===========%%%%%%%%%%%%%%%%%%%%
\section{Introduction}
\renewcommand{\theequation}{\thesection.\arabic{equation}}
\setcounter{equation}{0}
\setcounter{theorem}{0}
\noindent
In \cite{M81} G. M\'etivier studied the non-analytic hypoellipticity
for a class of second order partial differential operators with
analytic coefficients  
in $\Omega$, open neighborhood of the origin in $\mathbb{R}^{n}$,
whose principal symbol vanishes exactly of order two on a submanifold
of 
$T^{*}\Omega$.

In the case of sum of squares of vector fields the most representative
model of such class is the following  
\begin{equation}
  \label{Met-Op}
D_{x}^{2}+ x^{2}D_{y}^{2} + \left(y D_{y}\right)^{2}.
\end{equation}
In \cite{M81} the author proves that the operator (\ref{Met-Op}) is
$G^{2}$-hypoelliptic and not better in a neighborhood of the origin.

This means that if $ u $ is a smooth function in the neighborhood of
the origin $ U $ solving there 
the equation $ Pu = f $, where $ P $ denotes the operator in
\eqref{Met-Op} and $ f $ is analytic in $ U $, then $ u $ belongs to the Gevrey
class $ G^{2}(U) $ and there are no solutions in $ G^{s}(U) $ with $ 1
\leq s < 2$.

Here $G^{s}(U)$, $s\geq 1$,  denotes the space of Gevrey functions in
$U$, i.e. the set of all $f \in C^{\infty}(U)$ 
such that for every compact set $K \subset U$ there are two positive
constants $C_{K}$ and $A$ such that for every $\alpha \in
\mathbb{Z}^{n}_{+}$
$$
|D^{\alpha}f(x)| \leq A C^{|\alpha|}_{K} |\alpha|^{s|\alpha|}, \qquad \forall x\in K.
$$

The Gevrey index 2 corresponds to that obtained by Derridj and
Zuily, \cite{DZ} (see also \cite{ABC}), when applied to this operator.

For this type of operators H\"ormander, \cite{H67}, stated the
condition for $ C^{\infty} $ hypoellipticity:
\begin{itemize}
  \item[\textbf{(H)}]{}
the Lie algebra generated by the vector fields and their commutators
has dimension equal to the dimension of the ambient space.
\end{itemize}
We recall that a sum of squares operator satisfying the H\"ormander
condition is said to be $G^{s}$-hypoelliptic, $s\geq 1$, 
in $\Omega$, open subset of $\mathbb{R}^{n}$, if for any $U$ open
subset of $\Omega$ the conditions 
$u\in \mathscr{D}'\left(U\right)$ and $Pu \in G^{s}(U)$ imply that $u \in G^{s}(U)$. 

Needless to say $G^{1}(U) = C^{\omega}(U)$, the class of real-analytic functions on $U$.

\bigskip

One of the main motivations to study the optimality of the Gevrey
regularity of solutions to sums of squares in two variables is due to
the fact that in dimension greater or equal than 4 the Treves
conjecture has been proved to be false, see \cite{ABM-16},
\cite{BM-16}. In dimension 3 there are no results although in
\cite{bm-surv} a candidate has been produced that should violate the
conjecture. On the contrary there are good reasons to surmise that the
conjecture of Treves holds in two variables. We refer to \cite{Tr-99},
\cite{BoTr-04} for more details on the statement of the conjecture, as
well as to \cite{bm-surv} for a discussion of both the 3- and
2-dimensional cases.

This implies that it is of crucial importance to know if a certain
Gevrey regularity, which may be relatively easy to obtain by using $
L^{2} $ a priori estimates, is optimal or not.

In two variables this becomes particularly difficult because if the
characteristic set is not a symplectic real analytic manifold, the
Hamilton leaves corresponding to the kernel of the symplectic form
have injective projection onto the fibers of the cotangent bundle,
thus causing great technical complication in the construction of a
singular solution, which is the method of proving optimality.

In order to exhibit a singular solution, meaning a solution which is
not more regular than predicted, one constructs first a so called
asymptotic formal solution. Once this is done, the formal solution
gives a true solution solving the same equation with possibly a
different right hand side and then one can finally achieve the proof. 

The construction of the formal solution in \cite{M81}, e.g. for the
operator in \eqref{Met-Op}, uses the spectral theory of the harmonic
oscillator, i.e. of the operator $ D_{x}^{2} + x^{2} $, the variable $
y $ being reduced to a parameter. For the eigenfunctions of the
harmonic oscillator there are three terms recurrence formulas relating
the derivative of an eigenfunction to those up and down one notch.
As a result the $ k $-th derivative with respect to $ x $ of an
eigenfunction can be expressed as a linear combination of $ 2k $
eigenfunctions and this makes possible turning the formal solution
into a true one.

In the case of an anharmonic oscillator, which occurs if one considers
cases vanishing of order higher than 2, Gundersen has shown in
\cite{Gundersen_76} that such recurrence formulas do not exist, so
that the optimality for the operators
$$ 
D_{x}^{2} + x^{2(q-1)} D_{y}^{2} + (y^{k} D_{y})^{2} ,
$$
as well as
$$ 
D_{x}^{2} + x^{2(q-1)} D_{y}^{2} + x^{2(p-1)} (y^{k} D_{y})^{2} ,
\quad 1 <p < q,  
$$
is not known.

In 2001 Bender and Wang, \cite{Bender2001}, studied the following
class of eigenvalue problems 
\begin{align}\label{BW_EigPr}
-u''(t) + t^{2N+2} u(t)= t^{N} E \ u(t), \qquad N=-1,0,1,2,\dots,
\end{align}
on the interval $-\infty <t < +\infty$. Such a kind of problem arises
in different contexts in physics, like the fluid-flow with a resonant
internal boundary layer and in supersymmetric quantum mechanics. The
eigenfunction $ u $ above is a confluent hypergeometric function and
moreover can be written as a product of a polynomial and a function
exponentially vanishing at infinity.

In this paper we study the optimality of the Gevrey regularity for the
operator 
\begin{equation}
  \label{Op-GM}
M_{n,m}(x,y;D_{x},D_{y}) =
D_{x}^{2}+\left(x^{2n+1}D_{y}\right)^{2}+\left(x^{n}y^{m}D_{y}\right)^{2},
\end{equation}
in $\Omega$ open neighborhood of the origin in $\mathbb{R}^{2}$. Here
$n$ and $m$ are positive integers. 
We point out that $M_{n,m}$ is a generalization of the M\'etivier
operator, \eqref{Met-Op}, which corresponds to the case $n=0$ and $m=1$.

This operator allows us to use the recurrence relations between the
eigenfunctions of the operator in order to construct an asymptotic
solution. 

Here is the statement of the result.
\begin{theorem}\label{Th}
The operator $M_{n,m}$, (\ref{Op-GM}), is
$G^{\frac{2m}{2m-1}}$-hypoelliptic and not better in any neighborhood
of the origin.   
\end{theorem}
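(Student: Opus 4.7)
The theorem has two halves---the upper bound $G^{\frac{2m}{2m-1}}$-hypoellipticity and the sharpness---so I treat them separately.

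\textbf{Upper bound.} The plan is to start from the basic sub-elliptic estimate
\[
\|D_x u\|^2+\|x^{2n+1}D_y u\|^2+\|x^{n}y^{m}D_y u\|^2 \le C\,|\langle M_{n,m}u,u\rangle|,
\]
valid on $C_c^\infty$, combined with the iterated H\"ormander bracket $[D_x,[D_x,\ldots,[D_x,x^{2n+1}D_y]\ldots]]=(2n+1)!\,D_y$, which yields a sub-elliptic gain of $1/(2n+2)$ and in particular $C^\infty$-hypoellipticity. The raw Derridj--Zuily iteration gives only $G^{2n+2}$; to sharpen to $\frac{2m}{2m-1}$ one localises around the characteristic manifold $\Sigma=\{x=\xi=0\}$. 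Away from $\{y=0\}$ the operator has the form $D_x^2+(a(x)D_y)^2$ modulo lower-order terms, with $a(x)=x^{n}\sqrt{x^{2n+2}+y_0^{2m}}$ vanishing only at $x=0$, and is analytic hypoelliptic there; the Gevrey loss is thus concentrated on the non-symplectic stratum $\{x=y=\xi=0\}$. A weighted iteration of commutator estimates that balances the $x$-vanishing of order $4n+2$ in $(x^{2n+1}D_y)^2$ against the $y$-vanishing of order $2m$ in $(x^n y^m D_y)^2$ then delivers $\frac{2m}{2m-1}$.

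\textbf{Sharpness.} I construct a singular solution via a WKB-type argument. The ansatz is of the form
\[
u_\lambda(x,y)=e^{i\lambda y}\,\sum_{k\ge 0}\lambda^{-k/\nu}\,v_k(X,Y),\qquad X=\lambda^{1/(2n+2)}x,\quad Y=\lambda^{1/(2m)}y,
\]
with $\nu$ chosen so that the orders match. A direct substitution shows the three principal terms of $M_{n,m}$ all rescale to the common order $\lambda^{1/(n+1)}$, and at leading order one is led to the spectral analysis of the model operator
\[
\mathcal L_Y \;=\; D_X^2 + X^{4n+2} + X^{2n}Y^{2m},
\]
with $Y$ treated as a parameter. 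The eigenvalue problem for $\mathcal L_Y$ is precisely of Bender--Wang type \eqref{BW_EigPr} (with $N=2n$): its eigenfunctions are confluent hypergeometric functions---polynomials times exponentially decaying profiles---and, crucially, satisfy finite-term recurrence relations expressing $D_X$ of an eigenfunction as a finite linear combination of other eigenfunctions. This is the analogue of the three-term recurrence for the harmonic oscillator exploited by M\'etivier in \cite{M81}, and precisely the feature Gundersen's theorem denies for generic anharmonic oscillators; it is preserved here because of the specific exponents $2n+1,\,n,\,m$ in the definition of $M_{n,m}$.

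With the recurrences in hand, the transport hierarchy obtained by matching subsequent powers of $\lambda$ can be solved inductively: each $v_k$ is determined from $v_0,\ldots,v_{k-1}$ by a right-hand side which, thanks to the recurrences, is a finite linear combination of eigenfunctions of $\mathcal L_Y$; the Fredholm condition fixes a scalar at each stage and $v_k$ itself is again a finite such combination, with explicit control on the coefficient growth. Truncating at an optimal $N=N(\lambda)\sim\lambda^\gamma$ yields an approximate solution with small residual, which is upgraded to a true solution by adding a correction $w_\lambda$ satisfying $M_{n,m}w_\lambda=-M_{n,m}u_\lambda^{(N)}$ via the $C^\infty$-hypoellipticity already established. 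Summing the resulting solutions over a lacunary sequence $\lambda_j\to\infty$ produces a distribution whose Taylor coefficients at the origin grow at exactly the rate $|\alpha|!^{\frac{2m}{2m-1}}$, excluding $G^{s'}$-regularity for $s'<\frac{2m}{2m-1}$. The main obstacle is the middle step: establishing and exploiting the Bender--Wang recurrences for $\mathcal L_Y$, without which one can neither solve the transport hierarchy nor control the Gevrey growth; the rescaling, the truncation--correction mechanism, and the upgrade to a true solution are technical but standard.
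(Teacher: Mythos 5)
You have correctly identified the central structural idea that makes the paper work: the rescaled model is a Bender--Wang eigenvalue problem (indeed with $N=2n$, not $N=2n+1$ as the appendix misprints), whose eigenfunctions admit finite-term recurrence relations (Lemma \ref{Rk-Ric-Rel}) of the kind Gundersen's theorem forbids for generic anharmonic oscillators, and this is exactly what lets the transport hierarchy close. The rescaling exponent in $x$ also matches the paper's $\gamma=m/((n+1)(2m-1))$. However, your mechanism for handling $y$ is not the paper's and, as written, does not work. The paper never rescales $y$: in $\mathscr{K}[u](x,y)=\int_0^\infty e^{i\rho^\theta y}\rho^r u(\rho^\gamma x,\rho)\,d\rho$ the variable $y$ lives only in the oscillatory phase, and the $y^{2m}$ factor of the operator is converted (Lemma \ref{Tech_L3}) into $\partial_\rho$-derivatives of the amplitude. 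The eigenvalue of the Bender--Wang problem is then supplied by $\partial_\rho^{2m}e^{-c_1\rho}=c_1^{2m}e^{-c_1\rho}$ with the \emph{complex} constant $c_1$ chosen so that $\bigl(\frac{2m-1}{2im}\bigr)^{2m}c_1^{2m}=-E_0$. By contrast, your model $\mathcal{L}_Y=D_X^2+X^{4n+2}+X^{2n}Y^{2m}$ with real $Y$ as a parameter is not a Bender--Wang eigenvalue problem: for $\mathcal{L}_Y u=0$ you would need $E=-Y^{2m}<0$, which lies outside the (positive) spectrum, and $\mathcal{L}_Y u=\mu u$ has the wrong weight on the right-hand side. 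Moreover the rescaling $Y=\lambda^{1/(2m)}y$ goes the wrong way ($Y\to\infty$ for fixed $y\neq0$). The oscillatory-integral-in-$\rho$ device is essential, not an alternative packaging.

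There is also a genuine gap in your "upgrade to a true solution" step. You propose solving $M_{n,m}w_\lambda=-M_{n,m}u_\lambda^{(N)}$ "via $C^\infty$-hypoellipticity," but hypoellipticity is a regularity statement, not a solvability statement, and even granting local solvability one must show the correction $w_\lambda$ does not destroy the singularity carried by $u_\lambda^{(N)}$. The paper avoids this entirely: it never produces a homogeneous singular solution. Instead, cutoffs $\omega_\ell$ in $\rho$ make $M_{n,m}\mathscr{K}[\tilde u]$ land in the Beurling class $\gamma^{2m/(2m-1)}_g$ (Proposition \ref{gamma}); one then argues by contradiction using M\'etivier's characterization of $G^s$-hypoellipticity via a priori estimates (Theorem \ref{Met_80_In}): if the operator were $G^s$-hypoelliptic for some $s<\frac{2m}{2m-1}$, the estimate would force $\mathscr{K}[\tilde u]\in\gamma^{2m/(2m-1)}$ near the origin, hence a super-exponential Fourier decay on $\{x=0\}$ (Lemma \ref{Fourier1}), which is incompatible with the explicit lower bound on $|\mathscr{F}(\mathscr{K}[\tilde u])(0,\eta)|$ coming from the leading term $g_{0,0}v_0$ (Lemma \ref{Fourier2}). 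Finally, note that the positive half of the theorem (the $G^{2m/(2m-1)}$-hypoellipticity) is not established in this paper by commutator iteration as you sketch; it is taken from \cite{BT2014} (remark (a) after the statement), and the content of the proof is entirely the optimality.
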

A few remarks are in order.
\begin{itemize}
	\item[(a)] The Gevrey regularity obtained for the operator
          $M_{n,m}$ is in accordance with that predicted in
          \cite{BT2014}.
	\item [(b)] The characterization in terms of
          \textit{Gel'fand-Shilov spaces}, see Theorem \ref{G-S_Char}
          below, of the eigenfunctions of the eigenvalue problem
          (\ref{Eig_P}) allows us to precisely compute the partial
          Gevrey regularity for $M_{n,m}$. This means that we find the
          Gevrey regularity with respect to $ x $ and with respect to
          $ y $ of the solutions: 
	  the operator (\ref{Op-GM}) is Gevrey hypoelliptc of order
          $s_{0}= 1+ \frac{1}{(2m-1)(2n+2)}$ 
	  with respect to the variable $x$ and of order $s_{1}=
          \frac{2m}{2m-1}$ with respect to the variable $y$ 
	and not better in any neighborhood of the origin.
	We recall that a smooth function $u(x,y)$ belongs to the
        non-isotropic Gevrey space $G^{(s_{0},s_{1})}(U)$, 
	$U$ open subset of $\mathbb{R}^{2}$, if for every compact set
        $K \subset U$ there are two positive constants $C_{K}$ and $A$
        such that for every $\alpha, \, \beta \in \mathbb{Z}_{+}$
$$
|D^{\alpha}_{x}D^{\beta}_{y} u(x,y)| \leq A C^{\alpha +\beta}_{K}
\alpha^{s_{0}\alpha}\beta^{s_{1}\beta}, \qquad  (x,y)\in K. 
$$
	\item[(c)] As well as the M\'etivier operator (\ref{Met-Op}),
          the operator $M_{n,m}$, in (\ref{Op-GM}), is not globally
          analytic hypoelliptic on the two dimensional torus. This
          means that if for instance we consider
          in $\mathbb{T}^{2}$ the operator 
$$
D_{x}^{2}+ \left(\sin^{2n+1}(x)D_{y}\right)^{2}+
\left(\sin^{n}(x)\sin^{m}(y)D_{y}\right)^{2} ,
$$
	then it is not globally analytic hypoelliptic in
        $\mathbb{T}^{2}$, it is $G^{s}$-globally hypoelliptic
	for every $s\geq \frac{2m}{2m-1}$. This can be obtained
       via Theorem \ref{Th} and or following the proof of the Theorem 2.1 in  \cite{Tr-2006} (p. 325)
       or following the proof of the Proposition 1.1 in \cite{BC-2022}, concerning the subject see also \cite{chinni2}. 
       We recall that a partial differential
        operator $P$ is said to be $G^{s}$-globally hupoelliptic,
        $s\geq 1$, in $\mathbb{T}^{n}$ if the condition $u\in
        \mathscr{D}'\left(\mathbb{T}^{n}\right)$ and $Pu \in
        G^{s}(\mathbb{T}^{n})$ imply that $u \in
        G^{s}(\mathbb{T}^{n})$. 
	Here $G^{s}(\mathbb{T}^{n})$ denotes the space of the Gevrey
        functions on $\mathbb{T}^{n}$.
\end{itemize}
Here is the plan of the paper.
In the first section we construct a formal solution to the
problem $M_{n,m}u=0$. More precisely first of all we construct a formal
solution and subsequently,
in order to justify the formal steps we went through, we turn the
formal solution, $ u $, into a true solution, $ \tilde{u} $, with the
aid of a family of smooth cutoff functions. 
Then $M_{n,m}\tilde{u}$ belongs to a suitable function space.

In the second section we deduce the proof of the Theorem \ref{Th}.
In order to focus on the construction of the formal solution, we
shifted to the appendix a number of both known and unknown facts about
the properties of the eigenfunctions of the eigenvalue problem
(\ref{BW_EigPr}), in the case $N=2n+1$, as well as some auxiliary
technical lemmas.

\noindent
\textbf{Aknowledgements.} The preparation of this manuscript has been
done while the author was supported by the Austrian Science Fund
(FWF), Lise-Meitner position, project no. M2324-N35. The author would
like to thank the people of the Department of Mathematics of Vienna
University and in particular professor B. Lamel, for their hospitality and
the numerous mathematical discussions. Moreover I would like to thank
professor A. Bove for a number of very useful an fruitful discussions and 
for his comments on various aspects of this work.

%
%%%%%%%%%%%%%%%%%%%%----Approximate solution----%%%%%%%%%%%%%%%%%%%%%%%%%%%%%%%%%%%%
%
\section{Formal solution of the operator \eqref{Op-GM}
} 
\renewcommand{\theequation}{\thesection.\arabic{equation}}
\setcounter{equation}{0} \setcounter{theorem}{0}
\noindent
Following the ideas in \cite{M81}, the purpose of the present section is to construct a formal and the associate ``approximate" solution to the problem
$M_{n,m}u=0$ of the form
\begin{align}\label{Sol_0}
\mathscr{K}\!\left[u\right]\!(x,y)\!= \!\int_{0}^{+\infty} \!\!\!e^{i\rho^{\theta}y} \rho^{r} u\left(\rho^{\gamma}x,\rho\right) d\rho
= \int_{0}^{+\infty} \!\!\!e^{i\rho^{\theta}y} \rho^{r} \left[u\left(t,\rho\right)\right]_{|_{t=\rho^{\gamma}x}}\!\! d\rho,
\end{align}
where $\theta$, $\gamma$ and $r$ are parameters that will be chosen later and the function $u(t,\rho)$
is an infinitely differentiable function in $\mathbb{R}^{2}_{t,\rho} $ with support in the region $\rho>0$ and
rapidly decreasing as $\rho $ goes to infinity. 
The first step will be to establish the values of the parameters $\theta$ and $\gamma$.\\
We have
\begin{enumerate}
	\item $D_{x}^{2}\mathscr{K}\left[u\right]
	=\displaystyle\int_{0}^{+\infty} \!\! e^{i\rho^{\theta}y} \rho^{r} \rho^{2\gamma}\left[D_{t}^{2}u(t,\rho)\right]_{|_{t=\rho^{\gamma}x}} \, d\rho$;
	\item $\left(x^{2n+1}D_{y}\right)^{2}\mathscr{K}\left[u\right]
	=\displaystyle\int_{0}^{+\infty} \!\! e^{i\rho^{\theta}y} \rho^{r} \rho^{2\theta-2(2n+1)\gamma}\left[t^{2(2n+1)}u(t,\rho)\right]_{|_{t=\rho^{\gamma}x}}\!\!\! d\rho$;
	\item and by applying the Lemma \ref{Tech_L3}, see Appendix, 
	\begin{align*}
	&\left(x^{n}y^{m}D_{y}\right)^{2}\mathscr{K}\left[u\right]= \left(x^{2n}y^{2m}D_{y}^{2}+ \frac{m}{i} x^{2n}y^{2m-1}D_{y}\right) \mathscr{K}\left[u\right]
	\\
	&=\left(\frac{1}{i\theta}\right)^{2m}\!\!\!\int_{0}^{+\infty} \!\!\!\! e^{i\rho^{\theta}y} \rho^{r+2\theta-2n\gamma+2m(1-\theta)}\left[t^{2n}
	\mathcal{P}(t,\rho,\partial_{t},\partial_{\rho})u(t,\rho)\right]_{|_{t=\rho^{\gamma}x}} \!\!\!d\rho,
	\end{align*}
	where
	\begin{align*}
	\mathcal{P}(t,\rho,\partial_{t}, \partial_{\rho})=
	\partial_{\rho}^{2m} &+ \frac{2m}{\rho}\!\left(\frac{2m+1}{2}-\theta(m-1)+r+ \gamma t\partial_{t}\right)\!\partial_{\rho}^{2m-1}\\
	&+\sum_{i=2}^{2m}\frac{1}{\rho^{i}}\left(\mathscr{P}_{i}^{1}(t\partial_{t})-m\theta\mathscr{P}_{i-1}^{2}(t\partial_{t})\right)\partial_{\rho}^{2m-i},
	\end{align*}
	here
	\begin{align*}
	\mathscr{P}_{i}^{1}(t\partial_{t})=\sum_{j=0}^{i} \text{\textcursive{p}}_{i,j}^{1}(t\partial_{t})^{j}
	\text{ and }  \mathscr{P}_{i-1}^{2}(t\partial_{t})=\sum_{j=0}^{i-1} \text{\textcursive{ p}}_{i-1,j}^{2}(t\partial_{t})^{j}.
	\end{align*}
	The coefficients $\text{\textcursive{ p}}_{i,j}^{1}$ and $\text{\textcursive{ p}}_{i,j}^{2}$ are obtained by the formulas
	(\ref{Coef_1}) and (\ref{Coef_2}), Lemma \ref{Tech_L3}, setting $p=2m$, $q=r+2\theta-2n\gamma$, $f=2n$ and  $p=2m-1$, $q=r+\theta-2n\gamma$, $f=2n$ respectively.
\end{enumerate}
Choosing $\theta=2m(2m-1)^{-1}$ and $\gamma=m(n+1)^{-1}(2m-1)^{-1}$ we obtain
\begin{align}\label{Act_M-K}
& M_{n,m}\mathscr{K}[u]= \left(
                               D_{x}^{2}+\left(x^{2n+1}D_{y}\right)^{2}+\left(x^{n}y^{m}D_{y}\right)^{2}
                               \right) \mathscr{K}\!\left[u\right](x,y)
\\
\nonumber
&=\int_{0}^{+\infty} \!\! e^{iy\rho^{\frac{2m}{2m-1}}} \rho^{r+\frac{2m}{(n+1)(2m-1)}}\left[\sum_{i=0}^{2m}\frac{1}{\rho^{i}}\mathcal{P}_{i}(t,\partial_{t},\partial_{\rho})u(t,\rho)\right]_{\vline_{t=\rho^{\frac{m}{(n+1)(2m-1)}}x}} \!\!\!\!\!\!\!\! d\rho,
\end{align}
where
\begin{enumerate}
	\item[i)] $\mathcal{P}_{0}(t,\partial_{t},\partial_{\rho})=-\partial_{t}^{2}+t^{2(2n+1)}+\left(\frac{2m-1}{2im}\right)^{2m}t^{2n}\partial_{\rho}^{2m}$;
	\item[ii)] $\mathcal{P}_{1}(t,\partial_{t},\partial_{\rho})= t^{2n}\left(\frac{2m-1}{2im}\right)^{2m} \left( \frac{(4m-1)2m}{2m-1}+2mr+\frac{2m^{2}}{(n+1)(2m-1)}t\partial_{t}\right)\partial_{\rho}^{2m-1}$;
	\item[iii)] $\mathcal{P}_{i}(t,\partial_{t},\partial_{\rho})= t^{2n}\left(\frac{2m-1}{2im}\right)^{2m}\mathscr{P}_{i}(t\partial_{t})\partial_{\rho}^{2m-i}$,
	$i= 2, 3,\dots,2m$, where
	$\mathscr{P}_{i}(t\partial_{t})=\sum_{j=0}^{i}\text{\textcursive{ p}}_{i,j}\left(t\partial_{t}\right)^{j}$;
	the coefficients $\text{\textcursive{ p}}_{i,j}$ are obtained from the previous formulas replacing $\theta$ and $\gamma$ with the above assigned values.
\end{enumerate}
We have to solve the following equation
\begin{align}
  \label{Pu=0}
\sum_{i=0}^{2m}\frac{1}{\rho^{i}}\mathcal{P}_{i}(t,\partial_{t},\partial_{\rho})u(t,\rho)=0.
\end{align}
We do this formally. We set
\begin{align*}
u(t,\rho)= \sum_{\ell \geq 0} u_{\ell}(t,\rho),
\end{align*}
with the purpose to obtain the functions $u_{\ell}(t,\rho)$ recursively taking advantage of
the eigenvalue problem, (\ref{Eig_P}), studied in the Appendix. More
precisely we want to express the functions $u_{\ell}(t,\rho)$
in the following form:
\begin{align*}
u_{\ell}(t,\rho)= \sum_{p=0}^{\ell} g_{\ell,p}(\rho)v_{p}(t),
\end{align*}
where $v_{p}(t)$ are the eigenfunctions given by (\ref{Ev_Eigf}) in
the Appendix.

We remark that we allow the above sum for $ u_{\ell} $ to be finite
because of the relation (\ref{Rec_Lag_1}), Lemma \ref{Rk-Ric-Rel}.
We point out that the relation (\ref{Rec_Lag_1}), and more generally the relation (\ref{Rec_Lag_2}), allow us to construct a suitable system
in order to obtain recursively the functions $u_{\ell}(t,\rho)$. We stress that $\mathcal{P}_{i}u_{\ell-i} $ can be expressed by a linear combination
of $v_{0}(t), \dots, v_{\ell}(t)$. This gives us the possibility, at
any step of the process, to reduce the problem to solving a system of
ordinary differential equations.

We choose
\begin{align}
  \label{u0}
u_{0}(t,\rho) = g_{0,0}(\rho)v_{0}(t)= e^{-c_{1}\rho}v_{0}(t),
\end{align}
where $c_{1}=\frac{2m}{2m-1}(2n+1)^{1/2m}\left(\sin\left(\frac{\pi}{2m}\right)-i \cos\left(\frac{\pi}{2m}\right)\right)$.
We have
\begin{align}\label{Eq_0}
\mathcal{P}_{0}(t,\partial_{t},\partial_{\rho})u_{0}(t,\rho)=0
\end{align}
We set $c_{0}= \Re(c_{1})$.

Before constructing our formal solution we point out that, in order to have the desired growth of the functions $g_{\ell,p}(\rho)$,
since they have exponential nature, the derivative with respect to the parameter $\rho$ does not help to push down the order with respect to negative power of $\rho$.
The only instrument that allow to do it, it is the multiplication by
negative powers of $\rho$. In particular the growth of $
g_{\ell,0}(\rho)$ will be the most thorny; the precise choice of the
parameter $r$ plays a fundamental role in this situation.

Let us start with the action of the operators $\mathcal{P}_{i}(t,\partial_{t},\partial_{\rho})$ on $u_{\ell}(t,\rho)$:
\begin{enumerate}
	\item[i)] case of $\mathcal{P}_{0}$; for every $\ell \geq 1$ we have
          \begin{align*}
        \mathcal{P}_{0} u_{\ell} 
	&= \left(-\partial_{t}^{2} + t^{2(2n+2)} + \left(\frac{2m-1}{2mi}\right)^{2m} t^{2n} \partial_{\rho}^{2m}\right)
	\left(\sum_{p=0}^{\ell} g_{\ell,p}(\rho) v_{p}(t)\right)\\
	&= t^{2n} \left(\frac{2m-1}{2mi}\right)^{2m} \sum_{p=0}^{\ell} \left( \partial_{\rho}^{2m} + \left(\frac{2mi}{2m-1}\right)^{2m} E_{p}\right)g_{\ell,p}(\rho) v_{p}(t)\\
	& \doteq t^{2n} \left(\frac{2m-1}{2mi}\right)^{2m} \sum_{p=0}^{\ell}\Theta_{p} g_{\ell,p}(\rho) v_{p}(t);
	\end{align*}
        here $ \Theta_{p} $ is defined in \eqref{ODE}.
	\item [ii)] case of  $\mathcal{P}_{1}$; for every $\ell \geq 2$ we have
          \begin{align*}
            \mathcal{P}_{1}u_{\ell - 1} 
	& = t^{2n} \left(\frac{2m-1}{2mi}\right)^{2m} \left(  \text{\textcursive{ p}}_{1,0} + \text{\textcursive{ p}}_{1,1} t\partial_{t} \right)\partial_{\rho}^{2m-1}
	\left(\sum_{p=0}^{\ell-1} g_{\ell,p}(\rho) v_{p}(t)\right)\\
	\nonumber
	&=t^{2n} \left(\frac{2m-1}{2mi}\right)^{2m} \sum_{p=0}^{\ell} g_{\ell,p,1}(\rho)v_{p}(t),
	\end{align*}
	where
	\begin{align}\label{A_P-1_0}
	g_{\ell,0,1}(\rho) = ( \text{\textcursive{ p}}_{1,0} + \text{\textcursive{ p}}_{1,1} \delta_{0}^{0,1}) g_{\ell-1, 0}^{(2m-1)}(\rho)
	+ \text{\textcursive{ p}}_{1,1} \delta_{0}^{1,1} g_{\ell-1,1}^{(2m-1)}(\rho), \qquad\quad
	\end{align}
	\begin{align}\label{A_P-1-p}
	g_{\ell,p,1}(\rho) =  
	&\text{\textcursive{p}}_{1,1}\delta_{p}^{p-1,1} g_{\ell-1,p-1}^{(2m-1)}(\rho)
	+ g_{\ell-1,p}^{(2m-1)}(\rho)( \text{\textcursive{p}}_{1,0}\!\! +\! \text{\textcursive{p}}_{1,1}  \delta_{p}^{p,1}) g_{\ell-1,p}^{(2m-1)}(\rho)
	\\ 
	\nonumber
	& + \text{\textcursive{p}}_{1,1}\delta_{p}^{p+1,1} g_{\ell-1,p-1}^{(2m-1)}(\rho), \qquad p=1,2,\dots \ell-2,
	\end{align}
	\begin{align}\label{A-P-1-last-1}
	 g_{\ell,\ell-1,1}(\rho) =  \text{\textcursive{ p}}_{1,1}\delta_{\ell-1}^{\ell-1,1} g_{\ell-1,\ell-2}^{(2m-1)}(\rho)+
	( \text{\textcursive{ p}}_{1,0} + \text{\textcursive{ p}}_{1,1}  \delta_{\ell-1}^{\ell-1,1}) g_{\ell-1,\ell-1}^{(2m-1)}(\rho),
	\end{align}
	\begin{align}\label{A-P-1-last}
	g_{\ell,\ell,1}(\rho) = \text{\textcursive{ p}}_{1,1}\delta_{\ell}^{\ell-1,1} g_{\ell-1,\ell-1}^{(2m-1)}(\rho);\hspace{17em}
	\end{align}
    the symbols $\delta_{\nu}^{j,p}$ are defined in (\ref{Rec_Lag_2}),
    Lemma \ref{Rk-Ric-Rel}, in the Appendix. In the case $\ell = 1$ we have
    \begin{align*}
      \mathcal{P}_{1} u_{0}
	& = t^{2n}\! \left(\frac{2m-1}{2mi}\right)^{2m}\!
	\left[\left(\text{\textcursive{p}}_{1,0}\!\!  + \!\! \text{\textcursive{ p}}_{1,1} \delta_{0}^{0,1}\right) g_{0,0}^{(2m-1)}(\rho)v_{0}(t)
\right.
      \\
        &\phantom{=}
\left.   +  \text{\textcursive{ p}}_{1,1}\! \delta_{1}^{0,1}  g_{0,0}^{(2m-1)}(\rho)v_{1}(t)\right].
	\end{align*}
	Since the term $\text{\textcursive{ p}}_{1,0} $ is linear with
        respect to $r$ with a non zero coefficient,
        we may make a suitable choice of the parameter $r$ cancelling
        the coefficient of $ v_{0} $ in the above expression.
	\item[iii)] case of $\mathcal{P}_{i}$, $i= 2,3,\dots, 2m$; we have
          \begin{multline*}
        \mathcal{P}_{i} u_{\ell-i}  
        =
        t^{2n} \left(\frac{2m-1}{2mi}\right)^{2m}\mathscr{P}_{i}(t\partial_{t})\partial_{\rho}^{2m-i} \left(\sum_{p=0}^{\ell-i} g_{\ell-i,p}(\rho) v_{p}(t)\right)
\\
        = t^{2n} \left(\frac{2m-1}{2mi}\right)^{2m} \sum_{p=0}^{\ell} g_{\ell,p,i}(\rho) v_{p},
	\end{multline*}
	where
	\begin{align}\label{A-P_i}
	g_{\ell,p,i}(\rho) = \sum_{\nu = \max\lbrace p-i, 0\rbrace}^{\min\lbrace p+i, \ell -i\rbrace } g_{\ell-i,\nu}^{(2m-i)}(\rho)
	\left(\sum_{j=|p-\nu|}^{i}\text{\textcursive{ p}}_{i,j} \delta_{p-\nu}^{\nu,j} \right).
	\end{align}
    The symbols $\delta_{\nu}^{j,p}$ are defined in (\ref{Rec_Lag_2}),
    Lemma \ref{Rk-Ric-Rel} in the Appendix, and we have set
    $\delta_{p}^{p,0} = 1$. 
\end{enumerate}
We introduce the operator $\Pi_{0}$ and its ``orthogonal''
$(1-\Pi_{0})$ acting on the functions $u_{\ell}$ in the following way
$$
\Pi_{0}u_{\ell}= g_{0,\ell}(\rho) v_{0}(t) \quad \text{ and } \quad (1-\Pi_{0})u_{\ell} =
\sum_{p=1}^{\ell} g_{\ell,p}(\rho) v_{p}.
$$
As consequence of the choice of the parameter $r$ we have that
$$
\Pi_{0}\mathcal{P}_{1}\Pi_{0}u_{0}= \Pi_{0}\mathcal{P}_{1} u_{0} =  0 .
$$
Moreover
$$
\Pi_{0}\mathcal{P}_{1}\Pi_{0}u_{\ell}=0 \quad \text{ for every } \ell.
$$
This is crucial in order to obtain the right growth
of the functions $g_{\ell,0}(\rho)$ with respect to (negative) powers
of $\rho$ (see Lemma \ref{L-0_Est} in the Appendix).

Following the idea in \cite{M81}, to obtain the $u_{\ell}$, we
consider the system 
\begin{align}
  \label{transp}
\begin{cases}
\left(1-\Pi_{0}\right)\mathcal{P}_{0}u_{\ell} = -\left(1-\Pi_{0}\right)\displaystyle\sum_{i=1}^{\min\lbrace\ell, 2m\rbrace}\frac{1}{\rho^{i}}\mathcal{P}_{i}u_{\ell-i}; \\
\Pi_{0}\mathcal{P}_{0}u_{\ell} = -\displaystyle\frac{1}{\rho}\Pi_{0}\mathcal{P}_{1} \left(1-\Pi_{0}\right)u_{\ell} - \Pi_{0}\displaystyle\sum_{i=2}^{\min\{\ell+1,2m\}}\frac{1}{\rho^{i}}\mathcal{P}_{i}u_{\ell+1-i} .
\end{cases}
\end{align}
This allows us, at any step of the process, to reduce the problem of
computing the $ u_{\ell} $ to that of solving a system of $\ell+1$
ordinary differential equations yielding the functions $g_{\ell,p}(\rho)$,
$p = 0,\dots \ell$.

With the purpose of understanding how to construct and subsequently estimate the functions $g_{\ell,p}(\rho)$, we begin to analyze the case $\ell=1$.

For $ \ell = 1 $ system \eqref{transp} in terms of the $ g_{1, p} $, $
p = 0, 1 $, becomes
\begin{align*}
\begin{cases}
\Theta_{1} g_{1,1}(\rho) = -\displaystyle\frac{1}{\rho}
\text{\textcursive{p}}_{1,1}\delta_{1}^{0,1} g_{0,0}^{(2m-1)}(\rho);
\\[16pt]
\Theta_{0}g_{1,0} (\rho)= -\displaystyle\frac{1}{\rho}\text{\textcursive{p}}_{1,1}\delta^{1,1}_{0}g_{1,1}^{(2m-1)}(\rho)\!
- \displaystyle\frac{1}{\rho^{2}} 
\left(\text{\textcursive{p}}_{2,0} \delta_{0}^{0,0}+ \text{\textcursive{p}}_{2,1}\delta^{0,1}_{0}+\text{\textcursive{p}}_{2,2}\delta^{0,2}_{0}\right) 
g_{0,0}^{(2m-2)}(\rho),
\end{cases}
\end{align*}
where $ g_{0,0}(\rho) = e^{- c_{1} \rho} $.

We stress the fact that the choice of the parameter $r$ has allowed us
to make the right hand side of the second equation of order $-2$ with
respect to the variable  $\rho$.

We denote by $f_{1,j}(\rho)$, $j=1,0$, the functions on the right hand
side of the above system.

In order to be able to apply Lemmas \ref{L-j_Est}, \ref{L-0_Est}, we
need to make sure that the variable $ \sigma $ in those Lemmas belongs
to the half line $ [C_{0}(j+1), +\infty[ $. To accomplish this, we
use cutoff functions $ \chi_{\ell} $ so that the hypotheses of those
Lemmas are satisfied.

We define a family of smooth functions $\lbrace
\chi_{\ell}(\rho)\rbrace_{\ell \geq 0}$ such that

% We remark however that property \eqref{ii} is not used at
% this level. On the contrary it is used when the cutoff functions $
% \chi_{\ell} $ in the definition of $ \tilde{u} $ will get derived.

%
\begin{enumerate}
	\item[i)] $\chi_{\ell}(\rho)$ is identically zero for $\rho <
          2 R_{1} (\ell+1)$ and identically one for $\rho >
          4 R_{1}(\ell +1)$, where $ R_{1}$ denotes a suitable
          positive constant; 
	\item[ii)]there is a constant $C_{\chi}$, independent of $\rho $
          and $\ell $, such that 
          \begin{align}
            \label{ii}
	|\chi_{\ell}^{(k)}(\rho)| \leq C_{\chi} \qquad \forall k\leq 2m.
	\end{align}
\end{enumerate}
%
% We remark that $\chi_{\ell-j}(\rho)$, for $ 1 \leq j \leq 2m $, is
% supported in the region $\rho \geq R_{1}(\ell+1)$.
%
%
We define the functions $g_{1,p}(\rho)$, $p=0,1$, as
\begin{align*}
g_{1,1}(\rho)= \left(G_{1}\ast (\chi_{1} f_{1})\right)(\rho),
\end{align*}
and
\begin{align*}
g_{1,0}(\rho)=  \left( G_{0}\ast (\chi_{0} f_{0}) \right) (\rho)- h_{0}(\rho)-h_{m-1}(\rho),
\end{align*}
where $G_{0}$ and $G_{1}$ are the fundamental solutions of $\Theta_{0}$ and $\Theta_{1}$, see Lemma \ref{L_Fund_Sol}, and
the functions  $h_{0}(\rho)$ and $h_{m-1}(\rho)$ are the solutions of
the linear homogeneous equation $\Theta_{0} h=0$ defined in Lemma
\ref{L-0_Est}.

By Lemma \ref{L-j_Est}, see also Remark \ref{Rk_L-j_Est}, and Lemma
\ref{L-0_Est}, see also (\ref{Der-Fun-odd}), \eqref{Der-Fun-even} in
Lemma \ref{L_Fund_Sol}, we obtain 
\begin{align*}
|g_{1,p}(\rho)| \leq C^{2}  \frac{1}{\rho} e^{-c_{0}\rho}, \qquad
  p= 0, 1,
\end{align*}
for $\rho \geq R_{1} $, where $C$ is a positive constant depending on
$m$ and $n$.

\medskip

Let us now consider the case $ \ell > 1 $. To construct the functions $g_{\ell,p}(\rho)$ and consequently the functions $u_{\ell}(t,\rho)$,
we proceed recursively adopting the technique described above. 

% In order to obtain also the growth of the functions
% $g_{\ell,p}(\rho)$, for $\rho$ sufficiently large, we consider $\ell$ large.

We have to solve the following system of $\ell +1$ equations
\begin{align}
  \begin{cases}
    \label{system-ell}
\Theta_{0}g_{\ell,0}(\rho) = f_{\ell,0} (\rho);  \\
\Theta_{1} g_{\ell,1}(\rho) =f_{\ell,1}(\rho);\\
\vdots\\
\Theta_{\ell}g_{\ell,\ell}(\rho) = f_{\ell,\ell}(\rho).
\end{cases}
\end{align}
Where due to (\ref{A_P-1_0}), (\ref{A_P-1-p}), (\ref{A-P-1-last-1}), (\ref{A-P-1-last}) and (\ref{A-P_i}) we have
\begin{align}
  \label{flp}
f_{\ell,p}(\rho)= -\sum_{i=1}^{2m} \frac{1}{\rho^{i}}g_{\ell,p,i}(\rho), \quad p=1,\dots, \, \ell,
\end{align}
or, more explicitly, for $ p=1,\dots, \, \ell, $
$$ 
f_{\ell,p}(\rho) = \sum_{i=1}^{\min\{\ell, 2m\}} \frac{1}{\rho^{i}}
\left(\mathcal{P}_{i} u_{\ell - i} \right)_{p} =
\sum_{i=1}^{\min\{\ell, 2m\}} \frac{1}{\rho^{i}} 
\sum_{j=0}^{i} \sum_{p_{1}=0}^{\ell-i} \text{\textcursive{ p}}_{i,j}
\delta^{p, j}_{p-p_{1}} g^{(2m-i)}_{\ell-i, p_{1}} ,
$$
where $ \left(\mathcal{P}_{i} u_{\ell - i} \right)_{p} $ denotes the
coefficient of $ v_{p} $ in the expression of $ \mathcal{P}_{i}
u_{\ell - i}  $.

Moreover
\begin{align}
  \label{fl0}
f_{\ell,0}(\rho)=- \frac{1}{\rho}g_{\ell,1}^{(2m-1)}(\rho)\text{\textcursive{p}}_{1,1} \delta_{0}^{1,1}
-\sum_{i=2}^{\min\{\ell, 2m\}}\sum_{\nu = 0}^{\min\lbrace i, \ell -i+1\rbrace } \frac{1}{\rho^{i}} g_{\ell-i+1,\nu}^{(2m-i)}(\rho)
\left(\sum_{j=\nu}^{i}\text{\textcursive{ p}}_{i,j} \delta_{-\nu}^{\nu,j} \right).
\end{align}
To comply with the hypotheses of Lemmas \ref{L-j_Est}, \ref{L-0_Est},
we solve instead the system
\begin{align}
  \begin{cases}
    \label{system-ell-cutoff}
\Theta_{0}g_{\ell,0}(\rho) = f_{\ell,0} (\rho) \chi_{\ell} ;  \\
\Theta_{1} g_{\ell,1}(\rho) =f_{\ell,1}(\rho) \chi_{\ell}  ;  \\
\vdots\\
\Theta_{\ell}g_{\ell,\ell}(\rho) = f_{\ell,\ell}(\rho) \chi_{\ell} .
\end{cases}
\end{align}
We remark that all the sums involved in the above formulas, see
(\ref{A-P_i}), are finite and they have at most $4m $ terms.
We also point out that, due to the recurrence relation described in
Lemma \ref{Rk-Ric-Rel} in the Appendix, 
we have that $|\delta_{p}^{\nu,j}| \leq
C^{j}\frac{(\nu+j)!}{\nu!}$. The function $f_{\ell,0}(\rho)$ has order
$-\ell-1$ with respect to $\rho$.

The system \eqref{system-ell-cutoff} is then solved by
\begin{equation}
\label{syst-sol}
\begin{cases}
g_{\ell, p}(\rho) = G_{p} * (\chi_{\ell} f_{\ell, p})(\rho), \quad p =
1,\ldots, \ell \\[5pt]
g_{\ell, 0}(\rho) = G_{0} * (\chi_{\ell} f_{\ell, 0})(\rho) -
h_{0}(\rho) - h_{m-1}(\rho) ,
\end{cases}
\end{equation}
where the functions  $h_{0}(\rho)$ and $h_{m-1}(\rho)$ are the solutions ofthe linear homogeneous equation $\Theta_{0} h=0$ defined in Lemma
\ref{L-0_Est}

The next lemma gives the estimates of the derivatives of the
coefficients $ g_{\ell, p} $.
\begin{lemma}
\label{lemma:glp}
With the above notations we have that
\begin{align}\label{Est_g-lp-1}
|g_{\ell,p} (\rho)| \leq C^{\ell+1} \frac{(\ell+ 1)^{\ell
  \left(1-\frac{1}{2m}\right)}}{\rho^{\ell}} e^{-c_{0}\rho},\qquad
  0\leq p \leq \ell ,
\end{align} 
and more generally,
\begin{align}\label{Est-g-lp-2}
|g_{\ell,p}^{(k)} (\rho)| \leq C^{\ell+1+(\frac{k}{2m} - 1)_{+}} \frac{(\ell+ 1)^{\ell
  \left(1-\frac{1}{2m}\right) + \frac{k}{2m}}}{\rho^{\ell}}
  e^{-c_{0}\rho},\qquad 0\leq p \leq \ell ,
\end{align} 
with $ k \in \mathbb{Z}_{+} $. Here $ x_{+} = x $ if $ x \geq 0 $ and
$ x_{+} = 0 $ if $ x < 0 $. 
\end{lemma}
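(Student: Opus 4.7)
The plan is to prove the lemma by strong induction on $\ell$. The base case $\ell=0$, where $g_{0,0}(\rho)=e^{-c_{1}\rho}$ and both bounds hold by direct differentiation, is immediate; the case $\ell=1$ has already been verified in the discussion preceding the statement. For the inductive step, I assume that \eqref{Est_g-lp-1} and \eqref{Est-g-lp-2} hold for every $\ell'<\ell$, and then analyze the explicit formulas in \eqref{syst-sol} separately for $p\ge 1$ and for $p=0$.

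For $p\ge 1$, the forcing term $f_{\ell,p}$ in \eqref{flp} is a linear combination of contributions of the form $\rho^{-i}\,g^{(2m-i)}_{\ell-i,p_{1}}(\rho)\,\text{\textcursive{p}}_{i,j}\,\delta^{p_{1},j}_{p-p_{1}}$ with $1\le i\le\min(\ell,2m)$. Applying \eqref{Est-g-lp-2} with $k=2m-i\le 2m$, so that $\bigl(\tfrac{k}{2m}-1\bigr)_{+}=0$ and the exponent of $(\ell-i+1)$ simplifies via
\[
(\ell-i)\Bigl(1-\tfrac{1}{2m}\Bigr)+\tfrac{2m-i}{2m}=\ell-i+1-\tfrac{\ell}{2m},
\]
combined with the bounds $|\text{\textcursive{p}}_{i,j}|\le C^{i}$ and $|\delta^{p_{1},j}_{p-p_{1}}|\le C^{j}(\ell+1)^{j}$ from Lemma \ref{Rk-Ric-Rel}, and the fact that the sums contain at most $4m$ terms, gives a bound on $\chi_{\ell}f_{\ell,p}$ of the appropriate form. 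Because the cutoff $\chi_{\ell}$ is supported in $\rho\ge 2R_{1}(\ell+1)$, the hypothesis of Lemma \ref{L-j_Est} is satisfied, and the convolution $G_{p}*(\chi_{\ell}f_{\ell,p})$ then yields \eqref{Est_g-lp-1} for the $g_{\ell,p}$ with $p\ge 1$.

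For $p=0$, the parameter $r$ was chosen precisely so that $\Pi_{0}\mathcal{P}_{1}\Pi_{0}u_{\ell}\equiv 0$; hence the expression \eqref{fl0} for $f_{\ell,0}$ starts at order $\rho^{-2}$ instead of $\rho^{-1}$, giving the extra $\rho^{-1}$ decay that is required to meet the weaker gain of Lemma \ref{L-0_Est}. Substituting the inductive bounds on the $g^{(2m-i)}_{\ell-i+1,\nu}$ into \eqref{fl0}, applying Lemma \ref{L-0_Est}, and choosing the homogeneous terms $h_{0}$ and $h_{m-1}$ from Lemma \ref{L-0_Est} to cancel the non-decaying asymptotic contributions of $G_{0}*(\chi_{\ell}f_{\ell,0})$ produces \eqref{Est_g-lp-1} for $g_{\ell,0}$. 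The derivative estimate \eqref{Est-g-lp-2} is then obtained by differentiating the formulas \eqref{syst-sol} $k$ times and invoking the derivative bounds \eqref{Der-Fun-odd}, \eqref{Der-Fun-even} of Lemma \ref{L_Fund_Sol} for $G_{p}$, which contribute exactly the factor $(\ell+1)^{k/(2m)}$ and the constant $C^{(k/(2m)-1)_{+}}$.

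The main obstacle is the delicate accounting of polynomial-in-$\ell$ factors. The inductive hypothesis for $g^{(2m-i)}_{\ell-i,p_{1}}$ supplies $(\ell-i+1)^{\ell-i+1-\ell/(2m)}$, the commutator-type coefficients $\delta^{p_{1},j}_{p-p_{1}}$ can add up to $(\ell+1)^{i-1}$ once all indices $j\le i$ are summed, and the convolution with $G_{p}$ must then remove a further single power of $(\ell+1)$ to land on the target exponent $\ell(1-1/(2m))$. Making these three contributions combine \emph{exactly} and not produce a larger power is the real content of the argument; a secondary subtlety is verifying in the $p=0$ case that the extra decay generated by the choice of $r$ interlocks correctly with the structurally different Lemma \ref{L-0_Est} and the homogeneous corrections $h_{0},h_{m-1}$, so that the same exponent $\ell(1-1/(2m))$ is reached and no loss is introduced when propagating the estimate to the next step.
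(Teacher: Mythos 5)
Your proposal follows essentially the same route as the paper: strong induction on $\ell$ with the derivative estimate \eqref{Est-g-lp-2} carried along as the actual inductive hypothesis, the explicit convolution representation $g_{\ell,p}^{(k)}=G_p^{(k)}\ast(\chi_\ell f_{\ell,p})$, the coefficient bounds from Lemma \ref{Rk-Ric-Rel}, and Lemmas \ref{L-j_Est}, \ref{L-0_Est} to handle the convolution. The exponent simplification $(\ell-i)(1-\tfrac{1}{2m})+\tfrac{2m-i}{2m}=(\ell-i+1)-\tfrac{\ell}{2m}$ is correct, and your explicit separation of the $p=0$ case (extra $\rho^{-1}$ decay from the choice of $r$, Lemma \ref{L-0_Est}, subtraction of $h_0,h_{m-1}$) is in fact spelled out more carefully than in the paper itself, whose displayed convolution formula is only written down for $p\ge 1$.

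That said, the part you flag as the ``real content'' is left unverified, and your sketch of it contains two imprecisions. First, the contribution of $G_p^{(k)}$ together with the convolution is not $(\ell+1)^{k/(2m)}$: the kernel bound \eqref{Der-Fun-odd}--\eqref{Der-Fun-even} gives $|G_p^{(k)}|\lesssim c_p^{-(2m-1-k)}e^{-c_p\sin(\pi/(2m))|\cdot|}\sim E_p^{\frac{k+1}{2m}-1}e^{-c_p\sin(\pi/(2m))|\cdot|}$, and Remark \ref{Rk_L-j_Est} supplies an additional $(p+1)^{-1/(2m)}$, so the net factor is $(p+1)^{\frac{k}{2m}-1}$, a $p$-dependent quantity, not an $(\ell+1)$-power; the passage to $(\ell+1)$-powers is exactly where the paper introduces $C_E$ and uses $E_p^\theta\le C_E^{\theta_+}(p+1)^\theta$. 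Second, the $\delta$-coefficients contribute up to $(\ell+1)^{i}$ (from $|\delta^{\nu,j}_{p-\nu}|\le C^j(\nu+j)!/\nu!$ with $\nu+j\le\ell$ and $j\le i$), not $(\ell+1)^{i-1}$. Consequently the crude product is $(\ell+1)^{\ell(1-1/(2m))+1}\cdot(p+1)^{k/(2m)-1}$, carrying an excess $(\ell+1)^{1}$, and the verification that this excess is absorbed to reach the target exponent $\ell(1-\tfrac{1}{2m})+\tfrac{k}{2m}$ (for $p\ge 1$ via the $(p+1)$-dependent gain, and for $p=0$ via the extra $(\ell+1)^{-1}$ gain in Lemma \ref{L-0_Est}) is precisely the step you have not carried out and which constitutes the technical heart of the paper's proof.
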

\begin{proof}
We apply Lemmas \ref{L-j_Est}, \ref{L-0_Est} as well as Remark
\ref{Rk_L-j_Est}. We have
\begin{align}
  \label{glp}
	g_{\ell,p}^{(k)}(\rho)=G_{p}^{(k)}\ast \left(-   \chi_{\ell}(\sigma)
  \sum_{i=1}^{\min\lbrace\ell, 2m\rbrace} \frac{1}{\sigma^{i}}
  g_{\ell,p,i}(\sigma) \right)(\rho), 
	\quad \text{ for } p\geq 1,
	\end{align}
	where 
	\begin{align}
          \label{glpi}
	g_{\ell,p,i}(\sigma)= \sum_{\nu = \max\lbrace p-i,
          0\rbrace}^{\min\lbrace p+i, \ell -i\rbrace }  g_{\ell-i,\nu}^{(2m-i)}(\sigma)
	\left(\sum_{j=|p-\nu|}^{i}\text{\textcursive{ p}}_{i,j}
          \delta_{p-\nu}^{\nu,j} \right) .
	\end{align}
We proceed by induction on $ \ell $.
Assume that for every $ \ell' < \ell $, for every $ k \in
\mathbb{Z}_{+} $ and for every $ 0 \leq p \leq \ell'  $, we have
$$
|g^{(k)}_{\ell',p}(\rho)| \leq C^{\ell'+1+(\frac{k}{2m} - 1)_{+}}
\left(\ell' +1\right)^{\ell'(1-1/2m) + \frac{k}{2m}} \rho^{-\ell'} e^{-c_{0}\rho} .
$$
Let us estimate first the quantity in \eqref{glpi}. We have
\begin{multline} 
| g_{\ell,p,i}(\sigma) | \leq \sum_{\nu = \max\lbrace p-i,
          0\rbrace}^{\min\lbrace p+i, \ell -i\rbrace }  C^{\ell - i+1}
\left(\ell-i +1\right)^{(\ell-i)(1-1/2m) + \frac{2m-i}{2m}}
\sigma^{-(\ell-i)} e^{-c_{0}\sigma}
\\
\cdot
C_{\text{\textcursive{p}}}  \sum_{j=|p-\nu|}^{i} C_{1}^{j}
\nu^{j} .
\end{multline}
Hence
\begin{multline*}
| g_{\ell, p}^{(k)}(\rho) | \leq C_{\text{\textcursive{p}}} \sum_{i=1}^{\min\lbrace\ell,
  2m\rbrace} \sum_{\nu = \max\lbrace p-i,
  0\rbrace}^{\min\lbrace p+i, \ell -i\rbrace }
\sum_{j=|p-\nu|}^{i}
C_{1}^{j} \nu^{j}
\\
C^{\ell - i+1}
\left(\ell-i +1\right)^{(\ell-i)(1-1/2m) + \frac{2m-i}{2m}}
\int_{R}^{+\infty} | G_{p}^{(k)}(\rho -
          \sigma) | \frac{1}{\sigma^{\ell}} e^{- c_{0} \sigma}
          d\sigma
\\
\leq
C_{\text{\textcursive{p}}} C_{G} \sum_{i=1}^{\min\lbrace\ell,
  2m\rbrace} \sum_{\nu = \max\lbrace p-i,
  0\rbrace}^{\min\lbrace p+i, \ell -i\rbrace }
\sum_{j=|p-\nu|}^{i} C_{1}^{j} \nu^{j}
C^{\ell - i+1}
\left(\ell-i +1\right)^{(\ell-i)(1-1/2m) + \frac{2m-i}{2m}}
\\
\cdot
\left(\frac{2m-1}{2m}\right)^{2m-1-k} E_{p}^{\frac{k+1}{2m} - 1}
\int_{R}^{+\infty} e^{-c_{p} | \rho - \sigma|}
  \frac{1}{\sigma^{\ell}} e^{- c_{0} \sigma}
          d\sigma .
\end{multline*}
Let us now use Lemma \ref{L-j_Est} as well as the fact that the
indices $ j $, $ \nu $, $ i $ run over a finite number of values.

We point out that there are two positive constants $ 1 < C_{-} < C_{+} $
such that
$$
C_{-} (p+1) \leq E_{p} \leq C_{+} (p+1) .
$$
As a consequence $ E_{p}^{\theta} \leq C_{+}^{\theta} (p+1)^{\theta} $ if $
\theta \geq 0 $, whereas $ E_{p}^{\theta} \leq C_{-}^{\theta}
(p+1)^{\theta} \leq (p+1)^{\theta} $ if $ \theta < 0 $. Setting $ C_{E} =
C_{+}  $ we have that $ E_{p}^{\theta} \leq
C_{E}^{\theta_{+}} (p+1)^{\theta} $, where $ \theta_{+} = \theta $ for
$ \theta \geq 0 $, $ \theta_{+} = 0 $ for $ \theta < 0 $. We
note that $ C_{E} $ depends only on the problem data.

We then obtain
\begin{multline*}
| g_{\ell, p}^{(k)}(\rho) | \leq
C_{2} \frac{e^{- c_{0} \rho}}{\rho^{\ell}}  \sum_{i=1}^{\min\lbrace\ell,
  2m\rbrace} \sum_{\nu = \max\lbrace p-i,
  0\rbrace}^{\min\lbrace p+i, \ell -i\rbrace }
\left(\sum_{j=0}^{i} C_{1}^{j} \right) C^{\ell - i+1}
\\
\cdot
(\ell - i + 1)^{(\ell - i) (1 - \frac{1}{2m}) + \frac{2m - i}{2m} + i
}
\ C_{E}^{(\frac{k}{2m} -1)_{+}} (p+1)^{\frac{k}{2m} - 1}
\\
\leq
C^{\ell+1} (\ell+1)^{\ell(1 - \frac{1}{2m}) + \frac{k}{2m}} \frac{e^{-
    c_{0} \rho}}{\rho^{\ell}}
C_{2} C_{3} C_{E}^{(\frac{k}{2m} -1)_{+}} \sum_{i=1}^{\min\lbrace\ell,
  2m\rbrace} C^{-i}
\\
\leq
C^{\ell+1+(\frac{k}{2m} - 1)_{+}} (\ell+1)^{\ell(1 - \frac{1}{2m}) + \frac{k}{2m}} \frac{e^{-
    c_{0} \rho}}{\rho^{\ell}} ,
\end{multline*}
provided $ C $ is chosen suitably large.

\end{proof}

So far we proved the following
\begin{proposition}
\label{formal}
There are functions $ u_{\ell} $, $ \ell = 0, 1, \ldots $, such that
\begin{equation}
  \label{uell}
u_{\ell}(t, \rho) = \sum_{p=0}^{\ell} g_{\ell, p}(\rho) v_{p}(t) ,
\end{equation}
where the $ v_{p} $ are the eigenfunctions of the operator in
\eqref{Eig_P} defined in \eqref{Ev_Eigf}. $ g_{0, 0} $ is defined in
\eqref{u0} and the $ g_{\ell, p}(\rho) $ are defined by
\eqref{syst-sol}, \eqref{flp}, \eqref{fl0} and satisfy the estimates
\eqref{Est-g-lp-2}. Finally  the
function
$$ 
u(t, \rho) = \sum_{\ell \geq 0} u_{\ell}(t, \rho) 
$$
is a formal solution of \eqref{Pu=0} in the sense that system
\eqref{transp} is verified and formally equivalent to
\eqref{Pu=0}.

We say that $ \mathscr{K}[u] $ is a formal solution of equation
\eqref{Op-GM}. 
\end{proposition}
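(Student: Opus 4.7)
The plan is to proceed by induction on $\ell$, building the functions $u_\ell$ so that the system \eqref{transp} is satisfied at each level; once this is done, the proof reduces to checking that \eqref{transp} is formally equivalent to \eqref{Pu=0}. The base case $\ell=0$ is immediate: for $u_0$ as in \eqref{u0}, identity \eqref{Eq_0} gives $\mathcal{P}_0 u_0 = 0$, so both projections $\Pi_0\mathcal{P}_0 u_0$ and $(1-\Pi_0)\mathcal{P}_0 u_0$ vanish, matching \eqref{transp} at $\ell=0$ (the key fact $\Pi_0\mathcal{P}_1\Pi_0 u_0=0$, coming from the calibration of the parameter $r$ discussed right after item (ii) in the list preceding \eqref{transp}, makes the shift in the second line admissible).

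For the inductive step, I would assume that $u_0,\dots,u_{\ell-1}$ have been constructed of the form \eqref{uell} with coefficients obeying \eqref{Est-g-lp-2}. The transverse data $f_{\ell,p}$, $p\geq 1$, defined by \eqref{flp}, are then completely determined by the previously constructed $g_{\ell',p'}$, so I would first solve the first $\ell$ equations of the regularised system \eqref{system-ell-cutoff} via the convolutions in the first line of \eqref{syst-sol}, the cutoff $\chi_\ell$ being inserted precisely to put $\sigma$ in the half-line required by Lemma \ref{L-j_Est}. Once $g_{\ell,1},\dots,g_{\ell,\ell}$ are in hand, $f_{\ell,0}$ from \eqref{fl0} becomes explicit and $g_{\ell,0}$ is defined by the second line of \eqref{syst-sol}, the extra subtraction $-h_0-h_{m-1}$ absorbing the offending homogeneous solutions so that the exponential decay of Lemma \ref{L-0_Est} is preserved. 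The quantitative bounds \eqref{Est-g-lp-2} for the newly produced $g_{\ell,p}$ are then exactly the content of Lemma \ref{lemma:glp}, already established above; the combinatorial blow-up from \eqref{A-P_i} is harmless because all the inner sums contain at most $4m$ terms.

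It remains to verify the formal equivalence between \eqref{transp} and \eqref{Pu=0}. Substituting $u=\sum_\ell u_\ell$ in \eqref{Pu=0} and grouping by the formal order $\rho^{-N}$, the $(1-\Pi_0)$-projection at order $N=\ell$ reproduces directly the first line of \eqref{transp}. The $\Pi_0$-projection at the same order would carry the term $\rho^{-1}\Pi_0\mathcal{P}_1 u_{\ell-1}$, which by $\Pi_0\mathcal{P}_1\Pi_0=0$ reduces to $\rho^{-1}\Pi_0\mathcal{P}_1(1-\Pi_0)u_{\ell-1}$; absorbing this contribution into the equation one level higher, while keeping $\Pi_0\mathcal{P}_0 u_\ell$ on the left, reproduces verbatim the second line of \eqref{transp}. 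The main obstacle of the whole construction is ensuring that $f_{\ell,0}$ has order $\rho^{-\ell-1}$ instead of the naive $\rho^{-\ell}$; this is exactly what the vanishing $\Pi_0\mathcal{P}_1\Pi_0=0$ buys and is, in turn, what allows Lemma \ref{L-0_Est} to deliver the same $\rho^{-\ell} e^{-c_0\rho}$ bound for the longitudinal coefficient $g_{\ell,0}$ as for the transverse ones.
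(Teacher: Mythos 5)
Your proposal is correct and follows the same route as the paper. The paper does not present a separate proof for this proposition---it states ``So far we proved the following'' because the construction is carried out in the discussion that precedes it; your write-up is a faithful, coherent re-exposition of exactly that: the base case $u_0$ annihilated by $\mathcal{P}_0$, the inductive passage solving first the $p\geq 1$ equations of \eqref{system-ell-cutoff} so that $f_{\ell,0}$ becomes explicit, the role of the cutoff $\chi_\ell$ in meeting the hypotheses of Lemma \ref{L-j_Est}, the subtraction $-h_0-h_{m-1}$ in the longitudinal equation, the estimates delegated to Lemma \ref{lemma:glp}, and the observation that $\Pi_0\mathcal{P}_1\Pi_0 = 0$ is what pushes $f_{\ell,0}$ down to order $\rho^{-\ell-1}$ and makes the formal re-summation of \eqref{transp} into \eqref{Pu=0} go through. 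The only slightly loose point is the index bookkeeping in your last paragraph (the $\Pi_0$-equation indexed by $\ell$ in \eqref{transp} collects terms from the level-$(\ell+1)$ grouping while retaining $\Pi_0\mathcal{P}_0 u_\ell$ on the left, i.e.\ the shift is one level \emph{down}, not up), but the underlying idea---sum the two lines of \eqref{transp} over $\ell$ and check they reproduce $\sum_i \rho^{-i}\mathcal{P}_i u=0$ thanks to $\Pi_0\mathcal{P}_1\Pi_0=0$---is the right one and matches the paper.
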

Next we need a lemma allowing us to estimate the derivatives with
respect to $ t $ of the functions $ u_{\ell} $, $ \ell \geq 0 $.
\begin{lemma}
\label{Dul}
For any $ \ell \in \mathbb{N} $, any $ \alpha $, $ \beta $, $ \gamma
$, we have the estimate
\begin{equation}
\label{Dul-est}
|t^{\alpha} \partial_{t}^{\beta} \partial_{\rho}^{\gamma} u_{\ell}(t,
\rho) | \leq C_{u}^{\ell+\alpha+\beta+\gamma+1} (\ell+1)^{\ell
  \frac{2m-1}{2m} + \frac{\gamma}{2m}} \alpha!^{\frac{1}{2n+2}}
\beta!^{\frac{2n+1}{2n+2}} \frac{1}{\rho^{\ell}} e^{- c_{0} \rho},
\end{equation}
where $ C_{u} $ denotes a positive constant independent of $ \ell $, $
\alpha $, $ \beta $, $ \gamma $. 
\end{lemma}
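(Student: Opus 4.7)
The strategy is to use the decoupling structure of \eqref{uell}: since $g_{\ell,p}$ depends only on $\rho$ and $v_p$ only on $t$, bringing the operator $t^{\alpha}\partial_{t}^{\beta}\partial_{\rho}^{\gamma}$ inside the sum yields
\begin{equation*}
t^{\alpha}\partial_{t}^{\beta}\partial_{\rho}^{\gamma}u_{\ell}(t,\rho)
= \sum_{p=0}^{\ell} g_{\ell,p}^{(\gamma)}(\rho)\, t^{\alpha}\partial_{t}^{\beta} v_{p}(t).
\end{equation*}
I would then estimate the two factors independently and collect the bounds.

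First, I apply Lemma \ref{lemma:glp} with $k=\gamma$ to bound the $\rho$-part by
\begin{equation*}
|g_{\ell,p}^{(\gamma)}(\rho)|
\leq C^{\ell+1+(\gamma/(2m)-1)_+}\,(\ell+1)^{\ell(2m-1)/(2m)+\gamma/(2m)}\,\rho^{-\ell}\,e^{-c_{0}\rho},
\end{equation*}
which is uniform in $p$. Since $(\gamma/(2m)-1)_{+}\leq\gamma$, the constant prefactor is harmlessly dominated by $\widetilde C^{\,\ell+\gamma+1}$ after enlarging the base constant. This produces exactly the $(\ell+1)^{\ell(2m-1)/(2m)+\gamma/(2m)}\rho^{-\ell}e^{-c_0\rho}$ factor demanded by \eqref{Dul-est}.

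Second, I would invoke the Gel'fand--Shilov characterization of the eigenfunctions $v_{p}$ recorded in Theorem \ref{G-S_Char}. Since the eigenvalue problem \eqref{Eig_P} is of Bender--Wang type with $N=2n+1$, the eigenfunctions obey a Gevrey-type bi-index bound
\begin{equation*}
|t^{\alpha}\partial_{t}^{\beta}v_{p}(t)|
\leq C_{v}^{\alpha+\beta+1}\,\alpha!^{1/(2n+2)}\,\beta!^{(2n+1)/(2n+2)}\,R(E_{p}),
\end{equation*}
with $R(E_{p})$ a polynomial weight in the eigenvalue. The exponents $1/(2n+2)$ and $(2n+1)/(2n+2)$ sum to $1$, matching the scaling of the operator $-\partial_t^2+t^{2(2n+1)+2}$ against the weight $t^{2n}$, and they are precisely the ones appearing in \eqref{Dul-est}.

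Finally, I would assemble the two bounds, sum in $p$ from $0$ to $\ell$, and absorb the book-keeping factors: the sum has only $\ell+1$ terms, and $E_{p}\leq C_{E}(p+1)^{\mu}\leq C_{E}(\ell+1)^{\mu}$ for an explicit $\mu$, so $R(E_{p})\leq C'(\ell+1)^{\mu'}$. Both $(\ell+1)$ and polynomial powers of $(\ell+1)$ are absorbed into $C_{u}^{\ell+1}$ via the crude inequality $(\ell+1)^{c}\leq \widehat C^{\,\ell+1}$ (for $\widehat C$ depending on $c$), yielding \eqref{Dul-est}. The only real subtlety is checking that the $p$-dependence in the Gel'fand--Shilov bound for $v_p$ is at most polynomial in $p$ (hence in $\ell$), so that it does not upgrade the sharp exponent $\ell(2m-1)/(2m)$; this is guaranteed by Theorem \ref{G-S_Char} together with the known growth $E_p\asymp p^{2m/(2m-1)}$ type behavior for the spectrum of \eqref{Eig_P}. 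The rest is bookkeeping.
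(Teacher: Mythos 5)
Your proposal follows the paper's proof almost exactly: split $u_{\ell}$ via \eqref{uell}, bound $|\partial_{\rho}^{\gamma}g_{\ell,p}|$ by Lemma \ref{lemma:glp} and $|t^{\alpha}\partial_{t}^{\beta}v_{p}|$ by Theorem \ref{G-S_Char}, then sum the $\ell+1$ terms. The one inaccuracy is your description of the $p$-dependence: Theorem \ref{G-S_Char} actually gives $|t^{\alpha}\partial_{t}^{\beta}v_{p}(t)|\leq C_{v}^{p+\alpha+\beta+1}\alpha!^{1/(2n+2)}\beta!^{(2n+1)/(2n+2)}$, i.e.\ the dependence on $p$ is exponential ($C_{v}^{p}$) rather than polynomial in $E_{p}$, and moreover $E_{p}=4p(n+1)+2n+1$ grows only linearly in $p$, not like $p^{2m/(2m-1)}$; neither point disturbs the conclusion, since $\sum_{p=0}^{\ell}C_{v}^{p}\lesssim C_{v}^{\ell+1}$ is still absorbed into $C_{u}^{\ell+1}$.
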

\begin{proof}
We apply Lemma \ref{lemma:glp} and Theorem \ref{G-S_Char} to
$$ 
u_{\ell}(t, \rho) = \sum_{p=0}^{\ell} g_{\ell,p}(\rho) v_{p}(t) .
$$
Then
\begin{multline*}
|t^{\alpha} \partial_{t}^{\beta} \partial_{\rho}^{\gamma} u_{\ell}(t,
\rho) | \leq \sum_{p=0}^{\ell} | \partial_{\rho}^{\gamma}
g_{\ell,p}(\rho)| | t^{\alpha} \partial^{\beta}_{t} v_{p}(t) |
\\
\leq
\sum_{p=0}^{\ell} C_{v}^{p+\alpha+\beta+1}
\alpha!^{\frac{1}{2n+2}} \beta!^{\frac{2n+1}{2n+2}}
C_{g}^{\ell+1+(\frac{k}{2m} - 1)_{+}} \frac{(\ell+ 1)^{\ell
  \left(1-\frac{1}{2m}\right) + \frac{k}{2m}}}{\rho^{\ell}}
  e^{-c_{0}\rho}
\end{multline*}
Then we reach the conclusion choosing $ C_{u} $ large enough.
\end{proof}

\section{Turning a formal solution into a true solution} 
\renewcommand{\theequation}{\thesection.\arabic{equation}}
\setcounter{equation}{0} \setcounter{theorem}{0}

Our next task is to turn the formal solution just constructed into a
true solution. The obtained function is a solution
of an equation of the form $ M_{n, m} u = f $, with $ f $, more
regular than $ G^{\frac{2m}{2m-1}} $.

In order to define the ``approximate'' solution to the problem
$M_{n,m}u=0$, we need another family of cutoff functions.
\begin{lemma}[\cite{bmt}]
\label{lemma:cutoff}
Let $ \sigma > 1 $. There exists a family of cutoff functions $ \omega_{j}
\in G^{\sigma}(\mathbb{R}^{n}_{x}) $, $ 0 \leq
\omega_{j} (x) \leq 1 $, $ j = 0, 1, 2, \ldots $, such that
\begin{itemize}
\item[1- ]{}
$ \omega_{j} \equiv 0 $ if $ |x| \leq 2 R (j+1) $, $ \omega_{j} \equiv
1$ if $ |x| \geq 4 R (j+1) $, with $ R $ an arbitrary positive
constant. 
\item[2- ]{}
There is a suitable constant $ C_{\omega} $, independent of $ j $, $
\alpha $, $ R $, such that
\begin{equation}
\label{eq:cutoffder2}
| D^{\alpha} \omega_{j}(x) | \leq C_{\omega}^{|\alpha| + 1}
R^{-|\alpha|}, 
\qquad \text{if } |\alpha| \leq 3j.
\end{equation}
and
\begin{equation}
\label{eq:cutoffdernolim2}
| D^{\alpha} \omega_{j}(x) | \leq ( R C_{\omega})^{|\alpha| + 1}
\frac{\alpha!^{\sigma}}{|x|^{|\alpha|}}, \qquad \text{for every } \alpha.
\end{equation}
\end{itemize}
\end{lemma}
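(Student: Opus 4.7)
The plan is to use the classical iterated convolution construction due to Bruna--Meise--Taylor (generalizing Ehrenpreis' idea). The two-speed behavior of the estimates --- namely that the first $3j$ derivatives behave like those of a $C^{\infty}$ bump at scale $R$ while higher derivatives obey a Gevrey bound --- is produced by distributing the differentiations across many convolution factors.

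First I would fix once and for all a reference Gevrey bump $\phi\in G^{\sigma}(\mathbb{R}^{n})$ with $\mathrm{supp}\,\phi\subset B(0,1)$, $\int\phi=1$, and standard estimates
\[
\|D^{\alpha}\phi\|_{L^{1}}\leq A^{|\alpha|+1}\alpha!^{\sigma}\qquad \forall\alpha,
\]
whose existence is classical (e.g.\ from the truncated symbol $e^{-(1-|x|^{2})^{-1/(\sigma-1)}}$). For each $j\geq 0$ I then set $N_{j}=3j+1$ and choose a scale $r_{j}$ of order $R$ with $N_{j}r_{j}=R(j+1)$, and write $\phi_{r_{j}}(x)=r_{j}^{-n}\phi(x/r_{j})$, so that $\mathrm{supp}\,\phi_{r_{j}}\subset B(0,r_{j})$ and $\int\phi_{r_{j}}=1$. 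Let $\chi_{j}$ be the characteristic function of $\{|x|\geq 3R(j+1)\}$ and define
\[
\omega_{j}\;=\;\chi_{j}\,*\,\phi_{r_{j}}^{*N_{j}}.
\]
The support conditions in item 1 follow immediately: the $N_{j}$-fold convolution is supported in $B(0,N_{j}r_{j})=B(0,R(j+1))$ with integral one, so $\omega_{j}\equiv 0$ for $|x|\leq 2R(j+1)$ and $\omega_{j}\equiv 1$ for $|x|\geq 4R(j+1)$.

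For estimate \eqref{eq:cutoffder2}, with $|\alpha|\leq 3j\leq N_{j}$, I would spread the $|\alpha|$ derivatives one-at-a-time across the convolution factors: choose $\alpha=\alpha_{1}+\cdots+\alpha_{N_{j}}$ with $|\alpha_{k}|\leq 1$ for each $k$, apply Leibniz,
\[
D^{\alpha}\omega_{j}\;=\;\chi_{j}\,*\,D^{\alpha_{1}}\phi_{r_{j}}\,*\,\cdots\,*\,D^{\alpha_{N_{j}}}\phi_{r_{j}},
\]
and bound in $L^{\infty}$ by the product of $L^{1}$-norms. Since each factor with a first derivative contributes $\|D\phi_{r_{j}}\|_{L^{1}}\leq A/r_{j}\leq C/R$ and each undifferentiated factor contributes $1$, this gives $|D^{\alpha}\omega_{j}|\leq (C/R)^{|\alpha|}$, which is \eqref{eq:cutoffder2}. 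Crucially, no factor ever receives more than one derivative, so no factorial appears.

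For estimate \eqref{eq:cutoffdernolim2}, valid for all $\alpha$, I would concentrate all the differentiation on a single factor, writing $D^{\alpha}\omega_{j}=\chi_{j}*(D^{\alpha}\phi_{r_{j}})*\phi_{r_{j}}^{*(N_{j}-1)}$ and using the Gevrey bound $\|D^{\alpha}\phi_{r_{j}}\|_{L^{1}}\leq A^{|\alpha|+1}\alpha!^{\sigma}r_{j}^{-|\alpha|}$. Since $D^{\alpha}\omega_{j}$ is supported in the transition shell $2R(j+1)\leq|x|\leq 4R(j+1)$, we have $|x|\sim R(j+1)=N_{j}r_{j}$, which lets me trade the $r_{j}^{-|\alpha|}$ factor for $|x|^{-|\alpha|}$ multiplied by the appropriate power of $R$, producing the $(RC_{\omega})^{|\alpha|+1}\alpha!^{\sigma}|x|^{-|\alpha|}$ form. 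The main technical obstacle is calibrating $N_{j}$ and $r_{j}$ so that the two regimes --- the polynomially-many-derivatives-one-per-factor regime and the all-derivatives-on-one-factor regime --- match constants consistently, and checking that no intermediate distribution of derivatives yields a worse estimate than the two endpoints just described (this follows from convexity of $k\mapsto k!^{\sigma}$ for $\sigma>1$).
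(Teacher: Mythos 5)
Your construction (an $N_j$-fold iterated convolution with $N_j\sim 3j$, $r_j\sim R$, $N_j r_j = R(j+1)$) is the right one, and the derivation of the support properties and of \eqref{eq:cutoffder2} by placing at most one derivative on each factor is correct. The paper cites \cite{bmt} for this lemma without reproducing a proof, so there is no in-paper argument to compare against, but your scheme is the standard one.

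The step proving \eqref{eq:cutoffdernolim2}, however, has a genuine gap. Putting all $|\alpha|$ derivatives on a single factor gives
\[
|D^{\alpha}\omega_{j}|\leq\|D^{\alpha}\phi_{r_{j}}\|_{L^{1}}\leq A^{|\alpha|+1}\alpha!^{\sigma}\,r_{j}^{-|\alpha|},
\]
and on the transition shell $|x|\sim R(j+1)=N_{j}r_{j}$, so $r_{j}^{-|\alpha|}\sim N_{j}^{|\alpha|}|x|^{-|\alpha|}=(3j+1)^{|\alpha|}|x|^{-|\alpha|}$. The factor $(3j+1)^{|\alpha|}$ grows with $j$ and cannot be absorbed into a constant $C_{\omega}$ that is independent of $j$ (nor into the fixed power of $R$); the claimed ``trade'' simply does not close. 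Moreover the closing remark about convexity points in the wrong direction: since $k\mapsto k!^{\sigma}$ is log-convex, the product $\prod_{k}|\alpha_{k}|!^{\sigma}$ subject to $\sum|\alpha_{k}|=|\alpha|$ is \emph{maximized} at the ``all on one factor'' endpoint and \emph{minimized} near equidistribution, so the endpoint you chose is the worst possible bound, not a safe one.

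The fix is to distribute the derivatives as evenly as possible across the $N_{j}$ factors. Writing $|\alpha|=qN_{j}+s$ and giving $q$ or $q+1$ derivatives to each factor yields
\[
|D^{\alpha}\omega_{j}|\;\lesssim\;r_{j}^{-|\alpha|}A^{|\alpha|+N_{j}}\Big(\prod_{k}|\alpha_{k}|!\Big)^{\sigma}\;\lesssim\;r_{j}^{-|\alpha|}A^{|\alpha|+N_{j}}\,\frac{|\alpha|!^{\sigma}}{N_{j}^{\sigma|\alpha|}},
\]
using the multinomial estimate $\prod_{k}|\alpha_{k}|!\leq C^{|\alpha|}|\alpha|!/N_{j}^{|\alpha|}$ for near-equal parts. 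Combined with $r_{j}^{-|\alpha|}\sim N_{j}^{|\alpha|}|x|^{-|\alpha|}$, the unwanted $N_{j}^{|\alpha|}$ is now dominated by the gain $N_{j}^{-\sigma|\alpha|}$ (this is precisely where $\sigma>1$ is used), and the residual factor $N_{j}^{(1-\sigma)|\alpha|}A^{|\alpha|+N_{j}}$ is uniformly bounded for $|\alpha|\geq N_{j}$ once $N_{j}^{\sigma-1}\geq A$, i.e.\ for $j$ larger than a fixed threshold; the finitely many remaining $j$ are handled crudely. You should replace the ``all on one factor'' step and the convexity remark with this equidistribution argument.
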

Let $ \omega_{\ell} $ denote cutoff functions like those in the above
lemma, with $ \sigma > 1 $ sufficiently close to 1 and $ R $ large to
be chosen later. We shall be more precise further on.

Define 
\begin{align}
  \label{utilde}
\tilde{u}(t,\rho) =\sum_{\ell\geq 0} u_{\ell}(t,\rho)
  \omega_{\ell}(\rho),
\end{align}
where $ u_{\ell} $ is given by Proposition \ref{formal}. We point out
that the above series is actually a locally finite sum, so that we do
not have any convergence problem. 

We consider
\begin{equation}
\label{Kutilde}
\mathscr{K}[\tilde{u}](x,y) 
=\int_{0}^{+\infty}  e^{iy\rho^{\frac{2m}{2m-1}}}
\rho^{r+\frac{2m}{(n+1)(2m-1)}}  \tilde{u} (t,\rho)_{\big|{
    t=\rho^{\frac{m}{(n+1)(2m-1)}}x}} d\rho ,
\end{equation}
where $ \tilde{u} $ is given by \eqref{utilde}. 

Applying $M_{n,m}$ to $ \mathscr{K}[\tilde{u}](x,y) $, we obtain
$$ 
\mathscr{K}\left[ \sum_{i=0}^{2m} \rho^{-i} \mathcal{P}_{i} (t,
  \partial_{t}, \partial_{\rho}) \sum_{\ell \geq 0} u_{\ell}
  \omega_{\ell} \right](x,y) .
$$
Let us look at the quantity in square brackets. We have
$$
\sum_{i=0}^{2m} \rho^{-i} \mathcal{P}_{i} (t,
  \partial_{t}, \partial_{\rho}) \sum_{\ell \geq 0} u_{\ell}
  \omega_{\ell}
  =
\sum_{i=0}^{2m} \sum_{\ell \geq 0} \sum_{\gamma=0}^{2m}
\frac{1}{\rho^{i}} \partial_{\rho}^{\gamma} \omega_{\ell}
\frac{1}{\gamma!} \mathcal{P}_{i}^{(\gamma)}(t, \partial_{t},
\partial_{\rho}) u_{\ell},   
$$
where $ \mathcal{P}_{i}^{(\gamma)}  $denotes the differential operator
whose symbol is $ \partial_{\sigma}^{\gamma}\mathcal{P}_{i}(t, \tau,
\sigma) $.

We are going to consider separately the cases where the cutoff
function $ \omega_{\ell} $ takes derivatives and those where it does not
take any derivative. The above quantity becomes
$$ 
\sum_{i=0}^{2m} \sum_{\ell \geq 0}
\frac{1}{\rho^{i}} \omega_{\ell} \mathcal{P}_{i}(t, \partial_{t},
\partial_{\rho}) u_{\ell}
+
\sum_{i=0}^{2m} \sum_{\ell \geq 0} \sum_{\gamma=1}^{2m}
\frac{1}{\rho^{i}} \partial_{\rho}^{\gamma}\omega_{\ell}
\frac{1}{\gamma!} \mathcal{P}_{i}^{(\gamma)}(t, \partial_{t},
\partial_{\rho}) u_{\ell} = A_{1} + A_{2}.
$$
Let us consider $ A_{1} $ first. We have
\begin{multline}
  \label{A1}
A_{1} = \sum_{\ell \geq 0} \omega_{\ell}  \mathcal{P}_{0} u_{\ell} + \sum_{\ell \geq 0} \sum_{i=1}^{2m} 
\frac{1}{\rho^{i}} \omega_{\ell} \mathcal{P}_{i}(t, \partial_{t},
\partial_{\rho}) u_{\ell}
\\
=
\sum_{\ell \geq 0} \omega_{\ell}  \mathcal{P}_{0} u_{\ell} +
\sum_{\ell \geq 1} \sum_{i=1}^{\min\{\ell, 2m\}}  \frac{1}{\rho^{i}}
\omega_{\ell-i}  \mathcal{P}_{i}(t, \partial_{t},
\partial_{\rho}) u_{\ell - i} 
\\
=
\sum_{\ell \geq 1} \omega_{\ell} ( 1 - \Pi_{0}) \mathcal{P}_{0}
u_{\ell} +
\sum_{\ell \geq 1} \sum_{i=1}^{\min\{\ell, 2m\}}  \frac{1}{\rho^{i}}
\omega_{\ell-i} (1 - \Pi_{0}) \mathcal{P}_{i}(t, \partial_{t},
\partial_{\rho}) u_{\ell - i}
\\
+
\sum_{\ell \geq 1} \omega_{\ell}  \Pi_{0} \mathcal{P}_{0}
u_{\ell} +
\sum_{\ell \geq 1} \sum_{i=1}^{\min\{\ell, 2m\}}  \frac{1}{\rho^{i}}
\omega_{\ell-i} \Pi_{0}  \mathcal{P}_{i}(t, \partial_{t},
\partial_{\rho}) u_{\ell - i}
\\
=
\sum_{\ell \geq 1} \left[ \omega_{\ell} ( 1 - \Pi_{0}) \mathcal{P}_{0}
u_{\ell} +
\sum_{i=1}^{\min\{\ell, 2m\}}  \frac{1}{\rho^{i}}
\omega_{\ell-i} (1 - \Pi_{0}) \mathcal{P}_{i}(t, \partial_{t},
\partial_{\rho}) u_{\ell - i} \right]
\\
+
\sum_{\ell \geq 1} \omega_{\ell}  \Pi_{0} \mathcal{P}_{0}
u_{\ell} + \sum_{\ell \geq 2} \frac{1}{\rho} \omega_{\ell-1} \Pi_{0}
\mathcal{P}_{1} (1 - \Pi_{0}) u_{\ell-1}
+
\sum_{\ell \geq 2} \sum_{i=2}^{\min\{\ell, 2m\}} \frac{1}{\rho^{i}}
\omega_{\ell-i} \Pi_{0} \mathcal{P}_{i}(t, \partial_{t},
\partial_{\rho}) u_{\ell - i}
\\
=
\sum_{\ell \geq 1} \left[ \omega_{\ell} ( 1 - \Pi_{0}) \mathcal{P}_{0}
u_{\ell} +
\sum_{i=1}^{\min\{\ell, 2m\}}  \frac{1}{\rho^{i}}
\omega_{\ell-i} (1 - \Pi_{0}) \mathcal{P}_{i}(t, \partial_{t},
\partial_{\rho}) u_{\ell - i} \right]
\\
+
\sum_{\ell \geq 1} \left[ \omega_{\ell}  \Pi_{0} \mathcal{P}_{0}
  u_{\ell} +
\frac{1}{\rho} \omega_{\ell} \Pi_{0} \mathcal{P}_{1} (1 - \Pi_{0})
u_{\ell} +
\sum_{i=2}^{\min\{\ell+1, 2m\}} \frac{1}{\rho^{i}}
\omega_{\ell+1-i} \Pi_{0} \mathcal{P}_{i}(t, \partial_{t},
\partial_{\rho}) u_{\ell +1 - i}
\right] ,
\end{multline}
where we used the fact that $ \mathcal{P}_{0} u_{0} = 0 $.

We immediately see that if the functions $ \omega $ are identically 1
or 0 the quantities in brackets above are zero because of
\eqref{transp}. As a consequence the quantities in square brackets
above have $ \rho $--support for $ 2R(\ell+1-2m) \leq \rho \leq
4R(\ell + 1) $. 

Consider now $ A_{2} $. Since $ \gamma $ runs over a finite interval,
the derivatives of the functions $ \omega_{\ell} $ are uniformly
bounded and as a consequence
$$ 
\sum_{\gamma=1}^{2m} \frac{1}{\rho^{i}} \partial_{\rho}^{\gamma}\omega_{\ell}
\frac{1}{\gamma!} \mathcal{P}_{i}^{(\gamma)}(t, \partial_{t},
\partial_{\rho}) u_{\ell} 
$$
has $ \rho $--support in the interval $ [ 2R(\ell+1), 4R(\ell+1)]
$. Hence
$$
M_{n,m} \mathscr{K}[\tilde{u}](x,y) = \mathscr{K} \Big[\sum_{\ell \geq 1}
w_{\ell}\Big](x,y), 
$$
where the $ \rho $--support of $ w_{\ell} $ is contained in the
interval $ [ 2R(\ell+1-2m) , 4R(\ell+1)] $.

We now show that $ \mathscr{K}[w](x,y)$, where $ w = \sum_{\ell \geq
  1} w_{\ell} $ belongs to the following class of functions.
\begin{definition}
	Let $s>1$, we denote by $\gamma^{s}(\Omega)$, $\Omega$ open subset of $\mathbb{R}^{n}$, the set of all $\varphi \in C^{\infty}(\Omega)$,
	for which, to every compact set $K$ in $\Omega$ and every
        $\varepsilon > 0$, there is a constant $C_{\varepsilon}$ such that, for every
	$\alpha \in \mathbb{Z}_{+}^{n}$,
	\begin{align}
	|D^{\alpha}\varphi(x)| \leq C \varepsilon^{|\alpha|} |\alpha|^{s|\alpha|}, \quad x\in K.
	\end{align}
\end{definition}
The classes $\gamma^{s}(\Omega)$ are the (local) Beurling classes of order $ s
$ and, for further reference on the subject, we refer to
\cite{H63}. We may also define the global version of the classes $
\gamma $, by using uniform constants. These classes are denoted as $
\gamma_{g}^{s}(\Omega) $.

\begin{proposition}
\label{gamma}
With the above notation the function
$M_{n,m}\mathscr{K}[\tilde{u}](x,y)$  belongs to
$\gamma^{2m/(2m-1)}_{g}(\mathbb{R}^{2})$. 
\end{proposition}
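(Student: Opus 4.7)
The plan is to differentiate $M_{n,m}\mathscr{K}[\tilde u]=\mathscr{K}[\sum_{\ell\ge1}w_\ell]$ under the integral sign, derive a uniform (in $(x,y)\in\mathbb R^2$) bound for each term $\partial_x^a\partial_y^b\mathscr{K}[w_\ell]$ using the fact that the $\rho$-support of $w_\ell$ sits in $I_\ell=[2R(\ell+1-2m),4R(\ell+1)]$, and then sum in $\ell$ using the super-exponential decay $\ell^{-\ell/(2m)}$ that the $u_\ell$'s inherit from Lemma \ref{lemma:glp}.

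First I would bound $\partial_t^a w_\ell(t,\rho)$ on $I_\ell$. The pieces of $A_1$ (the two brackets in \eqref{A1}) vanish wherever the surrounding $\omega$'s are simultaneously $0$ or $1$ because of \eqref{transp}, so they are supported only in the transition annulus of the $\omega$'s, whose width is $O(R\ell)$. The pieces of $A_2$ involve at least one $\partial_\rho^{\gamma}\omega_\ell$ and are likewise supported in $I_\ell$; by \eqref{ii} these $\omega$-derivatives are uniformly bounded for $\gamma\le 2m$. Combining Lemma \ref{Dul} with the uniform factor $\rho^{-i}\le(2R\ell)^{-i}$ on $I_\ell$ I expect the uniform-in-$t$ estimate
\begin{equation}
\label{wl-est}
|\partial_t^a w_\ell(t,\rho)|\le C_1^{\ell+a+1}(\ell+1)^{\ell(2m-1)/(2m)}\,a!^{(2n+1)/(2n+2)}\,\rho^{-\ell}\,e^{-c_0\rho}.
\end{equation}

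Next I set $\theta=2m/(2m-1)$ and $\gamma=m/((n+1)(2m-1))$. Each $\partial_y$ produces a factor $\rho^\theta$ from $e^{iy\rho^\theta}$ and each $\partial_x$ produces $\rho^\gamma\partial_t$ through $t=\rho^\gamma x$, so
\begin{equation*}
\partial_x^a\partial_y^b\mathscr{K}[w_\ell](x,y)=i^{b}\!\int_{I_\ell}\!e^{iy\rho^\theta}\rho^{r+\frac{2m}{(n+1)(2m-1)}+b\theta+a\gamma}\bigl[\partial_t^a w_\ell\bigr]_{t=\rho^\gamma x}\,d\rho.
\end{equation*}
Inserting \eqref{wl-est}, using $\rho\asymp R\ell$ on $I_\ell$, $\mathrm{length}(I_\ell)=O(R\ell)$, and absorbing the fixed exponents coming from $r$ and $2m/((n+1)(2m-1))$ into the constant $C_2$, the bound $(\ell+1)^{\ell(2m-1)/(2m)}\rho^{-\ell}$ collapses on $I_\ell$ to $R^{-\ell}\ell^{-\ell/(2m)}$, whence
\begin{equation*}
|\partial_x^a\partial_y^b\mathscr{K}[w_\ell](x,y)|\le C_2^{\ell+a+b+1}(R\ell)^{a\gamma+b\theta}\,\ell^{-\ell/(2m)}\,a!^{(2n+1)/(2n+2)}\,e^{-c_0 R\ell},
\end{equation*}
uniformly in $(x,y)$. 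The sum over $\ell$ is then controlled by the elementary estimate $\sum_{\ell\ge1}\ell^{N}e^{-c_0 R\ell}\le C_3^N\,N!\,(c_0 R)^{-N}$ with $N=a\gamma+b\theta$, combined with the Stirling-type inequality $(a\gamma+b\theta)!\le C_4^{a+b}(a\gamma)!(b\theta)!\le C_5^{a+b}\,a!^{\gamma}\,b!^{\theta}$ and the identity
\begin{equation*}
\frac{2n+1}{2n+2}+\gamma=1+\frac{1}{(2m-1)(2n+2)}=s_0,\qquad \theta=\frac{2m}{2m-1}=s_1 .
\end{equation*}
This yields
\begin{equation*}
|\partial_x^a\partial_y^b M_{n,m}\mathscr{K}[\tilde u](x,y)|\le C_6^{a+b+1}(c_0 R)^{-a\gamma-b\theta}\,a!^{s_0}\,b!^{s_1}\le \widetilde C\,\bigl((c_0R)^{-\min(\gamma,\theta)}C_6\bigr)^{a+b}(a+b)^{s_1(a+b)},
\end{equation*}
uniformly on $\mathbb R^2$. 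Since $s_0\le s_1$, this is a global Gevrey-$s_1$ bound; more importantly, taking $R$ arbitrarily large makes the exponential base $(c_0R)^{-\min(\gamma,\theta)}C_6$ as small as we wish, giving exactly the defining property of membership in the global Beurling class $\gamma^{2m/(2m-1)}_g(\mathbb R^2)$.

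The main obstacle I expect is the careful bookkeeping in \eqref{wl-est}: I have to check that every piece in $A_1$ and $A_2$, involving up to $2m$ derivatives of $u_{\ell'}$ with $\ell'\in[\ell-2m+1,\ell]$ together with matching negative powers of $\rho$, indeed collapses on $I_\ell$ to the single exponent $\rho^{-\ell}$ with the claimed combinatorial prefactor — this is where the precise fit between the drop rate $\rho^{-\ell'}$ in Lemma \ref{lemma:glp} and the $\rho^{-i}$ from $\mathcal{P}_i/\rho^i$ must be verified uniformly. After that, the $y$-differentiation and the $\ell$-sum are a standard Laplace/Stirling calculation.
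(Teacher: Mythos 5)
Your decomposition into $A_1$ and $A_2$, the use of Lemma~\ref{Dul}, and the per--$\ell$ support localization $I_\ell=[2R(\ell+1-2m),4R(\ell+1)]$ are all on the same track as the paper, and the exponent bookkeeping identity $\tfrac{2n+1}{2n+2}+\gamma=s_0$, $\theta=s_1$ is correct. But the very last step is where a genuine gap opens up. After invoking the elementary bound
$\sum_{\ell\ge1}\ell^N e^{-c_0R\ell}\le C_3^N N!(c_0R)^{-N}$
and Stirling, you arrive at $|\partial_x^a\partial_y^bM_{n,m}\mathscr{K}[\tilde u]|\le C_6^{a+b+1}(c_0R)^{-a\gamma-b\theta}a!^{s_0}b!^{s_1}$ and then assert that ``taking $R$ arbitrarily large makes the exponential base $(c_0R)^{-\min(\gamma,\theta)}C_6$ as small as we wish, giving exactly the defining property of membership in the global Beurling class.'' This is not the Beurling property: $R$ is fixed once the cutoffs $\omega_\ell$ and hence $\tilde u$ have been constructed, so $(c_0R)^{-\min(\gamma,\theta)}C_6$ is one fixed constant, not a free small parameter. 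What you have proved is a Gevrey $G^{s_1}$ bound with a specific exponential base, which is strictly weaker than membership in $\gamma^{s_1}_g$; one cannot trade the function for the $\epsilon$ in the Beurling definition.

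The missing ingredient is exactly the super-exponential decay $\ell^{-\ell/(2m)}$ that you yourself exhibit in the per--$\ell$ estimate and then throw away in the ``elementary'' sum $\sum_\ell\ell^N e^{-c\ell}\le C^N N!\,c^{-N}$ (which retains only the exponential tail). The paper instead converts the $\ell$-dependence into the $\rho$-variable by using $\ell\le\rho/(2R)-1$ on the support of the cutoffs, which turns $(\ell+1)^{\ell(2m-1)/(2m)}\rho^{-\ell}$ into a factor of the form $\rho^{-\rho/(4mR)+1/(2m)}$, i.e. $e^{-a\rho\log\rho}$ with $a=1/(4mR)$ fixed; then Lemma~\ref{L-log-Est} delivers, for that fixed $a$, the bound $C_\epsilon\epsilon^j j^{js_0}$ for every $\epsilon>0$, which is precisely the arbitrary-$\epsilon$ estimate the Beurling class demands. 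Your route can be repaired in the same spirit: keep the $\ell^{-\ell/(2m)}$ weight in the $\ell$-sum and prove a discrete analogue of Lemma~\ref{L-log-Est}, namely that for fixed $c>0$ and every $\epsilon>0$ there is $C_\epsilon$ with $\sum_{\ell\ge1}\ell^{-\ell/(2m)}\ell^N e^{-c\ell}\le C_\epsilon\,\epsilon^N N^{s_1N}$; the $\ell^{-\ell/(2m)}$ factor, not the size of $R$, is what produces the arbitrary $\epsilon$.

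One smaller bookkeeping remark: in the displayed per--$\ell$ estimate the factor $R^{-\ell}$ that you correctly identified in the prose (``collapses to $R^{-\ell}\ell^{-\ell/(2m)}$'') has been dropped, and for $\ell<a\gamma+b\theta$ the residual $R^{a\gamma+b\theta-\ell}$ grows with $R$; these $R$--powers cancel only after carrying them through the $\ell$--sum consistently, which is another reason the source of smallness cannot be ``$R$ large'' but must be the genuine super-exponential decay in $\ell$ (equivalently in $\rho$).
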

\begin{proof}
We start off by estimating the derivatives of $ \mathscr{K} \Big[\sum_{\ell \geq 1}
w_{\ell}\Big](x,y) $. Set $ s_{0} = \frac{2m}{2m-1} $ and $ r' = r +
\frac{s_{0}}{n+1} $. Then
\begin{equation}
  \label{f}
| \partial_{y}^{\alpha} \partial_{x}^{\beta} \mathscr{K} \Big[\sum_{\ell \geq 1}
w_{\ell}\Big](x,y) |
= \left| \int_{0}^{+\infty}  e^{iy\rho^{s_{0}}}
\rho^{r' + \alpha s_{0} + \frac{\beta s_{0}}{2(n+1)}}
(\partial_{t}^{\beta} w(t,\rho))_{\big|{
    t=\rho^{\frac{s_{0}}{2(n+1)}}x}} d\rho \right|.
\end{equation}
As we pointed out above the quantity $ \partial_{t}^{\beta} w(t,\rho)
$ is the sum of two functions: $ \partial_{t}^{\beta} w(t,\rho) =
\partial_{t}^{\beta} f_{1}(t, \rho) + \partial_{t}^{\beta} f_{2}(t,
\rho) $, where $ f_{1} $ contains the terms where the cutoffs $
\omega_{\ell} $ are not getting derived, while $ f_{2} $ has the terms
where the $ \omega_{\ell} $ takes derivatives.
$$ 
f_{1}(t, \rho) = \sum_{i=0}^{2m} \sum_{\ell \geq 0}
\frac{1}{\rho^{i}} \omega_{\ell} \mathcal{P}_{i}(t, \partial_{t},
\partial_{\rho}) u_{\ell} ,
$$
$$ 
f_{2}(t, \rho) = \sum_{i=0}^{2m} \sum_{\ell \geq 0} \sum_{\gamma=1}^{2m}
\frac{1}{\rho^{i}} \partial_{\rho}^{\gamma}\omega_{\ell}
\frac{1}{\gamma!} \mathcal{P}_{i}^{(\gamma)}(t, \partial_{t},
\partial_{\rho}) u_{\ell} .
$$
Consider the above integral where $ \partial_{t}^{\beta} w $ has been
replaced by $ \partial_{t}^{\beta} f_{2} $. We have to estimate
\begin{multline*}
\left| \int_{0}^{+\infty}  e^{iy\rho^{s_{0}}}
\rho^{r' + \alpha s_{0} + \frac{\beta s_{0}}{2(n+1)}}
\left (\partial_{t}^{\beta} \sum_{i=0}^{2m} \sum_{\ell \geq 0} \sum_{\gamma=1}^{2m}
\frac{1}{\rho^{i}} \partial_{\rho}^{\gamma}\omega_{\ell}
\frac{1}{\gamma!} \mathcal{P}_{i}^{(\gamma)}(t, \partial_{t},
\partial_{\rho}) u_{\ell}  \right )_{\big|{
  t=\rho^{\frac{s_{0}}{2(n+1)}}x}} d\rho \right|
\\
\leq
\sum_{i=0}^{2m} \sum_{\ell \geq 0} \sum_{\gamma=1}^{2m}
\int_{0}^{+\infty} \rho^{r' + \alpha s_{0} + \frac{\beta
    s_{0}}{2(n+1)}}
\left |\partial_{t}^{\beta} \frac{1}{\rho^{i}} \partial_{\rho}^{\gamma}\omega_{\ell}
\frac{1}{\gamma!} \mathcal{P}_{i}^{(\gamma)}(t, \partial_{t},
\partial_{\rho}) u_{\ell}  \right |_{\big|{
  t=\rho^{\frac{s_{0}}{2(n+1)}}x}} d\rho
\\
\leq
\sum_{i=0}^{2m} \sum_{\ell \geq 0} \sum_{\gamma=1}^{2m} \sum_{\beta'
  \leq \beta} \binom{\beta}{\beta'}
\int_{0}^{+\infty} \rho^{r' + \alpha s_{0} + \frac{\beta
    s_{0}}{2(n+1)}}
\left | \frac{1}{\rho^{i}} \partial_{\rho}^{\gamma}\omega_{\ell}
\frac{1}{\gamma!} \mathcal{P}_{i, (\beta')}^{(\gamma)}(t, \partial_{t},
\partial_{\rho}) \partial_{t}^{\beta - \beta'} u_{\ell}(t, \rho)  \right |_{\big|{
  t=\rho^{\frac{s_{0}}{2(n+1)}}x}} d\rho ,
\end{multline*}
Where $  \mathcal{P}_{i, (\beta')}^{(\gamma)}(t, \partial_{t},
\partial_{\rho}) $ denotes the differential operator whose symbol is
given by $ \partial_{\sigma}^{\gamma} \partial_{t}^{\beta'} \mathcal{P}_{i}(t, \tau,
\sigma) $. 

Next we need to bound $ | \partial_{t}^{\beta} f_{2} | $. This
derivative is a sum of terms of the form
$$ 
\binom{\beta}{\beta'} \frac{1}{\rho^{i}}
|\partial_{\rho}^{\gamma}\omega_{\ell}| | t^{j-\beta'}
\partial_{t}^{j+\beta-\beta'} \partial_{\rho}^{2m-i-\gamma}
u_{\ell}(t, \rho)| ,
$$
modulo a constant independent of $ \alpha $, $ \beta $. We apply Lemma
\ref{Dul} to obtain the bound for the above quantity
\begin{multline*}
 \binom{\beta}{\beta'} \frac{1}{\rho^{i}}
|\partial_{\rho}^{\gamma}\omega_{\ell}|
C_{u}^{\ell+j-\beta'+j+\beta-\beta'+2m-i-\gamma+1}
\\
\cdot
(\ell+1)^{\ell
  \frac{2m-1}{2m} + \frac{2m-i-\gamma}{2m}}
(j-\beta')!^{\frac{1}{2n+2}} (j+\beta-\beta')!^{\frac{2n+1}{2n+2}}
\frac{e^{-c_{0}\rho}}{\rho^{\ell}} . 
\end{multline*}
Since $ 0 \leq i \leq 2m $, $ 1 \leq \gamma \leq 2m-i $, $ j \leq i $,
we may bound $ j $ by $ i $ and replace $ \beta' $ by zero. We also
point out that the base of each factorial in the second line above is
bounded by $ \ell + \beta + 2m $. As a consequence we get the estimate
\begin{multline*}
\binom{\beta}{\beta'}
\frac{e^{-c_{0} \rho}}{\rho^{i+\ell}} |\partial_{\rho}^{\gamma}\omega_{\ell}|
C_{1}^{\ell+\beta+1} (\ell + \beta + 2m)^{\ell \frac{2m-1}{2m} + i
  \frac{2m-1}{2m} + \frac{2m-\gamma}{2m} + \beta \frac{2n+1}{2n+2}}
\\
\leq
\frac{e^{-c_{0} \rho}}{\rho^{i+\ell}}
|\partial_{\rho}^{\gamma}\omega_{\ell}|
C_{2}^{\ell+\beta+1} (\ell + \beta + 2m)^{\ell \frac{2m-1}{2m} + \beta
  \frac{2n+1}{2n+2}}
\\
\leq
\frac{e^{-c_{0} \rho}}{\rho^{i+\ell}}
|\partial_{\rho}^{\gamma}\omega_{\ell}|
C_{3}^{\ell+\beta+1} \ell^{\ell \frac{2m-1}{2m}} \beta^{
  \beta\frac{2n+1}{2n+2}} .
\end{multline*}
Now the support of $ \partial_{\rho}^{\gamma}\omega_{\ell} $ is
contained in the interval $ [2R (\ell+1) , 4R (\ell+1)] $, so that $
\ell \leq \frac{\rho}{2R} - 1 $. As a consequence
$$ 
\frac{1}{\rho^{\ell}} \ell^{\ell \frac{2m-1}{2m}} \leq
\left(\frac{1}{2R}\right)^{\ell \frac{2m-1}{2m}} \rho^{-
  \frac{\ell}{2m}} \leq \left(\frac{1}{2R}\right)^{\ell
  \frac{2m-1}{2m}} \rho^{- \rho \frac{1}{4mR} + \frac{1}{2m}} .
$$
This allows us to conclude that
$$ 
| \partial_{t}^{\beta} f_{2}(t, \rho) | \leq \sum_{\ell \geq 0} \left(\frac{C_{4}}{2R}\right)^{\ell
  \frac{2m-1}{2m}} C_{4}^{\beta+1} \beta^{\beta\frac{2n+1}{2n+2}}
e^{- \frac{1}{4mR} \rho \log \rho + \frac{1}{2m} \log \rho - c_{0} \rho }.
$$
Choosing $ R $ large enough allows us to bound the sum over $ \ell $
and absorb it into the constant.

Going back to \eqref{f} where $ \partial_{t}^{\beta} w $ has been
replaced by $ \partial_{t}^{\beta} f_{2} $ we obtain
\begin{multline*}
\left| \int_{0}^{+\infty}  e^{iy\rho^{s_{0}}}
\rho^{r' + \alpha s_{0} + \frac{\beta s_{0}}{2(n+1)}}
(\partial_{t}^{\beta} f_{2}(t,\rho))_{\big|{
    t=\rho^{\frac{s_{0}}{2(n+1)}}x}} d\rho \right|
\\
\leq
C_{4}^{\beta+1} \beta^{\beta\frac{2n+1}{2n+2}} 
\int_{0}^{+\infty} \rho^{r' + \alpha s_{0} + \frac{\beta
    s_{0}}{2(n+1)}} e^{- \frac{1}{4mR} \rho \log \rho + \frac{1}{2m}
  \log \rho - c_{0} \rho } d\rho .
\end{multline*}
Applying Lemma \ref{L-log-Est}, for any $ \epsilon > 0 $, the right
hand side of the above relation is bounded by
$$ 
C_{4}^{\beta+1} \beta^{\beta\frac{2n+1}{2n+2}} \epsilon^{\alpha+
  \frac{\beta}{2n+2}} C_{\epsilon}
\left(\alpha+\frac{\beta}{2n+2}\right)^{\left(\alpha+\frac{\beta}{2n+2}\right)s_{0}} .
$$
With a slight change of notation we see that the above quantity is
estimated by
\begin{equation}
  \label{f2-fin}
C_{\epsilon} \epsilon^{\alpha + \beta} \alpha^{\alpha s_{0} }
\beta^{\beta s_{0}},
\end{equation}
since
$$ 
\frac{s_{0}}{2n+2} + \frac{2n+1}{2n+2} < s_{0}.
$$
Consider now 
$$ 
 \int_{0}^{+\infty}  e^{iy\rho^{s_{0}}}
\rho^{r' + \alpha s_{0} + \frac{\beta s_{0}}{2(n+1)}}
(\partial_{t}^{\beta} f_{1}(t,\rho))_{\big|{
    t=\rho^{\frac{s_{0}}{2(n+1)}}x}} d\rho  ,
$$
where $ f_{1} $ has been defined after \eqref{f}. Arguin as in
\eqref{A1} we may rewrite the integral above as
\begin{multline*}
\int_{0}^{+\infty}  e^{iy\rho^{s_{0}}}
\rho^{r' + \alpha s_{0} + \frac{\beta s_{0}}{2(n+1)}}
\Bigg[\partial_{t}^{\beta} \sum_{\ell \geq 1} \Big( \omega_{\ell} ( 1 - \Pi_{0}) \mathcal{P}_{0}
u_{\ell}
\\
+
\sum_{i=1}^{\min\{\ell, 2m\}}  \frac{1}{\rho^{i}}
\omega_{\ell-i} (1 - \Pi_{0}) \mathcal{P}_{i}(t, \partial_{t},
\partial_{\rho}) u_{\ell - i} \Big) \Bigg]_{\big|{
    t=\rho^{\frac{s_{0}}{2(n+1)}}x}} d\rho
\\
+
\int_{0}^{+\infty}  e^{iy\rho^{s_{0}}}
\rho^{r' + \alpha s_{0} + \frac{\beta s_{0}}{2(n+1)}}
\Bigg[\partial_{t}^{\beta} \sum_{\ell \geq 1} \Big(
\omega_{\ell}  \Pi_{0} \mathcal{P}_{0}
  u_{\ell} +
\frac{1}{\rho} \omega_{\ell} \Pi_{0} \mathcal{P}_{1} (1 - \Pi_{0})
u_{\ell}
\\
+
\sum_{i=2}^{\min\{\ell+1, 2m\}} \frac{1}{\rho^{i}}
\omega_{\ell+1-i} \Pi_{0} \mathcal{P}_{i}(t, \partial_{t},
\partial_{\rho}) u_{\ell +1 - i} \Big) \Bigg]_{\big|{
    t=\rho^{\frac{s_{0}}{2(n+1)}}x}} d\rho.
\end{multline*}
The quantities in square brackets
above have $ \rho $--support for $ 2R(\ell+1-2m) \leq \rho \leq
4R(\ell + 1) $.

The estimate of the derivatives of order $ \beta $ with respect to $ t
$ of the above terms is made according to the same lines of the
estimate above, since the order of $ \rho $-derivative of $
\omega_{\ell} $ played no role in the above estimate.

Hence we get \eqref{f2-fin} also in this case.
This ends the proof of the proposition.
\end{proof}

\section{Proof of Theorem \ref{Th}}
\renewcommand{\theequation}{\thesection.\arabic{equation}}
\setcounter{equation}{0} \setcounter{theorem}{0}

For the sake of completeness and to make
the paper self contained, we recall, in the Gevrey setting, the
following result due to M\'etivier.
\begin{theorem}[\cite{M80}]\label{Met_80_In}
	Let $P$ be a differential operator with analytic coefficient. Assume that in $\Omega$ open neighborhood of $x_{0} \in \mathbb{R}^{n}$
	there exist a continuous operator $R$ from $L^{2}(\Omega)$ in $L^{2}(\Omega)$ such that $PR= Id$. Then $P$ is $G^{s}$-hypoelliptic
	in $x_{0}$ if and only if for any neighborhood $\Omega_{2} \subset \Omega$ of $x_{0}$, there exists a neighborhood
	$\Omega_{3} \Subset \Omega_{2}$ of $x_{0}$ and constants $C$ and $L$ such that for any $k \in \mathbb{Z}_{+}$ and any $u \in \mathscr{D}'(\Omega_{2})$
	we have:
	\begin{itemize}
		\item[i)] $Pu \in H^{k}\left(\Omega_{2}\right) \Rightarrow u_{|_{\Omega_{3}}} \in H^{k}(\Omega_{3})$;
		\item[ii)] the following estimate holds 
		\begin{align}\label{Met_Est_Gs}
		\|u\|_{k,\Omega_{3}} \leq CL^{k} \left( \|\hspace*{-0.1em}| Pu|\hspace*{-0.1em}\|_{k,\Omega_{2}}+ k^{sk}\|u\|_{0,\Omega_{2}}\right),
		\end{align}
		 where
		 \begin{align*}
		 \|\hspace*{-0.1em}| Pu|\hspace*{-0.1em}\|_{k,\Omega_{2}} = \sum_{|\alpha| \leq k} k^{s(k-|\alpha|)}\|D^{\alpha} Pu\|_{0,\Omega_{2}}.
		 \end{align*}
	\end{itemize}  
\end{theorem}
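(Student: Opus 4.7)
The plan is to establish the equivalence via the standard Sobolev characterization of Gevrey classes: a smooth function $u$ on an open set $V$ lies in $G^s(V)$ if and only if on every $K \Subset V$ there are constants $C, L > 0$ with $\|u\|_{k, K} \leq CL^k k^{sk}$ for all $k \in \mathbb{Z}_+$. This equivalence, a consequence of Sobolev embedding in one direction and interior regularity in the other, converts pointwise Gevrey estimates into geometric bounds on Sobolev norms and will be used throughout both implications.

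For sufficiency, assume (i) and (ii) and take $u \in \mathscr{D}'(\Omega_2)$ with $Pu \in G^s(\Omega_2)$. Iterating (i) shows that $u$ is $C^\infty$ on $\Omega_3$. Since $Pu \in G^s(\Omega_2)$, one has $\|D^\alpha Pu\|_{0, \Omega_2} \leq A B^{|\alpha|} |\alpha|^{s|\alpha|}$, and a direct computation yields $\|\hspace*{-0.1em}|Pu|\hspace*{-0.1em}\|_{k, \Omega_2} \leq A'(B')^k k^{sk}$. Substituting into \eqref{Met_Est_Gs} gives $\|u\|_{k, \Omega_3} \leq \widetilde{C}\widetilde{L}^k k^{sk}$, and the characterization above identifies $u$ as an element of $G^s(\Omega_3)$.

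For necessity, the continuous right inverse $R \colon L^2(\Omega) \to L^2(\Omega)$ is essential. Property (i) follows from the $C^\infty$-hypoellipticity implied by $G^s$-hypoellipticity, combined with a standard parametrix argument that propagates Sobolev regularity locally. For the quantitative estimate (ii), one introduces the Banach scale
$$
G^s_L(\Omega_2) = \bigl\{f \in C^\infty(\Omega_2) : \sup_k L^{-k} k^{-sk} \|\hspace*{-0.1em}|f|\hspace*{-0.1em}\|_{k, \Omega_2} < \infty\bigr\},
$$
and applies the closed graph theorem to the linear map $(Pu, u) \mapsto u|_{\Omega_3}$ between appropriate spaces of this scale. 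Its continuity yields an estimate whose geometric dependence on $k$ unfolds into \eqref{Met_Est_Gs}, provided the bounding constants can be taken uniform across the scale. This uniformity is supplied by a Baire category argument which uses the qualitative $G^s$-hypoellipticity hypothesis in an essential way.

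The main obstacle is precisely the uniformity in $k$ in the necessity direction. A naive application of the closed graph theorem at each fixed $k$ produces constants $C_k$ that may blow up arbitrarily; the content of Métivier's inequality is exactly that these constants grow only geometrically, with the $k^{sk}$ weight borne by the $\|u\|_{0, \Omega_2}$ term. Extracting this uniform growth requires the Banach-scale bookkeeping described above, together with careful use of $R$ to bypass null-space obstructions: since $P$ may have nontrivial kernel, the assignment $Pu \mapsto u$ is not directly well defined, and $R$ provides the canonical right-inverse choice on which the closed graph argument can be based.
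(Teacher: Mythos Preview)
The paper does not prove this theorem: it is quoted verbatim from M\'etivier \cite{M80} and introduced with the words ``For the sake of completeness and to make the paper self contained, we recall, in the Gevrey setting, the following result due to M\'etivier.'' There is therefore no in-paper proof to compare your proposal against.

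Your outline is consistent with the strategy in M\'etivier's original article: the sufficiency direction is indeed a routine translation between the Sobolev-scale characterization of $G^{s}$ and the weighted norm $\|\hspace*{-0.1em}|\cdot|\hspace*{-0.1em}\|_{k}$, and the necessity direction does ultimately rest on a closed-graph/Baire argument on a suitable Banach scale. Two cautions are worth recording. First, in the sufficiency step you use $\|D^{\alpha}Pu\|_{0,\Omega_{2}}$, i.e.\ $L^{2}$ norms on the full open set $\Omega_{2}$, whereas membership in $G^{s}(\Omega_{2})$ only controls derivatives on compact subsets; one must either shrink $\Omega_{2}$ slightly or assume $u$ has compact support to make this literal. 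Second, in the necessity direction your description remains a plan rather than a proof: the passage from ``closed graph at each level of the scale'' to a \emph{single} pair of constants $C,L$ valid for all $k$ is exactly the nontrivial content of the theorem, and you acknowledge this but do not carry it out. M\'etivier's argument makes essential use of the right inverse $R$ to set up a map on which Baire category can be applied uniformly, and the details (choice of the countable family of seminorms, verification that the graph is closed, extraction of the geometric constants) are not reproducible from your sketch alone. As a summary of the architecture it is accurate; as a proof it would need those details filled in from \cite{M80}.
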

We are going to use the above theorem to prove Theorem \ref{Th}.
Since the operator $M_{n,m}$ in (\ref{Op-GM}), as well as its adjoint,
satisfies the H\"ormander condition in $\Omega$, open neighborhood
of the origin, both $ M_{n,m} $ and its adjoint are subelliptic with a
loss of $2(1-1/(2n+1))$ derivatives (see \cite{H67} and \cite{RS}.)
As a consequence there is $\Omega_{1} \Subset \Omega$, neighborhood of the origin,
where  $M_{n,m}$ satisfies the assumption of Theorem \ref{Met_80_In}.

Arguing by contradiction,
assume that $M_{n,m}$ is $G^{s}$-hypoelliptic in a neighborhood of the
origin for $s <\frac{2m}{2m-1}$.

We showed before, in Proposition \ref{gamma}, that
$M_{n,m}\mathscr{K}[\tilde{u}] \in
\gamma^{2m/(2m-1)}_{g}(\mathbb{R}^{2})$;  
due to estimate (\ref{Met_Est_Gs}) and to the fact that
$G^{s_{1}}(\Omega_{2})  \subset \gamma^{s_{2}}(\Omega_{2})$  if
$s_{1}< s_{2}$ (see Lemma \ref{Ge_vs_Ga},)
we have that $ \mathscr{K}[\tilde{u}] \in \gamma^{2m/(2m-1)}(\Omega_{3})$,
$\Omega_{3} \Subset \Omega_{2}$ open neighborhood of the origin.

Next we need to prove the following
\begin{proposition}
\label{Fourier}
For any $ \epsilon > 0 $ there exists a $ C_{\epsilon} > 0 $ such that
for any $ \alpha \in \mathbb{N} $ we have
\begin{equation}
\label{y2daK}
\| \langle y \rangle^{2} \partial^{\alpha}_{y}
\mathscr{K}[\tilde{u}](0, y) \|_{L^{\infty}(\mathbb{R})} \leq
C_{\epsilon} \epsilon^{\alpha} \alpha!^{s_{0}} ,
\end{equation}
where
$$ 
s_{0} = \frac{2m}{2m-1}.
$$
\end{proposition}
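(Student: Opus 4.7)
The approach is to represent the left-hand side of \eqref{y2daK} as a one-dimensional oscillatory integral in $\rho$, remove the weight $\langle y\rangle^{2}$ by integrating by parts in $\rho$ against the phase $y\rho^{s_{0}}$, and then estimate the resulting amplitude by combining Lemma \ref{Dul}, Lemma \ref{lemma:cutoff} and Lemma \ref{L-log-Est}. Writing $r'=r+2m/((n+1)(2m-1))$, since $t=\rho^{m/((n+1)(2m-1))}x$ vanishes at $x=0$, the representation \eqref{Kutilde} reduces to
\begin{equation*}
\partial_{y}^{\alpha}\mathscr{K}[\tilde u](0,y) = i^{\alpha}\int_{0}^{+\infty}e^{iy\rho^{s_{0}}}\,\rho^{r'+\alpha s_{0}}\,\tilde u(0,\rho)\,d\rho,
\end{equation*}
with $\tilde u(0,\rho)=\sum_{\ell\geq 0}u_{\ell}(0,\rho)\,\omega_{\ell}(\rho)$.

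To absorb the factor $\langle y\rangle^{2}$ I would split the $y$-axis into $\{|y|\leq 1\}$, where the weight is bounded and a direct majorization is enough, and $\{|y|>1\}$. In the latter region, using the identity $e^{iy\rho^{s_{0}}}=(iys_{0}\rho^{s_{0}-1})^{-1}\partial_{\rho}e^{iy\rho^{s_{0}}}$, two integrations by parts convert the factor $y^{2}$ into $-s_{0}^{-2}$ at the cost of applying the operator $\mathcal{L}^{2}$, with $\mathcal{L}f(\rho)=\partial_{\rho}(\rho^{-(s_{0}-1)}f(\rho))$, to the amplitude $\rho^{r'+\alpha s_{0}}\tilde u(0,\rho)$. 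The boundary terms at $\rho=0,+\infty$ vanish because $\tilde u(0,\cdot)$ decays exponentially (by Lemma \ref{Dul} with $\alpha=\beta=\gamma=0$) and because the resulting weight $\rho^{r'+\alpha s_{0}-j(s_{0}-1)}$ vanishes at the origin, provided $r'$ is taken sufficiently large.

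After this reduction the problem becomes to estimate, for $j+k\leq 2$, a finite sum of integrals of the form
\begin{equation*}
(\text{poly in }\alpha)\cdot\int_{0}^{+\infty}\rho^{r'+\alpha s_{0}-j(s_{0}-1)-k}\,\partial_{\rho}^{k}\tilde u(0,\rho)\,d\rho.
\end{equation*}
Using Leibniz to expand $\partial_{\rho}^{k}\tilde u(0,\rho)=\sum_{\ell}\sum_{\mu}\binom{k}{\mu}\partial_{\rho}^{k-\mu}u_{\ell}(0,\rho)\,\omega_{\ell}^{(\mu)}(\rho)$, bounding each $u_{\ell}$-factor by Lemma \ref{Dul}, each $\omega_{\ell}^{(\mu)}$-factor by Lemma \ref{lemma:cutoff}, and exploiting the support inclusion $\mathrm{supp}\,\omega_{\ell}\subset\{\rho\geq 2R(\ell+1)\}$ together with the two-sided localization $\rho\in[2R(\ell+1),4R(\ell+1)]$ of $\omega_{\ell}^{(\mu)}$ for $\mu\geq 1$ (exactly as in the proof of Proposition \ref{gamma}), the inner sum over $\ell$ yields an integrand of the form $(\text{poly in }\rho,\alpha)\cdot\rho^{-c\rho}\,e^{-c_{0}\rho}$. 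Lemma \ref{L-log-Est} then converts the combination of the sub-exponential decay $\rho^{-c\rho}$ and the polynomial factor $\rho^{r'+\alpha s_{0}+O(1)}$ into the desired Beurling bound $C_{\epsilon}\epsilon^{\alpha}\alpha!^{s_{0}}$.

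The main obstacle is precisely the extraction of the super-exponential factor $\rho^{-c\rho}=e^{-c\rho\log\rho}$ for the portion of $\partial_{\rho}^{k}\tilde u(0,\rho)$ coming from $\mu=0$, i.e.\ where no derivative falls on $\omega_{\ell}$. For this portion only the one-sided bound $\ell+1\leq\rho/(2R)$ is available, and the two-sided estimate used in Proposition \ref{gamma} does not apply directly. The key point is to combine the bound $(\ell+1)^{\ell(2m-1)/(2m)}\rho^{-\ell}\leq(2R)^{-\ell(2m-1)/(2m)}\rho^{-\ell/(2m)}$ from Lemma \ref{Dul} with an optimization over $\ell$: at the maximal admissible $\ell_{*}(\rho)\sim\rho/(2R)$, the factor $\rho^{-\ell_{*}/(2m)}=\rho^{-\rho/(4mR)}$ provides exactly the $e^{-c\rho\log\rho}$ decay needed, while for smaller $\ell$ the summation is controlled geometrically by choosing $R$ large enough. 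The rest of the argument (bookkeeping with Leibniz and Stirling) is then routine.
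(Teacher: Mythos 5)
The central gap is in the extraction of the super-exponential factor $e^{-c\rho\log\rho}$, which your sketch treats as available but which in fact fails for $\partial_{\rho}^{k}\tilde u(0,\rho)$ when no derivative falls on the cutoffs. The function $\tilde u(0,\rho)=\sum_{\ell\geq 0}u_{\ell}(0,\rho)\,\omega_{\ell}(\rho)$ contains the $\ell=0$ term $u_{0}(0,\rho)\,\omega_{0}(\rho)$, and for $\rho>4R$ this equals $v_{0}(0)\,e^{-c_{1}\rho}$, which decays only exponentially. More generally, for fixed $\rho$ the sum over $\ell$ runs over $0\leq\ell\lesssim\rho/(2R)$, and after bounding by Lemma \ref{Dul} the $\ell$-th term is $\lesssim C_{u}^{\ell+1}(2R)^{-\ell(2m-1)/(2m)}\rho^{-\ell/(2m)}e^{-c_{0}\rho}$; this is a \emph{decreasing} sequence in $\ell$ (for $R$ large), so its \emph{sum} is controlled by the $\ell=0$ term $\sim e^{-c_{0}\rho}$, not by the last term at $\ell_{*}\sim\rho/(2R)$. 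The super-exponential gain in the proof of Proposition \ref{gamma} relies crucially on the \emph{two-sided} support localization $\rho\in[2R(\ell+1-2m),4R(\ell+1)]$ that holds for $\partial_{\rho}^{\gamma}\omega_{\ell}$ (with $\gamma\geq 1$) and for the commutator terms, which forces $\ell\gtrsim\rho/R$ on the support; this is not true for the undifferentiated $\omega_{\ell}$, and hence not for $\tilde u(0,\rho)$ itself. Without that gain, $\int_{0}^{+\infty}\rho^{\alpha s_{0}+O(1)}e^{-c_{0}\rho}\,d\rho\sim C^{\alpha}\alpha!^{s_{0}}$ with a \emph{fixed} $C>0$, which places $\mathscr{K}[\tilde u](0,\cdot)$ in $G^{s_{0}}$ but not in the Beurling class $\gamma_{g}^{s_{0}}$, and the $\epsilon^{\alpha}$ in \eqref{y2daK} is exactly what is needed for Lemma \ref{Fourier1} and the contradiction with Lemma \ref{Fourier2}. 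Applying Lemma \ref{L-log-Est} presupposes the $e^{-a\rho\log\rho}$ factor that is not there.

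The paper sidesteps this entirely by a different mechanism: instead of only two integrations by parts to absorb $y^{2}$, it integrates by parts $\beta+2=\alpha$ times, which \emph{cancels} the $\rho^{\alpha s_{0}}$ factor exactly, reducing the $\rho$-power to $r+k$ with $k\leq\alpha$. The $\alpha$-dependence then enters only through the Pochhammer coefficients $(r+\alpha s_{0})_{h-k}$ and the $G^{\sigma}$ derivative bounds for $\omega_{\ell}$; after the bookkeeping one obtains $\alpha!^{1+1/(2m)}$, and since $1+\tfrac{1}{2m}<\tfrac{2m}{2m-1}=s_{0}$ the Beurling bound follows from the strict Gevrey inclusion $G^{s_{1}}\subset\gamma^{s_{2}}$ of Lemma \ref{Ge_vs_Ga}. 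In short, the paper produces the $\epsilon^{\alpha}$ by showing membership in a \emph{strictly smaller} Gevrey class, not by super-exponential decay of the amplitude. If you want to salvage your approach, you would have to perform order-$\alpha$ integrations by parts (not just two) before applying the amplitude estimates, which is exactly the route the paper takes.
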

Let us show first that Proposition \ref{Fourier} allows us to prove
Theorem \ref{Th}.
\begin{lemma}
\label{Fourier1}
For any $ \epsilon > 0 $ there exists a $ C_{\epsilon} > 0 $, $ B > 0 $, such that
\begin{equation}
\label{FK}
| \mathscr{F} \big(\mathscr{K}[\tilde{u}]\big) (0, \eta) | \leq C_{\epsilon}
e^{- B \left( \frac{|\eta|}{\epsilon}\right)^{\frac{1}{s_{0}}} } .
\end{equation}
\end{lemma}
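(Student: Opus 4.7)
\textbf{Proof plan for Lemma \ref{Fourier1}.} The plan is to deduce \eqref{FK} from Proposition \ref{Fourier} by the classical Paley--Wiener-type principle: if the $\alpha$-th derivative of an $L^{1}$ function has norm growing like $\epsilon^{\alpha}\alpha!^{s_{0}}$ for every $\epsilon > 0$, then its Fourier transform decays like $\exp(-B |\eta|^{1/s_{0}})$.

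First, I would upgrade the pointwise estimate \eqref{y2daK} to an $L^{1}(\mathbb{R})$ bound on the line. Since $\int_{\mathbb{R}} \langle y \rangle^{-2}\, dy = \pi$, estimate \eqref{y2daK} gives, for every $\alpha \in \mathbb{N}$ and every $\epsilon > 0$,
\begin{equation*}
\| \partial_{y}^{\alpha} \mathscr{K}[\tilde u](0,\cdot) \|_{L^{1}(\mathbb{R})} \le \pi\, C_{\epsilon}\, \epsilon^{\alpha}\, \alpha!^{s_{0}}.
\end{equation*}

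Second, I would combine this with the elementary inequality $|\eta|^{\alpha} |\hat g(\eta)| \le \|\partial_{y}^{\alpha} g\|_{L^{1}(\mathbb{R})}$ applied to $g(y) = \mathscr{K}[\tilde u](0,y)$, obtaining
\begin{equation*}
|\mathscr{F}(\mathscr{K}[\tilde u])(0,\eta)| \le \pi\, C_{\epsilon}\, \Bigl( \frac{\epsilon}{|\eta|} \Bigr)^{\alpha} \alpha!^{s_{0}}, \qquad \alpha \in \mathbb{N},\ \eta \ne 0.
\end{equation*}
Since this is valid for every nonnegative integer $\alpha$, I can now optimize over $\alpha$.

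Third, I would carry out the optimization using Stirling: writing $\alpha!^{s_{0}} \le C_{0}^{\alpha+1} (\alpha/e)^{\alpha s_{0}}$ and setting $\alpha = \lfloor (|\eta|/(e C_{0}^{s_{0}} \epsilon))^{1/s_{0}} \rfloor$, the right-hand side above becomes, for $|\eta|$ large enough,
\begin{equation*}
|\mathscr{F}(\mathscr{K}[\tilde u])(0,\eta)| \le C'_{\epsilon}\, \exp\!\Bigl( - s_{0} \bigl( |\eta| / (e C_{0}^{s_{0}}\epsilon) \bigr)^{1/s_{0}} \Bigr).
\end{equation*}
After renaming $\epsilon$ (to absorb the harmless factors $e C_{0}^{s_{0}}$ into a new arbitrary positive constant, which is legitimate because \eqref{y2daK} holds for every $\epsilon > 0$) and choosing $B = s_{0}$, one obtains exactly \eqref{FK}; for bounded $|\eta|$ the estimate is trivial by enlarging $C_{\epsilon}$.

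No step is really an obstacle: the nontrivial content is entirely contained in Proposition \ref{Fourier}, and what remains here is the standard Gevrey $\leftrightarrow$ Fourier decay dictionary. The only point requiring mild care is the $\epsilon$-bookkeeping in the last step, so that the constants $C_{0}$ and $e$ introduced by Stirling are absorbed by a harmless redefinition of $\epsilon$, compatible with the "for every $\epsilon > 0$" quantifier inherited from Proposition \ref{Fourier}.
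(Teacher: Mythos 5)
Your proposal is correct and follows essentially the same path as the paper's proof: both start from Proposition \ref{Fourier} (used through the $\langle y\rangle^{-2}$-weighted $L^{1}$ integral) to reach $|\mathscr{F}(\mathscr{K}[\tilde u])(0,\eta)| \leq C'_{\epsilon}\,\epsilon^{\alpha}\,\alpha!^{s_{0}}\,|\eta|^{-\alpha}$ for every $\alpha$, and then extract exponential decay of order $|\eta|^{1/s_{0}}$. The only difference is at the very end: you optimize the free parameter $\alpha$ directly via Stirling's formula, whereas the paper raises the inequality to the power $1/s_{0}$, multiplies by $(|\eta|/2\epsilon)^{\alpha/s_{0}}/\alpha!$, and sums the resulting exponential series in $\alpha$; both routes yield the same decay rate, so this is a purely cosmetic choice.
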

\begin{proof}
Since
\begin{multline*}
| \mathscr{F} \big(\mathscr{K}[\tilde{u}]\big) (0, \eta) | \leq
\frac{1}{|\eta|^{\alpha}} \int_{\mathbb{R}}  |\langle y \rangle^{2} \partial^{\alpha}_{y}
\mathscr{K}[\tilde{u}](0, y) | \frac{1}{\langle y \rangle^{2}} dy
\\
\leq
C_{\epsilon} \epsilon^{\alpha} \alpha!^{s_{0}} |\eta|^{-\alpha}
\int_{\mathbb{R}} \frac{1}{\langle y \rangle^{2}} dy
\leq
C'_{\epsilon} \epsilon^{\alpha} \alpha!^{s_{0}} |\eta|^{-\alpha} .
\end{multline*}
Hence
$$ 
| \mathscr{F} \big(\mathscr{K}[\tilde{u}]\big) (0, \eta)
|^{\frac{1}{s_{0}}} \left(
  \frac{|\eta|}{2\epsilon}\right)^{\frac{\alpha}{s_{0}}}
\frac{1}{\alpha!} \leq C''_{\epsilon}
\left(\frac{1}{2}\right)^{\frac{\alpha}{s_{0}}} ,  
$$
so that, summing over $ \alpha $ we finally obtain
$$ 
| \mathscr{F} \big(\mathscr{K}[\tilde{u}]\big) (0, \eta)
|^{\frac{1}{s_{0}}}
e^{\left(\frac{|\eta|}{2\epsilon}\right)^{\frac{1}{s_{0}}}} \leq
C'''_{\epsilon} ,
$$
from which we conclude the proof of the lemma.
\end{proof}
We need also a bound from below of the same quantity of Lemma
\ref{Fourier1}.
\begin{lemma}
\label{Fourier2}
There are constants $ M > 0 $, $ \mu \in \mathbb{R} $ such that for $
\eta \geq 4 R $, $ R $ large enough, we have
\begin{equation}
\label{FK2}
| \mathscr{F} \big(\mathscr{K}[\tilde{u}]\big) (0, \eta) | \geq M
|\eta|^{\mu} e^{-c_{0} |\eta|^{\frac{1}{s_{0}}}} .
\end{equation}
\end{lemma}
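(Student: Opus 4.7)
The plan is to reduce the Fourier integral to a pointwise lower bound on $\tilde{u}(0,\rho)$ for large $\rho$, by exploiting the explicit form of the phase $\rho^{s_{0}}$ and then showing that the leading term $u_{0}(0,\rho)$ dominates the full series.

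First, in the formula \eqref{Kutilde} evaluated at $x=0$, I would perform the change of variable $\xi = \rho^{s_{0}}$, obtaining
\begin{equation*}
\mathscr{K}[\tilde{u}](0, y) = \frac{1}{s_{0}}\int_{0}^{+\infty}e^{iy\xi}\,\xi^{(r'+1)/s_{0}-1}\,\tilde{u}(0,\xi^{1/s_{0}})\,d\xi.
\end{equation*}
Taking the $y$-Fourier transform produces a Dirac mass $\delta(\xi-\eta)$, so for $\eta>0$
\begin{equation*}
\mathscr{F}\bigl(\mathscr{K}[\tilde{u}]\bigr)(0,\eta) = \frac{2\pi}{s_{0}}\,\eta^{\mu}\,\tilde{u}(0,\eta^{1/s_{0}}),\qquad \mu = \tfrac{r'+1}{s_{0}}-1.
\end{equation*}
Thus the lemma reduces to the pointwise lower bound $|\tilde{u}(0,\rho)|\geq c\,e^{-c_{0}\rho}$ for $\rho = \eta^{1/s_{0}}$ sufficiently large.

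Next, I would isolate the leading term in $\tilde{u}(0,\rho)=\sum_{\ell\geq 0}u_{\ell}(0,\rho)\omega_{\ell}(\rho)$. Once $\rho\geq 4R_{1}$ the cutoff $\omega_{0}$ is identically $1$, so from \eqref{u0}
\begin{equation*}
u_{0}(0,\rho) = v_{0}(0)\,e^{-c_{1}\rho},\qquad |u_{0}(0,\rho)| = |v_{0}(0)|\,e^{-c_{0}\rho},
\end{equation*}
where the fact that $v_{0}(0)\neq 0$ follows from the explicit description of the ground state eigenfunction $v_{0}$ recalled in the Appendix (it is a positive Gel'fand--Shilov function).

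The heart of the argument is then controlling the tail $\sum_{\ell\geq 1}u_{\ell}(0,\rho)\omega_{\ell}(\rho)$. Using Lemma \ref{lemma:glp} with $k=0$ and the Gel'fand--Shilov bound for $|v_{p}(0)|$ (Theorem \ref{G-S_Char}), for each $\ell\geq 1$
\begin{equation*}
|u_{\ell}(0,\rho)|\leq C^{\ell+1}(\ell+1)^{\ell(1-1/2m)}\rho^{-\ell}\,e^{-c_{0}\rho}.
\end{equation*}
On the support of $\omega_{\ell}$ one has $\ell+1\leq \rho/(2R_{1})$, so
\begin{equation*}
(\ell+1)^{\ell(1-1/2m)}\rho^{-\ell}\leq (2R_{1})^{-\ell(1-1/2m)}\,\rho^{-\ell/(2m)},
\end{equation*}
and choosing $R_{1}$ large enough the geometric factor $C(2R_{1})^{-(1-1/2m)}$ is strictly less than $1$. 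Summing the resulting geometric series gives
\begin{equation*}
\Bigl|\sum_{\ell\geq 1}u_{\ell}(0,\rho)\omega_{\ell}(\rho)\Bigr|\leq C'\,\rho^{-1/(2m)}\,e^{-c_{0}\rho}.
\end{equation*}
Combining this with the lower bound on $|u_{0}(0,\rho)|$ yields $|\tilde{u}(0,\rho)|\geq \tfrac{1}{2}|v_{0}(0)|\,e^{-c_{0}\rho}$ for all $\rho$ larger than some threshold, and substituting $\rho = \eta^{1/s_{0}}$ into the Fourier formula produces \eqref{FK2} with $M = \pi|v_{0}(0)|/s_{0}$ (up to a positive power of $\eta$ absorbed into $|\eta|^{\mu}$).

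The principal technical obstacle will be the tail estimate: one must verify that the powers of $\ell+1$ arising from Lemma \ref{lemma:glp} are truly beaten by the negative powers of $\rho$ on the support of $\omega_{\ell}$, and that the constants depending on the Gel'fand--Shilov norms of the $v_{p}$ can be absorbed into $C^{\ell+1}$. The choice of $R_{1}$ large enough to force geometric summability of the tail is what underlies the phrase ``$R$ large enough'' in the statement.
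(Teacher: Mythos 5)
Your proposal is correct and follows essentially the same route as the paper: both pass to $\mathscr{F}\bigl(\mathscr{K}[\tilde{u}]\bigr)(0,\eta) = \frac{2\pi}{s_{0}}\eta^{\mu}\tilde{u}(0,\eta^{1/s_{0}})$ by the substitution $\xi=\rho^{s_{0}}$, isolate the $\ell=0$ term $v_{0}(0)e^{-c_{0}\rho}$, and control the tail $\sum_{\ell\geq 1}u_{\ell}(0,\rho)\omega_{\ell}(\rho)$ using the estimate $|u_{\ell}(0,\rho)|\leq C_{u}^{\ell+1}(\ell+1)^{\ell(1-1/2m)}\rho^{-\ell}e^{-c_{0}\rho}$ (the paper invokes Lemma \ref{Dul} directly, you reproduce its content from Lemma \ref{lemma:glp} plus Theorem \ref{G-S_Char}) together with the lower bound $\rho\geq 2R(\ell+1)$ on $\supp\omega_{\ell}$, and finally choose $R$ large so the geometric-type sum is dominated by $|v_{0}(0)|$.
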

\begin{proof}
Since
$$ 
\mathscr{K}[\tilde{u}](0, y) = \int_{0}^{+\infty} e^{i y \rho^{s_{0}}}
\rho^{r} \tilde{u}(0, \rho) d\rho = \frac{1}{s_{0}}  \int_{-\infty}^{+\infty} e^{i y \eta}
\eta^{\frac{r+1}{s_{0}} -1} \tilde{u}(0, \eta^{\frac{1}{s_{0}}}) d\eta ,
$$
we have that
$$ 
\mathscr{F} \big(\mathscr{K}[\tilde{u}]\big) (0, \eta) =
\frac{2\pi}{s_{0}} \eta^{\frac{r+1}{s_{0}} -1} \tilde{u}(0,
\eta^{\frac{1}{s_{0}}}) .
$$
Then, using Lemma \ref{Dul} we may write that
\begin{multline*}
| \mathscr{F} \big(\mathscr{K}[\tilde{u}]\big) (0, \eta) |
\geq
\frac{2\pi}{s_{0}} |\eta|^{\frac{\Re r+1}{s_{0}} -1} \Big(
\omega_{0}(\eta^{\frac{1}{s_{0}}}) v_{0}(0) e^{-c_{0}
  |\eta|^{\frac{1}{s_{0}}}}
- \sum_{\ell \geq 1} \omega_{\ell}(\eta^{\frac{1}{s_{0}}})
|u_{\ell}(0, \eta^{\frac{1}{s_{0}}}) | \Big)
\\
=
\frac{2\pi}{s_{0}} |\eta|^{\frac{\Re r+1}{s_{0}} -1} \Big(
\omega_{0}(\eta^{\frac{1}{s_{0}}}) v_{0}(0) e^{-c_{0}
  |\eta|^{\frac{1}{s_{0}}}}
- \sum_{\ell \geq 1} \omega_{\ell}(\eta^{\frac{1}{s_{0}}}) e^{-c_{0}
  |\eta|^{\frac{1}{s_{0}}}}
|u_{\ell}(0, \eta^{\frac{1}{s_{0}}}) | e^{c_{0}
  |\eta|^{\frac{1}{s_{0}}}} \Big)
\\
\geq
\frac{2\pi}{s_{0}} |\eta|^{\frac{\Re r+1}{s_{0}} -1} \Big(
\omega_{0}(\eta^{\frac{1}{s_{0}}}) v_{0}(0) e^{-c_{0}
  |\eta|^{\frac{1}{s_{0}}}}
-
e^{-c_{0} |\eta|^{\frac{1}{s_{0}}}}
\sum_{\ell \geq 1} \omega_{\ell}(\eta^{\frac{1}{s_{0}}}) 
C_{u}^{\ell+1} (\ell+1)^{\frac{\ell}{s_{0}}} |\eta|^{-
  \frac{\ell}{s_{0}}} 
\Big) .
\end{multline*}
On the other hand, on $ \supp \omega_{\ell} $, $
\eta^{\frac{1}{s_{0}}} \geq 2 R (\ell + 1) $, so that the above
inequality becomes, if 
\begin{multline*}
| \mathscr{F} \big(\mathscr{K}[\tilde{u}]\big) (0, \eta) |
\geq
\frac{2\pi}{s_{0}} |\eta|^{\frac{\Re r+1}{s_{0}} -1} \Big(
\omega_{0}(\eta^{\frac{1}{s_{0}}}) v_{0}(0) e^{-c_{0}
  |\eta|^{\frac{1}{s_{0}}}}
\\
-
e^{-c_{0} |\eta|^{\frac{1}{s_{0}}}}
\sum_{\ell \geq 1} \left( \frac{1}{2R}\right)^{\ell}
C_{u}^{\ell+1}  (\ell+1)^{\ell \left(\frac{1}{s_{0}}-1\right)}  
\Big)
\\
\geq
\frac{2\pi}{s_{0}} |\eta|^{\frac{\Re r+1}{s_{0}} -1} e^{-c_{0}
  |\eta|^{\frac{1}{s_{0}}}}  \Big(
\omega_{0}(\eta^{\frac{1}{s_{0}}}) v_{0}(0)
- \frac{C_{u}^{2}}{2R}
\sum_{\ell \geq 0} \left( \frac{C_{u}}{2R}\right)^{\ell}
  (\ell+1)^{(\ell+1) \left(\frac{1}{s_{0}}-1\right)}  
\Big)
\\
\geq
M |\eta|^{\frac{\Re r+1}{s_{0}} -1} e^{-c_{0}
  |\eta|^{\frac{1}{s_{0}}}} ,
\end{multline*}
provided $ |\eta|^{\frac{1}{s_{0}}} \geq 4 R $ and $ R $ is large
enough, so that $ v_{0}(0) - C_{u}^{2} (2R)^{-1} S > 0 $, where $ S $
denotes the sum of the series.

This ends the proof of the lemma.
\end{proof}
Lemma \ref{Fourier1} and Lemma \ref{Fourier2} for $ \eta $ large and $
\epsilon $ small give a contradiction. Hence the operator $ M_{n, m} $
is Gevrey $ \frac{2m}{2m-1} $ hypoelliptic and this value is optimal.

To finish the proof we have to show that \eqref{y2daK} holds, or, in
other words, that $ \mathscr{K}[\tilde{u}](0, y) $ belongs to the
global Beurling class $ \gamma_{g}^{\frac{2m}{2m-1}}(\mathbb{R}) $.
\begin{proof}[Proof of Proposition \ref{Fourier}]
Since $ \mathscr{K}[\tilde{u}] \in \gamma^{\frac{2m}{2m-1}}(\Omega) $,
where $ \Omega $ is a neighborhood of the origin in $ \mathbb{R}^{2}
$, we need only to show that $ \mathscr{K}[\tilde{u}](0, y) \in
\gamma_{g}^{\frac{2m}{2m-1}}(\{ |y| \geq \delta\}) $, for a suitable
positive $ \delta $.

Actually we are going to prove \eqref{y2daK} for $ |y| \geq \delta $.

For $ \alpha \in \mathbb{N} $, consider
$$ 
y^{2} \partial_{y}^{\alpha} \mathscr{K}[\tilde{u}](0, y) .
$$
If the factor $ y^{2} $ is missing the argument is the same, hence we
skip it.

Let us compute the above quantity:
$$
y^{2} D_{y}^{\alpha}  \int_{0}^{+\infty} e^{i y \rho^{s_{0}}}
\rho^{r} \tilde{u}(0, \rho) d\rho = y^{2} \int_{0}^{+\infty} e^{i y \rho^{s_{0}}}
\rho^{r + \alpha s_{0}} \tilde{u}(0, \rho) d\rho.
$$
Now
$$ 
e^{i y \rho^{s_{0}}} = \frac{1}{y s_{0} \rho^{s_{0}-1}} D_{\rho} e^{i
  y \rho^{s_{0}}} ,
$$
so that
$$ 
y^{2} D_{y}^{\alpha}  \int_{0}^{+\infty} e^{i y \rho^{s_{0}}}
\rho^{r} \tilde{u}(0, \rho) d\rho =
 y^{2} \int_{0}^{+\infty} e^{i y \rho^{s_{0}}}  \left(- D_{\rho}
   \frac{1}{y s_{0} \rho^{s_{0}-1}} \right)^{\beta}
\rho^{r + \alpha s_{0}} \tilde{u}(0, \rho) d\rho,
$$
where we made $ \beta $ integrations by parts using the fact that $
\tilde{u}(0, \rho) $ is rapidly vanishing at infinity.

As for the factor $ y^{2} $ we transform it into derivatives with
respect to $ \rho $, finally obtaining
\begin{multline*}
y^{2} D_{y}^{\alpha}  \int_{0}^{+\infty} e^{i y \rho^{s_{0}}}
\rho^{r} \tilde{u}(0, \rho) d\rho
\\
=
\int_{0}^{+\infty} e^{i y \rho^{s_{0}}}
\left(- D_{\rho}
   \frac{1}{ s_{0} \rho^{s_{0}-1}} \right)^{2}
\left(- D_{\rho}
   \frac{1}{y s_{0} \rho^{s_{0}-1}} \right)^{\beta}
\rho^{r + \alpha s_{0}} \tilde{u}(0, \rho) d\rho
\\
=
\frac{1}{ y^{\beta} } \int_{0}^{+\infty} e^{i y \rho^{s_{0}}}
 \left( - \partial_{\rho}
   \frac{1}{i s_{0} \rho^{s_{0}-1}} \right)^{\beta+2}
\rho^{r + \alpha s_{0}} \tilde{u}(0, \rho) d\rho ,
\end{multline*}
where $ a $, $ b $, are suitable constants. We use the formula
\begin{equation}
\label{ff}
\left( - \partial_{\rho} \frac{1}{i s_{0} \rho^{s_{0}-1}}
\right)^{\beta+2} = \sum_{h=0}^{\beta+2} \gamma_{\beta+2, h}
\frac{1}{\rho^{s_{0} (\beta+2) - h}} \partial_{\rho}^{h},
\end{equation}
where
\begin{equation}
\label{ff1}
|\gamma_{\beta+2, h} | \leq C_{\gamma}'^{\beta+2+h} \frac{(\beta+2)!}{h!} \leq
C_{\gamma}^{\beta+2+h} (\beta+2 - h)!.
\end{equation}
Here $ C'_{\gamma} $, $ C_{\gamma} $ are positive constants
independent of $ \beta $, $ h $. In particular we have $ \gamma_{\beta+2,
\beta+2} = \left( \frac{i}{s_{0}}\right)^{\beta+2} $, and for convenience
we set $ \gamma_{0 0} = 1 $. Thus the above integral becomes
$$ 
\frac{1}{ y^{\beta} } \sum_{h=0}^{\beta+2} \gamma_{\beta+2, h}  \int_{0}^{+\infty} e^{i y \rho^{s_{0}}}
\frac{1}{\rho^{s_{0} (\beta+2) - h}} \partial_{\rho}^{h} \left(
\rho^{r + \alpha s_{0}} \tilde{u}(0, \rho)\right)  d\rho . 
$$
We use the formula
\begin{equation}
\label{(.)_} 
\partial_{\rho}^{p} \left( \rho^{\lambda} u \right) =
\sum_{k = 0}^{p} \binom{p}{k}
(\lambda)_{p-k} \rho^{\lambda - p + k} \partial_{\rho}^{k} u,
\end{equation}
where $ (\lambda)_{\beta} $ is the Pochhammer symbol defined by
\begin{equation}
\label{eq:lambdabeta} 
(\lambda)_{\beta} = \lambda (\lambda -1) \cdots (\lambda - \beta + 1),
\quad (\lambda)_{0} = 1,
\quad \lambda \in \mathbb{C}.
\end{equation}
Then the above integral becomes
\begin{equation}
  \label{int1}
\frac{1}{ y^{\beta} } \sum_{h=0}^{\beta+2} \sum_{k = 0}^{h} \binom{h}{k}
(r+\alpha s_{0})_{h-k} \gamma_{\beta+2, h}  \int_{0}^{+\infty} e^{i y \rho^{s_{0}}}
\rho^{r+\alpha s_{0} + k -s_{0}(\beta+2)} \partial_{\rho}^{k}
\tilde{u}(0, \rho)  d\rho  . 
\end{equation}
Consider now
$$ 
\partial_{\rho}^{k} \tilde{u}(0, \rho) = \sum_{\ell \geq 0}
\partial_{\rho}^{k} 
\left(u_{\ell}(0, \rho) \omega_{\ell}(\rho) \right) = \sum_{\ell \geq
  0} \sum_{i=0}^{k} \binom{k}{i} \partial_{\rho}^{i} u_{\ell}(0, \rho)
\partial_{\rho}^{k-i} \omega_{\ell}(\rho).
$$
by \eqref{utilde}. The absolute value of the quantity in \eqref{int1}
for $ |y| \geq \delta $ can be estimated, applying Lemma
\ref{lemma:cutoff} and Lemma \ref{Dul}, by
\begin{multline*}
  \delta^{-\beta} \sum_{h=0}^{\beta+2} \sum_{k = 0}^{h} \sum_{\ell \geq
  0} \sum_{i=0}^{k} \binom{k}{i} \binom{h}{k}
| (r+\alpha s_{0})_{h-k}|  |\gamma_{\beta+2, h}|
\\
\cdot
\int_{0}^{+\infty}
  \rho^{r+\alpha s_{0} + k -s_{0}(\beta+2)} | \partial_{\rho}^{i}
  u_{\ell}(0, \rho) | |\partial_{\rho}^{k-i} \omega_{\ell}(\rho)| d
  \rho
\\
\leq
\delta^{-\beta} \sum_{h=0}^{\beta+2} \sum_{k = 0}^{h} \sum_{\ell \geq
  0} \sum_{i=0}^{k} C_{\gamma}'^{\beta+2-h} \binom{k}{i} \binom{h}{k}
| (r+\alpha s_{0})_{h-k}| \frac{(\beta+2)!}{h!}
\\
\cdot
(R C_{\omega})^{k-i+1} (k-i)!^{\sigma} 
C_{u}^{\ell+i+1} (\ell+1)^{\frac{\ell}{s_{0}} + \frac{i}{2m}}
\int_{2 R(\ell+1)}^{+\infty} \rho^{r+\alpha s_{0} + i -s_{0}(\beta+2) -\ell }
e^{-c_{0} \rho} d\rho 
\end{multline*}
Choose $ \beta + 2 = \alpha $. There is no problem in assuming that $
\alpha > 2 $, since we are interested in large values of $ \alpha
$. Furthermore on the domain of integration we have that
$$ 
\frac{1}{\rho} \leq \frac{1}{2 R (\ell+1)},
$$
so that the quantity on the right hand side of the above inequality is
bounded by
\begin{multline*}
\delta^{-\alpha+2} C^{\alpha} \sum_{h=0}^{\alpha} \sum_{k = 0}^{h} \sum_{\ell \geq
  0}  \sum_{i=0}^{k} C^{-h+k+1}  \left(\frac{C}{R}\right)^{\ell} \frac{\alpha!}{i!}
\frac{| (r+\alpha s_{0})_{h-k}|}{(h-k)!} (k-i)!^{\sigma-1} 
\\
\cdot
R^{k-i+1 - \frac{i}{2m}}  (\ell+1)^{\ell\left( \frac{1}{s_{0}} -1\right)}
\int_{2 R(\ell+1)}^{+\infty} \rho^{r + i \left( 1 + \frac{1}{2m}\right)}
e^{-c_{0} \rho} d\rho .
\end{multline*}
The sum over $ \ell \geq 0 $ yields a constant independent of $ \alpha
$:
\begin{multline*}
\delta^{-\alpha+2} C^{\alpha+1} \sum_{h=0}^{\alpha} \sum_{k = 0}^{h}  \sum_{i=0}^{k} C^{-h+k+1} \frac{\alpha!}{i!}
\frac{| (r+\alpha s_{0})_{h-k}|}{(h-k)!} (k-i)!^{\sigma-1} 
\\
\cdot
R^{k-i+1 - \frac{i}{2m}} 
\int_{0}^{+\infty} \rho^{r + i \left( 1 + \frac{1}{2m}\right)}
e^{-c_{0} \rho} d\rho .
\end{multline*}
Now
\begin{multline*}
\frac{| (r+\alpha s_{0})_{h-k}|}{(h-k)!} =
\frac{|r+\alpha s_{0}| \ |r+\alpha s_{0} - 1| \ldots | r+\alpha s_{0}
  - h+k+1| }{(h-k)!}
\\
\leq
\frac{ (|r| + \alpha s_{0} ) (|r| + \alpha s_{0} -1) \ldots (|r| +
  \alpha s_{0} - (h-k) + 1)}{(h-k)!}
\\
=
s_{0}^{h-k} \frac{\left( \frac{|r|}{s_{0}} + \alpha\right) \left(
    \frac{|r|}{s_{0}} + \alpha - \frac{1}{s_{0}}\right) \ldots
 \left( \frac{|r|}{s_{0}} + \alpha - \frac{h - k + 1}{s_{0}}\right)
}{(h-k)!}
\leq
s_{0}^{h-k} \frac{\left( \frac{|r|}{s_{0}} + \alpha
  \right)^{h-k}}{(h-k)!}
\\
\leq
C_{0}^{h-k} \frac{\alpha^{h-k}}{(h-k)!} = C_{0}^{h-k}  \frac{\alpha!}{(\alpha -
  (h-k))!} \frac{\alpha^{h-k}}{\alpha!} \frac{(\alpha -
  (h-k))!}{(h-k)!}
\\
=
C_{0}^{h-k}
\binom{\alpha}{h-k} (\alpha - (h-k))! \frac{\alpha^{h-k}}{\alpha!}
\leq
C_{1}^{\alpha} \frac{(\alpha - (h-k))!}{\alpha^{\alpha - (h-k)}} \leq
C_{2}^{\alpha} . 
\end{multline*}
Here we assumed, without loss of generality, that $ \alpha \geq
\frac{|r|}{s_{0}} $ and we also used the estimate $ n! \geq C^{n}
n^{n} $. Plugging this into the above expression we get
\begin{multline*}
\delta^{-\alpha+2} C^{\alpha+1} \sum_{h=0}^{\alpha} \sum_{k = 0}^{h}  \sum_{i=0}^{k} C^{-h+k+1} \frac{\alpha!}{i!} (k-i)!^{\sigma-1} 
\\
\cdot
R^{k-i+1 - \frac{i}{2m}} 
\int_{0}^{+\infty} \rho^{r + i \left( 1 + \frac{1}{2m}\right)}
e^{-c_{0} \rho} d\rho ,
\end{multline*}
with a different meaning of the constant involved. As a consequence
the above quantity can be further estimated as
$$ 
\delta^{-\alpha+2} C^{\alpha+1} \sum_{h=0}^{\alpha} \sum_{k = 0}^{h}
\sum_{i=0}^{k} C^{-h+k+1} \frac{\alpha!}{i!} (k-i)!^{\sigma-1}
R^{k-i+1 - \frac{i}{2m}} M^{i} i!^{1+\frac{1}{2m}} . 
$$
If we choose $ \sigma $ such that $ \sigma - 1 = \frac{1}{2m} $, we
have
$$ 
\delta^{-\alpha+2} C^{\alpha+1} \sum_{h=0}^{\alpha} \sum_{k = 0}^{h}
\sum_{i=0}^{k} C^{-h+k+1} \alpha! 
R^{k-i+1 - \frac{i}{2m}} M^{i} k!^{\frac{1}{2m}} \leq
\delta^{-\alpha+2} C_{1}^{\alpha+1} \alpha!^{1 + \frac{1}{2m}} . 
$$
Hence for $ |y| \geq \delta $ we have that $ \mathscr{K}[\tilde{u}](0,
y) $ belongs to $ G^{1 + \frac{1}{2m}}(\{ |y| \geq \delta\}) $ and,
since
$$ 
1 + \frac{1}{2m} < \frac{2m}{2m - 1} = s_{0},
$$
we proved that $ \mathscr{K}[\tilde{u}](0, y) $ belongs to $
\gamma_{g}^{\frac{2m}{2m-1}}(\mathbb{R}) $ and, moreover, each
derivative is $ L^{1} $ summable in the variable $ y $.

\end{proof}
%

% 
%
%%%%%%%%%%%%%%%%%%%%----------Appendix-----------%%%%%%%%%%%%%%%%%%%%%%%%%%%%%%%%%%%%
%

\section{Appendix}
%\renewcommand{\theequation}{\thesection.\arabic{equation}}
%\setcounter{equation}{0} \setcounter{theorem}{0}
%\setcounter{proposition}{0} \setcounter{lemma}{0}
%\setcounter{corollary}{0} \setcounter{definition}{0}
%\setcounter{remark}{1}
%
%
%%%%%%%%%%%%%%%%%%%%----A-1-Eigenvalue_problem----%%%%%%%%%%%%%%%%%%%%%%%%%%%%%%%%%%%
%
In this appendix we collect some results on the eigenfunctions of the
Bender and Wang operator. We also include here some estimates we use in the
preceding sections.

\subsection{On a special eigenvalue problem}
\renewcommand{\theequation}{\thesubsection.\arabic{equation}}
\renewcommand{\thetheorem}{\thesubsection.\arabic{theorem}}
\renewcommand{\theproposition}{\thesubsection.\arabic{proposition}}
\renewcommand{\thelemma}{\thesubsection.\arabic{lemma}}
\renewcommand{\theremark}{\thesubsection.\arabic{remark}}
\renewcommand{\thedefinition}{\thesubsection.\arabic{definition}}
\setcounter{equation}{0} \setcounter{theorem}{0}
In \cite{Bender2001}, Bender and Wang study the eigenvalue problem
\begin{align*}
\left(-\partial_{t}^{2} + t^{2N+2}\right) u(t)= E t^{N}u(t), \quad N=-1,0,1,2,\dots,
\end{align*}
on the interval $-\infty \leq t \leq +\infty$. The eigenfunction $u(t)$ is required to obey the boundary
conditions that $u(t)$ vanish exponentially rapidly as $t\rightarrow \pm \infty$.\\
In this paragraph we focus on the case $N=2n+1$, $n \in \mathbb{Z}_{+}$,  
\begin{align}\label{Eig_P}
\left(-\partial_{t}^{2} + t^{2(2n+1)}\right) u(t)= E t^{2n}u(t).
\end{align}
In \cite{Bender2001} the authors show that the even-parity eigenfunctions have the form
\begin{align}\label{Ev_Eigf}
v_{k}(t) = e^{-\frac{t^{2n+2}}{2n+2}} L^{(-1/(2n+2))}_{k}\left(\frac{t^{2n+2}}{n+1}\right), \quad k\in \mathbb{Z}_{+}, 
\end{align}
%
%where
%\begin{align*}
%L^{(-1/(2n+2))}_{k}\left(\frac{t^{2n+2}}{n+1}\right)=\sum_{i=0}^{k} (-1)^{k} \binom{k-\frac{1}{2n+2}}{k-i} \frac{t^{2i(n+1)}}{(n+1)^{i}i!}\,\,,
%\end{align*}
%
with corresponding eigenvalue 
\begin{align*}
E_{k} = 4k(n+1)+2n+1;
\end{align*} 
and the odd-parity eigenfunctions have the form
\begin{align}\label{Odd_Eigf}
w_{k}(t) = e^{-\frac{t^{2n+2}}{2n+2}} \, t \,L^{(1/(2n+2))}_{k}\left(\frac{t^{2n+2}}{n+1}\right),\quad k\in \mathbb{Z}_{+} 
\end{align}
%
%where
%\begin{align*}
%L^{(1/(2n+2))}_{k}\left(\frac{t^{2n+2}}{n+1}\right)=\sum_{i=0}^{k} (-1)^{k} \binom{k+\frac{1}{2n+2}}{k-i} \frac{t^{2i(n+1)}}{(n+1)^{i}i!}\,\,.
%\end{align*}
%
with corresponding eigenvalue 
\begin{align*}
\tilde{E}_{k}= 4k(n+1)+2n+3.
\end{align*} 
$L_{k}^{(\alpha)}(\cdot)$, $\alpha = \pm1/(2n+2)$, are the generalized \textit{Laguerre polynomials} given by
\begin{align*}
L^{(\alpha)}_{k}\left(\frac{t^{2n+2}}{n+1}\right)=\sum_{i=0}^{k} (-1)^{k} \binom{k+\alpha}{k-i} \frac{t^{2i(n+1)}}{(n+1)^{i}i!}\,.
\end{align*}
In the next Proposition we list some properties of the
\textit{Laguerre polynomials} that we use in what follows:
\begin{proposition}\label{Pr_Lag_Pol}
	The following properties concerning the generalized Laguerre polynomials hold:
	\begin{enumerate}
		\item[(L-1)]  let $k\in \mathbb{Z}_{+}$ and $\alpha\geq -1/2$ then
		\begin{align}\label{Lag_p_1}
		(k+1) L_{k+1}^{\alpha} (s) - \left(2k+1-\alpha -s\right)L_{k}^{\alpha}(s)+(k+\alpha)L_{k-1}^{\alpha}(s)=0,
		\end{align}
		and
		\begin{align}\label{Lag_p_2}
		s\frac{d}{ds} L_{k}^{\alpha}(s) = kL_{k}^{\alpha}(s) - (k+\alpha)L_{k-1}^{\alpha}(s);
		\end{align}
		\item[(L-2)]  let $k, q \in \mathbb{Z}_{+}$ and $\alpha > -1$ then
		\begin{align}\label{Orth_Lag_p}
		\int_{0}^{+\infty} e^{-s} s^{\alpha} L_{k}^{\alpha}(s) L_{q}^{\alpha}(s) ds = \Gamma(\alpha +1) \binom{k+\alpha}{k} \delta_{k,q}, 
		\end{align}
		where $\delta_{k,q}$ is the Kroneker symbol;
		\item[(L-3)] due to the Theorem 6.23, \cite{Szego}, if $\alpha > -1$ and $k\in \mathbb{Z}_{+}$ then $L_{k}^{\alpha}(s)$ has $k$ positive zeros.
	\end{enumerate}
\end{proposition}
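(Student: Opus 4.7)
The plan is to rely on classical facts from the theory of orthogonal polynomials, since all three assertions are standard properties of generalized Laguerre polynomials that are well documented in Szeg\H{o}'s monograph. I outline below how I would organize the argument.

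For (L-1), the cleanest route is via the generating function
\[
\sum_{k=0}^{\infty} L_k^{(\alpha)}(s)\, t^k = (1-t)^{-\alpha-1} \exp\!\left(-\frac{st}{1-t}\right),\qquad |t|<1.
\]
Differentiating both sides with respect to $t$ and clearing denominators produces a functional equation; matching coefficients of $t^k$ on either side delivers the three-term recurrence \eqref{Lag_p_1}. Differentiating with respect to $s$ and again matching coefficients yields a mixed identity relating $L_k^{(\alpha)}$ and $(L_k^{(\alpha)})'$ to $L_{k-1}^{(\alpha)}$; combining this with \eqref{Lag_p_1} and multiplying by $s$ gives \eqref{Lag_p_2}. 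The only bookkeeping is keeping track of the index shift when one multiplies the series by $t$ or by $1-t$; no convergence issue arises since the identities are between polynomials of fixed degree.

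For (L-2), I would observe that $L_k^{(\alpha)}$ solves the singular Sturm--Liouville problem
\[
\bigl(s^{\alpha+1} e^{-s} y'\bigr)' + k\, s^{\alpha} e^{-s} y = 0,
\]
where the hypothesis $\alpha>-1$ ensures integrability of the weight $e^{-s}s^{\alpha}$ near the origin, while the exponential decay handles the endpoint at $+\infty$. For $k\neq q$ the standard symmetry argument (multiply the equation for $L_k^{(\alpha)}$ by $L_q^{(\alpha)}$, subtract the corresponding equation for $L_q^{(\alpha)}$ multiplied by $L_k^{(\alpha)}$, and integrate by parts) yields orthogonality, the boundary contributions vanishing at both ends. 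To compute the normalization $\Gamma(\alpha+1)\binom{k+\alpha}{k}$ I would integrate the product of two generating functions against the weight, apply Fubini together with the gamma integral, and identify the coefficient of $(tu)^k$ in the resulting expression; an equivalent path is via the Rodrigues formula followed by $k$ successive integrations by parts.

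Finally (L-3) is Theorem 6.23 of \cite{Szego} and I would simply quote it. The underlying principle is the classical oscillation theorem: $L_k^{(\alpha)}$, being the $k$-th eigenfunction of the singular Sturm--Liouville problem above, must have exactly $k$ simple zeros in $(0,+\infty)$, and the hypothesis $\alpha>-1$ is precisely what keeps the singular endpoint $s=0$ manageable. The only nontrivial technical point would be the justification of the oscillation theorem in the singular setting, but since the proposition is a recollection of textbook material used only to support the constructions in the main body, I would not reproduce that argument here.
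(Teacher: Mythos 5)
The paper does not actually prove Proposition \ref{Pr_Lag_Pol}: it is stated without a proof environment, as a recollection of standard facts about the generalized Laguerre polynomials, with an explicit reference to Theorem~6.23 of Szeg\H{o}'s monograph for (L-3) and the others understood as equally classical (they can all be found in the same reference). Your sketch --- generating function for \eqref{Lag_p_1} and \eqref{Lag_p_2}, Sturm--Liouville orthogonality plus either the generating-function or Rodrigues computation for the normalization in \eqref{Orth_Lag_p}, and the oscillation theorem behind (L-3) --- is a correct and standard route to these facts, so there is no error; you have simply supplied more detail than the author chose to. One cosmetic observation: the three-term recurrence \eqref{Lag_p_1} in fact holds for all $\alpha>-1$, so the stated hypothesis $\alpha\geq-1/2$ in (L-1) is not the natural one; neither you nor the paper remarks on this, though it is harmless here since the only values used are $\alpha=\pm\frac{1}{2n+2}$, which satisfy both conditions.
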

\begin{remark}\label{Part_Orth}
	Due to the property (L-3), Proposition \ref{Pr_Lag_Pol}, we have that $v_{k}(t)$, (\ref{Ev_Eigf}), has  $2k$ zeros each one of multiplicity $2n+2$ and $w_{k}(t)$, (\ref{Ev_Eigf}), has  $2k+1$ zeros, $2k$ of them with multiplicity $2n + 2$, $0$ is a zero.\\
	From the property (L-2), Proposition \ref{Pr_Lag_Pol}, we obtain
	\begin{align}\label{Orth-v_k}
	\|t^{n}v_{k}(t)\|^{2}_{0}= \frac{1}{2} \left(\frac{1}{n+1}\right)^{\frac{1}{2n+2}} \Gamma(1-\alpha) \binom{k-\alpha}{k} 
	\end{align}
	and
	\begin{align}\label{Orth-w_k}
	\|t^{n}w_{k}(t)\|^{2}_{0}= \frac{1}{2}  \left(\frac{1}{n+1}\right)^{\frac{1}{2n+2}} \Gamma(1+\alpha) \binom{k+\alpha}{k} 
	\end{align}
	where $\alpha =(2n+2)^{-1}$.
        
	Moreover we have $\langle t^{n}v_{k}, t^{n}v_{m} \rangle =0$ and  $\langle t^{n}w_{k}, t^{n}w_{m} \rangle =0$
	for any $k,m\in \mathbb{Z}_{+}$ with $k\neq m$. From now on we
        shall consider the eigenfunctions, $ v_{k} $, $ w_{k} $, as
        normalized with respect the norms in \eqref{Orth-v_k},
        \eqref{Orth-w_k}. 
\end{remark}
The next lemma gives us a three terms recurrence relation for some
derivative of the eigenfunctions. This relation is crucial for our
computation of the asymptotic solution.
\begin{lemma}\label{Rk-Ric-Rel}
	The following recurrence relation holds
	\begin{align}\label{Rec_Lag_1}
	t\frac{d}{dt} v_{k}(t) = (n+1) \Big(  (k+1) v_{k+1}(t)\! -
          (1+\alpha) v_{k}(t)\! - (k+\alpha) v_{k-1}(t) \Big), \, k\geq 1,
	\end{align}
	and 
	\begin{align}\label{Rec_Lag_0}
	t\frac{d}{dt} v_{0}(t) = (n+1)\left( v_{1}(t)  - (1+\alpha) v_{0}(t)\right),
	\end{align}
	where $\alpha = -\frac{1}{2n+2}$. More in general for every $i,k \in \mathbb{Z}_{+}$ we have
	\begin{align}\label{Rec_Lag_2}
	\left(t\frac{d}{dt}\right)^{i} v_{k}(t) =\sum_{j=-i}^{i} \delta^{k,i}_{j} v_{k+j}(t) = 
	\sum_{j =\max\lbrace k-i,0\rbrace}^{k+i} \delta^{k,i}_{j-k}v_{j}(t).
	\end{align}
	where:
	\begin{enumerate}
			\item[(i)] $
                          \delta_{i}^{k,i}=(n+1)^{i}\frac{(k+i)!}{k!}
                          $, and, more generally,
			\item[(ii)] $|\delta_{j}^{k,i}| \leq C^{i}
                          \frac{(k+i)!}{k!}$, $j= -i, \dots, i$, where
                          $C$ is a suitable positive constant.
	\end{enumerate}
In the first sum in \eqref{Rec_Lag_2} we used the convention that $v_{\ell}(t)$ is
identically zero if $\ell$ is negative.
\end{lemma}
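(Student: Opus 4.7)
The plan is to work directly with the explicit representation
\[
v_k(t) = e^{-s/2}\, L_k^{(\alpha)}(s), \qquad s = \frac{t^{2n+2}}{n+1}, \qquad \alpha = -\frac{1}{2n+2},
\]
noting that $t^{2n+2}/(2n+2) = s/2$. The chain rule yields $t \frac{d}{dt} = 2(n+1)\, s \frac{d}{ds}$, whence
\[
t\, \frac{d}{dt} v_k = (n+1)\, e^{-s/2} \Bigl[ -s\, L_k^{(\alpha)}(s) + 2 s\, \frac{d L_k^{(\alpha)}}{ds}(s) \Bigr].
\]

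To obtain \eqref{Rec_Lag_1}, I would first apply \eqref{Lag_p_2} to rewrite $s\, (L_k^{(\alpha)})'$ as $k L_k^{(\alpha)} - (k+\alpha) L_{k-1}^{(\alpha)}$, and then invoke \eqref{Lag_p_1} to trade the remaining $s L_k^{(\alpha)}$ for a combination of the three consecutive Laguerre polynomials $L_{k-1}^{(\alpha)}, L_k^{(\alpha)}, L_{k+1}^{(\alpha)}$. After regrouping and multiplying by $e^{-s/2}$, the coefficients collapse exactly to $(n+1)(k+1)$, $-(n+1)(1+\alpha)$, and $-(n+1)(k+\alpha)$, which is \eqref{Rec_Lag_1}. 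Formula \eqref{Rec_Lag_0} is then the special case $k=0$ combined with the convention $v_{-1} \equiv 0$: the term $(k+\alpha) v_{k-1}$ does not vanish coefficient-wise at $k=0$, but it drops because the eigenfunction itself does.

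For \eqref{Rec_Lag_2} I would argue by induction on $i$, with base case $i=1$ given by \eqref{Rec_Lag_1}. Applying $t \partial_t$ to the inductive hypothesis $(t \partial_t)^i v_k = \sum_{j=-i}^{i} \delta_j^{k,i}\, v_{k+j}$ and using \eqref{Rec_Lag_1} on each $v_{k+j}$, the coefficient of $v_{k+j'}$ satisfies
\[
\delta_{j'}^{k,i+1} = (n+1) \Bigl[ (k+j')\, \delta_{j'-1}^{k,i} - (1+\alpha)\, \delta_{j'}^{k,i} - (k+j'+1+\alpha)\, \delta_{j'+1}^{k,i} \Bigr],
\]
with the convention $\delta_j^{k,i}=0$ for $|j|>i$ (which encodes $v_{-1}\equiv0$ at the left endpoint). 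The uppermost index obeys $\delta_{i+1}^{k,i+1} = (n+1)(k+i+1)\, \delta_i^{k,i}$, and a telescoping product yields the closed form $\delta_i^{k,i} = (n+1)^i (k+i)!/k!$ asserted in (i). For the bound (ii), assuming inductively $|\delta_j^{k,i}| \leq C^i (k+i)!/k!$ for every $j$, the three-term recursion gives
\[
|\delta_{j'}^{k,i+1}| \leq (n+1)\bigl[ 2(k+i+1) + 2 + |\alpha| \bigr] C^i \frac{(k+i)!}{k!} \leq C^{i+1} \frac{(k+i+1)!}{k!},
\]
provided $C$ is chosen sufficiently large in terms of $n$ only.

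The main obstacle is the algebraic manipulation in the first step: the $s$-dependence in $-s L_k^{(\alpha)} + 2 s (L_k^{(\alpha)})'$ must cancel exactly against the $s$-term contained in \eqref{Lag_p_1}, and one has to track the signs of $\alpha$ carefully to land on the coefficients stated in \eqref{Rec_Lag_1}. Once that identity is in hand, the inductive part is essentially bookkeeping, the only subtlety being to enforce the vanishing convention at $j=-i-1$ so that the shifted term $\delta_{-i-1}^{k,i}$ in the recursion is set to zero.
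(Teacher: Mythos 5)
Your proof is correct and follows essentially the same route as the paper: \eqref{Rec_Lag_1} and \eqref{Rec_Lag_0} are deduced from the two Laguerre identities \eqref{Lag_p_1}--\eqref{Lag_p_2} after the change of variable $s=t^{2n+2}/(n+1)$ (so that $t\partial_t=2(n+1)s\partial_s$), and \eqref{Rec_Lag_2} follows by induction via the three-term recursion for the $\delta^{k,i}_j$ obtained by applying \eqref{Rec_Lag_1} termwise. Your unified treatment with the convention $\delta^{k,i}_j=0$ for $|j|>i$ is a modest improvement over the paper's proof, which spells out the four boundary indices separately and in doing so picks up sign slips in the formulas for $\tilde\delta^{k,i}_{-i+1}$ and $\tilde\delta^{k,i}_{-i}$ (and note also that the $\alpha$ in the paper's \eqref{Lag_p_1} has the wrong sign relative to the standard Laguerre recurrence, so, as you say, the sign of $\alpha$ must be tracked carefully to arrive at the stated $-(1+\alpha)$ coefficient).
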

\begin{proof}
	Taking advantage of the relations (\ref{Lag_p_1}) and (\ref{Lag_p_2}) in (L-1), Proposition \ref{Pr_Lag_Pol}, it easy to obtain the recurrence relation
	(\ref{Rec_Lag_1}) and (\ref{Rec_Lag_0}).

        Setting $\delta^{k,i}_{j} = (n+1)^{i}
        \tilde{\delta}^{k,i}_{j}$ a direct computation allows us to
        obtain recursively the coefficients in the sum (\ref{Rec_Lag_2}):
	\begin{enumerate}
		\item[$\bullet$] $\tilde{\delta}_{i}^{k,i}= (k+i)\tilde{\delta}_{i-1}^{k,i-1}$;
		\item [$\bullet$]$\tilde{\delta}_{i-1}^{k,i}= (k+i-1)\tilde{\delta}_{i-2}^{k,i-1}- (1+\alpha) \tilde{\delta}_{i-1}^{k,i-1}$;
		\item [$\bullet$]$\tilde{\delta}_{j}^{k,i}= (k+j)\tilde{\delta}_{j-1}^{k,i-1}- (1+\alpha) \tilde{\delta}_{j}^{k,i-1}-(k+j+1+\alpha)\tilde{\delta}_{j+1}^{k,i-1}$, $j=-i+2, \dots, i-2$;
		\item [$\bullet$]$\tilde{\delta}_{-i+1}^{k,i}= (k+\alpha-i+2)\tilde{\delta}_{-i+2}^{k,i-1}- (1+\alpha) \tilde{\delta}_{-i+1}^{k,i-1}$;
		\item [$\bullet$]$\tilde{\delta}_{-i}^{k,i}= (k+\alpha-i+1)\tilde{\delta}_{-i+1}^{k,i-1}$;
	\end{enumerate}
	where $\tilde{\delta}^{k,i-1}_{\ell}$, $ \ell \in \lbrace i-1,
        -i+1 \rbrace$, are the coefficients of
        $(n+1)^{-i+1}\left(t\frac{d}{dt}\right)^{i-1} v_{k}(t)$.
   	From the above relations and arguing by induction we obtain $ \tilde{\delta}_{i}^{k,i}=\frac{(k+i)!}{k!} $, $|\tilde{\delta}_{i-1}^{k,i}|  \leq   i (1+\alpha) \frac{(k+i-1)!}{k!}$, $\tilde{\delta}_{-i}^{k,i}= \prod_{\ell = 0}^{i-1}(k+\alpha - \ell)$ and
   $|\tilde{\delta}_{j}^{k,i}| \leq C^{i} \frac{(k+i)!}{k!}$, $j= -i+1, \dots, i-2$, where $C$ is a suitable positive constant.
\end{proof}	
\begin{remark}
	From the above Lemma and taking advantage of relation
        (\ref{Orth-v_k}), Remark \ref{Part_Orth}, we can describe the
	action of the operators  $\mathscr{P}_{i}(t\partial_{t})$,
        (\ref{OP-Pi}), on the even-parity eigenfunctions. We have
	\begin{enumerate}
		\item for $p\leq i$
		\begin{align}\label{P_i_C1}
		\mathscr{P}_{i}(t\partial_{t})v_{p}(t)=
                  \sum_{\nu=0}^{i+p} \left(\sum_{j=|\nu-p|}^{i}
                  \text{\textcursive{ p}}_{i,j}\delta_{\nu - p}^{p,j} \right)v_{\nu}(t);
		\end{align}
		\item for $p\geq i$
		\begin{align}\label{P_i_C2}
		\mathscr{P}_{i}(t\partial_{t})v_{p}(t)=
                  \sum_{\nu=p-i}^{p+i}
                  \left(\sum_{j=|\nu-p|}^{i}\!\!\!\text{\textcursive{p}}_{i,j}\delta_{\nu
                  - p}^{p,j} \right)v_{\nu}(t). 
		\end{align}
	\end{enumerate} 
\end{remark}
\vspace{1em}
Next we show that the  eigenfunctions $v_{k}(t)$ belong to the  \textit{Gel'fand-Shilov space} $S^{(2n+1)/(2n+2)}_{1/(2n+2)}(\mathbb{R})$;
the same approach can be used in the case of the odd-parity
eigenfunctions $w_{k}(t)$.
\begin{definition}
	Let $\alpha$ and $\beta$ be real positive numbers. By $S^{\alpha}_{\beta}\left(\mathbb{R}\right)$ we denote the set of infinitely differentiable functions $f(t)$,
	in $\mathbb{R}$, satisfying the inequality
	\begin{align}\label{GS-sp}
	|t^{k}f^{(q)}(t)| \leq C A^{k}B^{q} k^{\alpha k} q^{\beta q}
	\end{align} 
	where the positive constants $C$, $A$ and $B$ depend only on $f(t)$.
\end{definition}
Fore more details on the subject we refer to \cite{Gelfand_Shilov}.

In order to prove the following proposition we follow the ideas of Gundersen, \cite{Gundersen_78}, and of Titchmarsh, \cite{Titich_P1}.
\begin{proposition}\label{L-inf-v_k}
	There exists a positive constant $ C_{0} $, such that the
        following estimates hold
	\begin{align}\label{Est_Eig}
	\|v_{k}(t)\|_{\infty} 
	%\doteq \sup_{t} |v_{k}(t)| 
          \leq   %\frac{2}{3}
        C_{0}  E_{k}^{\frac{3}{2}+\frac{1}{4n+4}}
	\quad \text{ and } \quad
	\|w_{k}(t)\|_{\infty} 
	%\doteq \sup_{t} |w_{k}(t)| 
          \leq  % \frac{2}{3}
         C_{0} E_{k}^{\frac{3}{2}+\frac{1}{4n+4}}.
	\end{align}
\end{proposition}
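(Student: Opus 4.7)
The plan is to combine an energy identity coming directly from the eigenvalue equation \eqref{Eig_P} with a one-dimensional Sobolev embedding and a weighted interpolation estimate, yielding a polynomial bound in $E_{k}$ that will be dominated by the one in the statement.

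First I would exploit the equation itself. Multiplying $-v_{k}'' + t^{2(2n+1)}v_{k} = E_{k}\, t^{2n} v_{k}$ by $v_{k}$, integrating over $\mathbb{R}$ and using the normalization $\|t^{n}v_{k}\|_{0}=1$ fixed in Remark \ref{Part_Orth}, I get
$$
\|v_{k}'\|_{0}^{2} + \|t^{2n+1} v_{k}\|_{0}^{2} = E_{k}\,\|t^{n}v_{k}\|_{0}^{2} = E_{k},
$$
so in particular $\|v_{k}'\|_{0} \leq E_{k}^{1/2}$. No boundary terms appear because $v_{k}$ decays super-exponentially at infinity, in view of \eqref{Ev_Eigf}.

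Next I would control the unweighted $L^{2}$ norm $A := \|v_{k}\|_{0}$. Splitting $\mathbb{R}$ at a radius $R>0$ to be chosen, one has
$$
\int_{|t|>R} v_{k}^{2}\,dt \leq R^{-2n}\|t^{n} v_{k}\|_{0}^{2} = R^{-2n},
$$
while on the inner region I use $\int_{|t|\leq R} v_{k}^{2}\,dt \leq 2R\,\|v_{k}\|_{\infty}^{2}$ together with the sharp one-dimensional embedding $\|v_{k}\|_{\infty}^{2}\leq 2\|v_{k}\|_{0}\|v_{k}'\|_{0}$. Setting $B:=\|v_{k}'\|_{0}$, this produces a quadratic inequality $A^{2}\leq c_{1} R A B + c_{2} R^{-2n}$; optimizing in $R$ gives the weighted interpolation bound
$$
\|v_{k}\|_{0} \leq C\,\|v_{k}'\|_{0}^{n/(n+1)}\,\|t^{n}v_{k}\|_{0}^{1/(n+1)} \leq C\, E_{k}^{\frac{n}{2(n+1)}}.
$$
Plugging this back into the sharp Sobolev embedding yields
$$
\|v_{k}\|_{\infty} \leq \sqrt{2}\,\|v_{k}\|_{0}^{1/2}\|v_{k}'\|_{0}^{1/2} \leq C\, E_{k}^{\frac{2n+1}{4n+4}},
$$
which is $a$ $fortiori$ bounded by $C_{0}\, E_{k}^{3/2 + 1/(4n+4)}$. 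The same argument applies verbatim to $w_{k}$, which satisfies the same equation with eigenvalue $\tilde E_{k}$ and is normalized by \eqref{Orth-w_k}.

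The only delicate point is the weighted interpolation: the two pieces of the splitting are mutually entangled through $\|v_{k}\|_{\infty}$, so one has to set up a single quadratic inequality in $A$ before optimizing in $R$, rather than plugging estimates into one another and reasoning circularly. Apart from this bookkeeping, the proof is elementary and uses only the normalization and the eigenvalue equation.
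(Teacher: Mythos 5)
Your argument is correct, and it differs substantially from the one the paper uses. The paper follows Gundersen and Titchmarsh: it introduces $Q_{k}(t)=t^{2n}(t^{2n+2}-E_{k})$ and the turning point $T_{k}=E_{k}^{1/(2n+2)}$, observes that all zeros of $v_{k}$ lie in $(-T_{k},T_{k})$, and then bounds $|v_{k}|$ at an interior critical point $t_{2}$ by writing $v_{k}(t_{2})=\int_{t_{1}}^{t_{2}}\int_{s}^{t_{2}}t^{2n}(E_{k}-t^{2n+2})v_{k}(t)\,dt\,ds$ and applying Cauchy--Schwarz together with the normalization $\|t^{n}v_{k}\|_{0}=1$; this is a purely pointwise ODE argument tied to the oscillation/decay structure of the solution. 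Your route is instead an energy-method one: the $L^{2}$ pairing of the equation with $v_{k}$ gives $\|v_{k}'\|_{0}^{2}+\|t^{2n+1}v_{k}\|_{0}^{2}=E_{k}$ (the boundary terms vanish by the fast decay in \eqref{Ev_Eigf}), then the dyadic split of $\|v_{k}\|_{0}^{2}$ combined with $\|v_{k}\|_{\infty}^{2}\le 2\|v_{k}\|_{0}\|v_{k}'\|_{0}$ produces the scale-invariant Gagliardo--Nirenberg-type bound $\|v_{k}\|_{0}\lesssim\|v_{k}'\|_{0}^{n/(n+1)}\|t^{n}v_{k}\|_{0}^{1/(n+1)}$, and the Sobolev embedding closes the loop. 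Two remarks. First, your warning about setting up the quadratic inequality $A^{2}\le 4RAB+R^{-2n}$ in the single unknown $A=\|v_{k}\|_{0}$ before optimizing in $R$ is exactly the right point to flag. Second, your final exponent $\frac{2n+1}{4n+4}$ is in fact \emph{sharper} than the paper's $\frac{3}{2}+\frac{1}{4n+4}=\frac{6n+7}{4n+4}$, so the claimed estimate follows a fortiori; for the purposes of the paper (which only needs $\|v_{k}\|_{\infty}\le C_{v}(k+1)^{M}$ for some fixed $M$, cf.\ Corollary \ref{k-dep}) either exponent is adequate, but the energy-method bound is both simpler to justify rigorously and quantitatively stronger.
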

\begin{proof}
We set $Q_{k}(t) \doteq t^{2n}\left( t^{2n+ 2} - E_{k}\right)$. Even
though $ Q_{k} $ is not a polynomial of the form considered in
\cite{Gundersen_78}, \cite{Titich_P1}, we observe that it differs from
a polynomial of that form by the factor $ t^{2n} $. Hence if we work
on the complement of a small interval centered at the origin, we may
argue along the same lines of \cite{Gundersen_78}, \cite{Titich_P1},
Sections 5.4 and 8.4.1.

In what follows we work for $ t > 0 $. A symmetric argument holds for
$ t < 0 $.

Set $T_{k} \doteq E_{k}^{1/(2n+2)}$, so that $Q_{k}(T_{k})=0$ and $
Q_{k}(t) \neq 0 $ if $ t > T_{k} $.

Since $v_{k}(t)$ and $v''_{k}(t)$ have the same sign for $ t > T_{k}$,
all the zeros of $v_{k}(t)$ are in the interval $(0, T_{k})$.

Let $t_{1}$ be a zero of $v_{k}(t)$, so that $v''_{k}(t_{1}) = 0$, and
let $t_{2} > t_{1} $ be the next critical point of $v_{k}(t)$. 
To be definite we assume that $t_{2}$ is a maximum, the case of a
minimum being treated in the same way.

Let $s$ be a point in the interval $[t_{1}, t_{2}]$. Since $v''_{k}(t)
= -Q_{k}(t)v_{k}(t)$ we have 
	\begin{align*}
	\left| v_{k}(t_{2})\right| 
	&=\left|  \int_{t_{1}}^{t_{2}}v'_{k}(s)\,ds \right| = \left|\int_{t_{1}}^{t_{2}} \int_{s}^{t_{2}} t^{2n}\left(E_{k}-t^{2n+2}\right)v_{k}(t)\, dt\,ds\right|\\
	&\leq E_{k}\left| \int_{t_{1}}^{t_{2}}\int_{s}^{t_{2}} t^{2n}v_{k}(t)\, dt\,ds \right| 
	\leq  E_{k} |T_{k}|^{n}\int_{t_{1}}^{t_{2}}
   \left(t_{2}-s\right)^{1/2}\|t^{n}v_{k}(t)\|_{L^{2}(\mathbb{R})} \,
   ds \\
	&\leq \frac{2}{3} E_{k}^{1+\frac{n}{2n+2}} \left(t_{2}-t_{1}\right)^{3/2}
	\leq  \frac{2}{3} E_{k}^{1+\frac{n}{2n+2}} T_{k}^{3/2}.
	\end{align*}
Same argument if $ t_{2} $ is a minimum. As a consequence
the first of the estimates in \eqref{Est_Eig} follows from the
        fact that both $ v_{k} $ is rapidly decreasing
        at infinity.
        
	The same argument gives the second estimate in (\ref{Est_Eig}).
\end{proof}
Proposition \ref{L-inf-v_k} allows us to estimate how the $ L^{\infty}
$ norm of $ v_{k} $ depends on $ k $.
\begin{corollary}
\label{k-dep}
There is a positive constant, $ C_{v} $, such that
\begin{equation}
\label{k-dep-1}
\| v_{k} \|_{\infty} \leq C_{v} (k+1)^{\frac{3}{2} + \frac{1}{4n+4}} .
\end{equation}
\end{corollary}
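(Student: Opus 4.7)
The plan is essentially to read off the corollary from Proposition \ref{L-inf-v_k} by unwinding the explicit formula for the eigenvalue. Recall that by \eqref{Ev_Eigf} and the lines immediately following it, the eigenvalue associated to $v_k$ is
\begin{equation*}
E_k = 4(n+1)k + 2n + 1,
\end{equation*}
which, since $n$ is a fixed positive integer, satisfies the two-sided estimate
\begin{equation*}
(2n+1)(k+1) \;\leq\; E_k \;\leq\; (4n+3)(k+1), \qquad k \in \mathbb{Z}_+ .
\end{equation*}
In particular there is a constant $\kappa_n > 0$ depending only on $n$ such that $E_k \leq \kappa_n (k+1)$ for every $k \geq 0$.

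Plugging this into the first estimate of \eqref{Est_Eig} in Proposition \ref{L-inf-v_k}, we obtain
\begin{equation*}
\|v_k\|_\infty \;\leq\; C_0\, E_k^{\frac{3}{2}+\frac{1}{4n+4}} \;\leq\; C_0\, \kappa_n^{\frac{3}{2}+\frac{1}{4n+4}} (k+1)^{\frac{3}{2}+\frac{1}{4n+4}} ,
\end{equation*}
so setting
\begin{equation*}
C_v \;=\; C_0\, \kappa_n^{\frac{3}{2}+\frac{1}{4n+4}}
\end{equation*}
gives the desired bound \eqref{k-dep-1}. Since $n$ is fixed throughout the paper, $C_v$ is a genuine constant.

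There is no real obstacle here: the content of the corollary is just the translation of the eigenvalue bound in Proposition \ref{L-inf-v_k} from the spectral variable $E_k$ to the index $k$, which is a one-line calculation once the linearity of $E_k$ in $k$ is recorded. The same argument, with $\tilde E_k = 4(n+1)k + 2n+3$ in place of $E_k$, would yield an analogous bound for $\|w_k\|_\infty$ if it were needed.
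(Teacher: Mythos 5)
Your proof is correct and is exactly what the paper intends: since $E_k = 4(n+1)k + 2n+1$ is linear in $k$, the bound in Proposition \ref{L-inf-v_k} translates immediately into a bound in $(k+1)$. One small arithmetic slip: the two-sided estimate $E_k \leq (4n+3)(k+1)$ fails once $k > 2n+2$ (the correct constant is $4n+4$, since $E_k = (4n+4)k + 2n+1 \leq (4n+4)(k+1)$), but this is harmless because the argument only needs some $\kappa_n$ with $E_k \leq \kappa_n(k+1)$, and your subsequent lines use exactly that.
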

\begin{proposition}
	There exists a positive constant $ C_{1} $, such that the
        following estimates hold
	\begin{align}\label{Est_Der_Eig}
	\|v'_{k}(t)\|_{\infty} 
	%\doteq \sup_{t} |v'_{k}(t)| 
          \leq   %\frac{2}{3}
        C_{1} E_{k}^{\frac{7}{2}-\frac{1}{4n+4}}
	\quad \text{ and } \quad
	\|w'_{k}(t)\|_{\infty} 
	%\doteq \sup_{t} |w'_{k}(t)| 
          \leq   %\frac{2}{3}
        C_{1} E_{k}^{\frac{7}{2}-\frac{1}{4n+4}}.
	\end{align}
\end{proposition}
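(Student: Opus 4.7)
The plan is to mirror the proof of Proposition \ref{L-inf-v_k}, exploiting the differential equation $v_k'' = -Q_k v_k$ with $Q_k(t) = t^{2n}(E_k - t^{2n+2})$. By symmetry $|v'_k|$ is even, so I work on $t \geq 0$, where $Q_k \geq 0$ on $[0, T_k]$ and $Q_k < 0$ on $(T_k, +\infty)$, with $T_k = E_k^{1/(2n+2)}$.

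First I would localize the global maximum of $|v'_k|$ inside $[0, T_k]$. On $(T_k, +\infty)$ the equation gives $v_k v_k'' = -Q_k v_k^2 > 0$, whence $(v_k^2)'' > 0$; thus $v_k^2$ is convex on $(T_k, +\infty)$, and being integrable it decreases strictly monotonically there. This forces $v_k v'_k < 0$ on $(T_k, +\infty)$, and therefore
\begin{align*}
\bigl((v'_k)^2\bigr)' = 2 v'_k v_k'' = -2 Q_k\, v_k v'_k < 0.
\end{align*}
Hence $|v'_k|$ is strictly decreasing on $[T_k, +\infty)$, and the maximum of $|v'_k|$ on $[0, +\infty)$ is attained at some $t^* \in [0, T_k]$ (either a zero of $v_k$, which by the equation is a critical point of $v'_k$, or the turning point $T_k$ itself).

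Next, using $v'_k(0) = 0$ (by evenness), I would integrate the equation from $0$ to $t^*$ and estimate
\begin{align*}
|v'_k(t^*)| = \Bigl|\int_0^{t^*} Q_k(s) v_k(s)\, ds\Bigr| \leq \|v_k\|_\infty \int_0^{T_k} |Q_k(s)|\, ds.
\end{align*}
A direct computation gives $\int_0^{T_k} |Q_k(s)|\, ds = \frac{2(n+1)}{(2n+1)(4n+3)}\, E_k\, T_k^{2n+1}$, which is of order $E_k^{(4n+3)/(2n+2)}$. Combined with $\|v_k\|_\infty \leq C_0 E_k^{3/2 + 1/(4n+4)}$ from Proposition \ref{L-inf-v_k}, this yields
\begin{align*}
|v'_k(t^*)| \leq C\, E_k^{\frac{3}{2} + \frac{1}{4n+4} + \frac{4n+3}{2n+2}} = C\, E_k^{\frac{7}{2} - \frac{1}{4n+4}},
\end{align*}
which is exactly the claimed bound.

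For the odd eigenfunction $w_k$ the same strategy applies; since $w_k(0) = 0$ implies $w_k''(0) = 0$, the point $t = 0$ is itself a critical point of $w'_k$, and integrating from $0$ produces an additional boundary term $w'_k(0)$, which a direct evaluation at the Laguerre polynomial shows to be $O(k^{1/(2n+2)})$, harmlessly absorbed into the main term. The step I expect to be most delicate is the monotonicity argument in the classically forbidden region $t > T_k$: one must carefully track the signs of $Q_k$, $v_k$, and $v'_k$ to exclude a maximum of $|v'_k|$ there. Everything else reduces to routine bookkeeping of exponents.
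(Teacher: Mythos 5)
Your argument is correct and in substance the same as the paper's: both bound $|v'_k|$ by integrating $v''_k=-Q_k v_k$ from a zero of $v'_k$ (you take $t=0$ by parity, the paper the critical point of $v_k$ closest to $t_0$), estimate $|v''_k|\leq E_k T_k^{2n}\|v_k\|_\infty$ on $[0,T_k]$, and combine with Proposition \ref{L-inf-v_k}. The monotonicity of $|v'_k|$ on $(T_k,\infty)$ that you verify via $(v_k^2)''>0$ and $\bigl((v'_k)^2\bigr)'<0$ is left implicit in the paper (which simply invokes the decay of $v_k,v'_k$ at infinity), and the boundary term $w'_k(0)$ arises only because you integrate from $0$ rather than from the critical point of $w_k$ nearest $t_0$ as the paper does; both differences are minor, and your bookkeeping correctly reproduces the exponent $\tfrac{7}{2}-\tfrac{1}{4n+4}$.
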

\begin{proof}
	Let $t_{0}$ be such that $|t_{0}| < T_{k}$ and let $t_{1}$
        denote the closest critical point for $v_{k}(t)$. Then
	\begin{align*}
	|v'_{k}(t)|& \leq \left| \int_{t_{1}}^{t_{0}} \!\!\! v''_{k}(s) \, ds \right| =  \left| \int_{t_{1}}^{t_{0}}\!\!\! s^{2n}\left(s^{2n+2}-E_{k}\right) v_{k}(s) \, ds \right| \\
	&\leq E_{k} T_{k}^{2n} \|v_{k}\|_{\infty} |t_{0}-t_{1}|
	\leq E_{k} T_{k}^{2n+1} \|v_{k}\|_{\infty}. 
	\end{align*}
	On the other hand, since $ |v_{k}(t)| \rightarrow 0$  and
        $|v'_{k} (t)| \rightarrow 0$ for $t \rightarrow \pm \infty$ we
        obtain the first of the estimates in \eqref{Est_Der_Eig}.
        
	Same argument for the second estimate in (\ref{Est_Der_Eig}). 	
\end{proof}
\begin{proposition}\label{Exp-Est_Eig}
	The following estimate holds
	\begin{align}
          \label{vk-est-real}
	|v_{k}(t)| \leq C_{0}^{k+1}  e^{-Bt^{2n+2}},
	\end{align}
        where
        $$
        B=  \frac{1-\delta^{2n+1}}{2n+2}  
        $$
        where $\delta \leq 1/2$ and $ C_{0}$ is a suitable constant
        independent of $ k $.
\end{proposition}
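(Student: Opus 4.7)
The strategy is to use the explicit form \eqref{Ev_Eigf}, namely $v_{k}(t) = e^{-t^{2n+2}/(2n+2)} L_{k}^{(\alpha)}\!\bigl(t^{2n+2}/(n+1)\bigr)$ with $\alpha = -1/(2n+2) \in (-1, 0)$, and to reduce the claim to a pointwise bound on the Laguerre polynomial. Setting $s = t^{2n+2}/(n+1)$ and $\eta = \delta^{2n+1}/2$, a short algebraic manipulation gives $-t^{2n+2}/(2n+2) + \eta s = -(1-\delta^{2n+1})s/2 = -B t^{2n+2}$, so the claim reduces to showing
\[
|L_{k}^{(\alpha)}(s)| \leq C_{0}^{k+1}\, e^{\eta s}, \qquad s \geq 0.
\]
Since $\delta \leq 1/2$ forces $\eta \leq 1/2^{2n+2}$, I need such a bound with an \emph{arbitrarily small} prescribed exponential rate $\eta$, allowing the constant $C_{0}$ to depend on $\eta$ (hence on $\delta$ and $n$) but not on $k$.

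The natural tool is the generating function identity
\[
\sum_{k \geq 0} L_{k}^{(\alpha)}(s)\, z^{k} = \frac{e^{-sz/(1-z)}}{(1-z)^{\alpha+1}}, \qquad |z| < 1,
\]
together with Cauchy's integral formula on the circle $|z| = r$, $0 < r < 1$:
\[
L_{k}^{(\alpha)}(s) = \frac{1}{2\pi i} \oint_{|z| = r}
\frac{e^{-sz/(1-z)}}{(1-z)^{\alpha+1}\, z^{k+1}}\, dz.
\]
A routine computation with $z = r e^{i\theta}$ shows that $\Re(-z/(1-z)) = (r^{2} - r\cos\theta)/|1-z|^{2}$ attains its maximum $r/(1+r)$ at $\theta = \pi$; since $\alpha + 1 = (2n+1)/(2n+2) > 0$, the factor $|1-z|^{-\alpha-1}$ on the contour is at most $(1-r)^{-\alpha-1}$. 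The $ML$-estimate then yields
\[
|L_{k}^{(\alpha)}(s)| \leq \frac{e^{sr/(1+r)}}{r^{k}(1-r)^{\alpha+1}}.
\]

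The final step is to choose $r = \eta/(1-\eta)$, which makes $r/(1+r) = \eta$. The hypothesis $\delta \leq 1/2$ enters precisely here: it guarantees $\eta < 1/2$, so $r < 1$ (the contour stays inside the disk of convergence of the generating function) and $1 - 2\eta > 0$ (so the constant $(1-r)^{-\alpha-1} = ((1-\eta)/(1-2\eta))^{\alpha+1}$ is finite). Estimating $r^{-k} = ((1-\eta)/\eta)^{k} \leq (2/\delta^{2n+1})^{k}$ and absorbing both factors into a common $C_{0} = C_{0}(n,\delta)$ gives the desired bound on $L_{k}^{(\alpha)}(s)$, and hence on $v_{k}(t)$ after substitution. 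The main obstacle, such as it is, is the tradeoff built into the optimization: to push $B$ close to the natural decay rate $1/(2n+2)$ one must take $\eta$ small, which forces $r$ small and $C_{0}$ large, but harmlessly, since $r$ and $C_{0}$ depend only on $\delta$ and $n$, not on $k$.
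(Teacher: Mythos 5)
Your argument is correct and proceeds by a genuinely different route from the paper's. The paper works directly from the ODE $v_{k}''(t) = Q_{k}(t)\, v_{k}(t)$, with $Q_{k}(t) = t^{2n}(t^{2n+2}-E_{k})$: in the classically forbidden region $t>T_{k}=E_{k}^{1/(2n+2)}$ one has $Q_{k}>0$, an integration-by-parts / phase-plane argument (following Gundersen and Titchmarsh) yields the Riccati-type inequality $\bigl(-v_{k}'/v_{k}\bigr)^{2}\geq Q_{k}$, and integrating the logarithmic derivative from $Z_{0}=\delta^{-2}E_{k}^{1/(2n+2)}$ — after replacing $Q_{k}^{1/2}$ by the lower bound $s^{2n+1}-E_{k}^{1/2}s^{n}$, valid for $s\geq T_{k}$ — produces the Gaussian-type decay with a constant absorbing the $E_{k}$-dependence. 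You instead exploit the exactly solvable structure: you peel off the explicit exponential factor in $v_{k}=e^{-t^{2n+2}/(2n+2)}L_{k}^{(\alpha)}\bigl(t^{2n+2}/(n+1)\bigr)$, reduce the claim to the sub-exponential growth bound $|L_{k}^{(\alpha)}(s)|\leq C_{0}^{k+1}e^{\eta s}$, and obtain the latter from the Laguerre generating function via Cauchy's formula on a circle of radius $r=\eta/(1-\eta)$, noting that $\Re\bigl(-z/(1-z)\bigr)$ peaks at $r/(1+r)=\eta$ and that $\alpha+1=(2n+1)/(2n+2)>0$ keeps $(1-z)^{-\alpha-1}$ under control. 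Both are rigorous; the paper's approach is more robust (it does not need the Laguerre representation and transfers verbatim to the odd eigenfunctions $w_{k}$ and to more general anharmonic potentials), whereas yours is shorter and more elementary once the closed form is in hand. Incidentally, the exponent $B$ recovered in the paper's proof is $\frac{1-2\delta^{2(n+1)}}{2n+2}$ rather than the $\frac{1-\delta^{2n+1}}{2n+2}$ stated in the proposition — apparently a typo in the paper — and your calculation $\eta=\delta^{2n+1}/2$ reproduces the stated form exactly; in any case $\delta\leq 1/2$ keeps both expressions positive and the precise value of $B$ is immaterial downstream.
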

\begin{proof}
	Let $t_{0}$ be such that $Q_{k}(t_{0})$ and $v_{k}(t_{0})$ are both positive.
	(In the odd-parity case, $w_{k}(t)$, we have to consider $|w_{k}(t_{0})|$ since for $t_{0}< -T_{k} $, $Q_{k}(t_{0})> 0$ and $w_{k}(t_{0}) <0$.)

        We assume that $t_{0}> T_{k}$. We have that for $t> t_{0}$, $Q_{k}(t)> 0$, $v_{k}(t) > 0$ and $v'_{k}(t) < 0$, $v_{k}(t) \rightarrow 0 $ as $t \rightarrow + \infty$ 
	(in the odd parity case $w_{k}(t) < 0$ and $w'_{k}(t) > 0$ as $t \rightarrow - \infty$.)
	We have 
	\begin{align*}
	- v'_{k}(t) v''_{k}(t) = Q_{k}(t) v_{k}(t) (-v'_{k}(t)) \geq Q_{k}(t_{0}) v_{k}(t) (-v'_{k}(t)).  
	\end{align*}  
	%
	%We perform the integral of both side in the interval  $(t_{0}, t_{1})$.
	We have
	\begin{align*}
	\int_{t_{0}}^{t_{1}} v'_{k}(t) v''_{k}(t) dt
	=\left. \frac{\left(v'_{k}(t)\right)^{2}}{2}\right|_{t_{0}}^{t_{1}}
	\quad \text{ and } \quad
	\int_{t_{0}}^{t_{1}} v_{k}(t) v'_{k}(t) dt
	=\left. \frac{\left(v_{k}(t)\right)^{2}}{2}\right|_{t_{0}}^{t_{1}}.
	\end{align*} 
	We obtain 
	\begin{align*}
	-\frac{\left(v'_{k}(t_{1})\right)^{2}}{2} + \frac{\left(v'_{k}(t_{0})\right)^{2}}{2}
	\geq Q_{k}(t_{0}) \left[-\frac{\left(v_{k}(t_{1})\right)^{2}}{2}  + \frac{\left(v_{k}(t_{0})\right)^{2}}{2}\right].
	\end{align*}
	Taking $t_{1} \rightarrow + \infty $, since both
        $v_{k}(t_{1}) \rightarrow 0$ and  $v'_{k}(t_{1}) \rightarrow 0$, we have
	\begin{align*}
	\left(v'_{k}(t_{0})\right)^{2} \geq Q_{k}(t_{0}) \left(v_{k}(t_{0})\right)^{2}
	\text{ or equivalently } \left(- \frac{v'_{k}(t_{0})}{v_{k}(t_{0})}\right)^{2} \geq Q_{k}(t_{0}).
	\end{align*}
	Without loss of generality we replace $t_{0}$ by $t$, $t >  T_{k}$. Integrating  both side on the interval $(t_{0}, t)$ we have
	\begin{align*}
	- \int_{t_{0}}^{t}  \frac{v'_{k}(s)}{v_{k}(s)} \, ds = \log v_{k}(t_{0}) - \log v_{k}(t) > \int_{t_{0}}^{t} \left(Q_{k}(s)\right)^{1/2}\, ds.
	\end{align*}  
	We obtain
	\begin{align*}
	v_{k}(t) \leq v_{k}(t_{0}) e^{- \int_{t_{0}}^{t} \left(Q_{k}(s)\right)^{1/2}\, ds}.
	\end{align*}
	In the region $s \geq E_{k}^{1/(2n+2)}=T_{k}$ we have $\left( s^{2n+1} -E_{k}^{1/2} s^{n}\right)^{2} \leq s^{4n+2} - E_{k}s^{2n}$.
	We set $Z_{0} = \delta^{-2} E_{k}^{1/(2n+2)} (\geq T_{k})$, $\delta \leq 1/2$. Taking $t_{0} = Z_{0}$ in the above formula we have
	\begin{align*}
	v_{k}(t) &\leq v_{k}(Z_{0}) e^{- \int_{Z_{0}}^{t} \left(Q_{k}(s)\right)^{1/2}\, ds}
	\leq v_{k}(Z_{0}) e^{- \int_{Z_{0}}^{t}  (s^{2n+1}
                   -E_{k}^{1/2} s^{n}) \, ds}\\
	&\leq v_{k}(Z_{0}) e^{\frac{Z_{0}^{2n+2}}{2n+2} - E_{k}^{1/2} \frac{Z_{0}^{n+1}}{n+1}} 
	e^{-\frac{t^{2n+2}}{2n+2} + E_{k}^{1/2} \frac{t^{n+1}}{n+1}} 
	= C(Z_{0}) e^{-\frac{t^{2n+2}}{2n+2} + E_{k}^{1/2} \frac{t^{n+1}}{n+1}} .
	\end{align*}
	Let $B = \frac{1- 2 \delta^{2(n+1)}}{2n+2}$, we have
	\begin{align*}
	v_{k}(t)  \leq C(Z_{0}) e^{- Bt^{2n+2}} , \quad \text{ for } t
          \geq Z_{0}.
	\end{align*}
	We remark that
	\begin{align*}
	C(Z_{0}) =  v_{k}(Z_{0}) e^{\frac{\delta^{-4(n+1)}}{2n+2} E_{k} - E_{k} \frac{\delta^{-2(n+1)}}{n+1}}
	\leq  \|v_{k}\|_{\infty} e^{\delta^{-4(n+1)} \frac{E_{k}}{2n+2}}.
	\end{align*}
	On the other side if $ t \in [0, Z_{0}]$ we have
	\begin{align*}
	v_{k}(t) \leq \|v_{k}\|_{\infty} e^{B\delta^{-4(n+1)} E_{k}}.
	\end{align*}
	Hence we have obtained the estimate
$$ 
|v_{k}(t) | \leq C_{0}^{k+1} e^{- B t^{2(n+1)}}. 
$$
where $ C_{0} $, $ B $ are suitable positive constants independent of
$ k $. 
\end{proof}
\begin{proposition}
  \label{est-v'}
	The following estimate holds
	\begin{align}
	|v'_{k}(t)| \leq C_{1} C_{0}^{k} e^{-B't^{2n+2}},
	\end{align}
	where $0 < B' < B$, $ B $, $ C_{0} $ are given in Proposition
        \ref{Exp-Est_Eig} and $C_{1} $ suitable independent of $ k $
        and such that $ C_{1} = \mathscr{O}((B-B')^{-2}) $. 
\end{proposition}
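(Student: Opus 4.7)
The strategy is to integrate the ODE $v_k''(s) = Q_k(s) v_k(s)$, with $Q_k(s) = s^{2n}(s^{2n+2} - E_k)$, once from $+\infty$, and then insert the pointwise bound of Proposition \ref{Exp-Est_Eig}. By the parity of $v_k$ in $t$, it suffices to treat $t \geq 0$. Since $v_k$ decays superexponentially and $Q_k$ grows only polynomially, $v_k''$ is integrable near $+\infty$, so $\lim_{s \to +\infty} v_k'(s)$ exists and must vanish (otherwise $v_k$ would be asymptotically linear, contradicting its decay). Hence
$$
v_k'(t) = -\int_t^{+\infty} Q_k(s) v_k(s) \, ds.
$$

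Next I insert $|v_k(s)| \leq C_0^{k+1} e^{-B s^{2n+2}}$ and exploit, for $s \geq t \geq 0$, the elementary splitting
$$
e^{-B s^{2n+2}} \leq e^{-B' t^{2n+2}} \, e^{-(B-B') s^{2n+2}},
$$
which is equivalent to the trivial $B' s^{2n+2} \geq B' t^{2n+2}$. Pulling the $t$-factor outside yields
$$
|v_k'(t)| \leq C_0^{k+1} e^{-B' t^{2n+2}} \int_0^{+\infty} \bigl(s^{2(2n+1)} + E_k s^{2n}\bigr) e^{-(B-B') s^{2n+2}} \, ds.
$$

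The bracket reduces, via the substitution $u = (B-B') s^{2n+2}$, to a combination of Gamma integrals, producing constants (depending only on $n$) times $(B-B')^{-(4n+3)/(2n+2)}$ from the first summand and $E_k (B-B')^{-(2n+1)/(2n+2)}$ from the second. Both exponents are strictly less than $2$, so the bracket is $O\bigl((B-B')^{-2}\bigr)$ as $B-B' \to 0^+$. Finally, since $E_k = 4k(n+1) + 2n + 1 = O(k)$, the extraneous $k$-factor combines with $C_0^{k+1}$ and can be absorbed by enlarging the base: any polynomial in $k$ times $C_0^k$ is bounded by $\tilde C_0^k$ for $\tilde C_0 > C_0$, and the residual finite values at small $k$ are swept into $C_1$. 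This yields the claimed bound $|v_k'(t)| \leq C_1 C_0^k e^{-B' t^{2n+2}}$ with $C_1 = \mathscr{O}\bigl((B-B')^{-2}\bigr)$.

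The only genuinely delicate point is the justification of the integration from $+\infty$, i.e.\ the vanishing of $v_k'$ there; everything else is a computation with Gamma integrals and the elementary exponential splitting. Accordingly, I do not expect any serious obstacle beyond the careful bookkeeping of the $k$-dependence needed to pass from $E_k\,C_0^{k+1}$ to $C_0^k$ with a $k$-independent prefactor.
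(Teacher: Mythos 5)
Your route is genuinely different from the paper's. Both start from the ODE $v_k'' = Q_k v_k$, but where you integrate once from $+\infty$ and then estimate the resulting integral, the paper instead applies the Mean Value Theorem twice — once to $v_k$ and once to $v_k'$ — on intervals of unit length lying in the region $t > T_k = E_k^{1/(2n+2)}$, to reduce $|v_k'(t)|$ directly to $|v_k(t)|$ times a polynomial in $t$. Your justification that $v_k' \to 0$ at infinity is correct (integrability of $v_k''$ plus the decay of $v_k$), and the Gamma-integral computation and the resulting $(B-B')$-powers are right. As a method your approach is cleaner and at least as rigorous.

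There is, however, one point where your bookkeeping falls short of the exact claim. Your second Gamma integral carries the coefficient $E_k$, and since $E_k = O(k)$, you end up with a bound of the shape $E_k\,C_0^{k+1}\,O\!\left((B-B')^{-2}\right)e^{-B't^{2n+2}}$. Absorbing the factor $E_k$ by replacing $C_0$ with a larger $\tilde C_0$ — which is what you propose — proves a true statement, but not the one claimed: the proposition explicitly asserts the bound with \emph{the same} $C_0$ as in Proposition \ref{Exp-Est_Eig}, with $C_1$ independent of $k$. The paper sidesteps this by working only for $t \geq T_k$ and observing that there $E_k\,t^{-(2n+2)} \leq 1$, so the $E_k$ term is dominated by $t^{4n+2}$ and never enters the constant. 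You can repair your argument the same way: once you restrict to $t \geq T_k$, every $s$ in the integration range satisfies $s \geq T_k$, hence $E_k s^{2n} \leq s^{4n+2}$, the second Gamma integral merges into the first, and the $k$-dependence of the coefficient reduces to the single harmless factor $C_0^{k+1} = C_0 \cdot C_0^k$ that can be folded into $C_1$.
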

\begin{proof}
	Let $t> T_{k}$. By the Mean-Value Theorem we have
	\begin{align*}
	|v_{k}(t+1)-v_{k}(t)|= |v_{k}'(t_{1})|, \qquad t< t_{1} < t+1.
	\end{align*} 
	Since both  $|v_{k}'(t)|$ and $|v_{k}(t)|$ tend to zero as
        $t\rightarrow \infty$, we have 
	\begin{align*}
	|v'_{k}(t+1)| \leq |v_{k}'(t_{1})| \leq |v_{k}(t)| +|v_{k}(t+1)| \leq 2 | v_{k}(t)|.
	\end{align*}
	Arguing as above we have
	\begin{align*}
	|v_{k}'(t+1)-v_{k}'(t)|= |v_{k}''(t_{2})|= |t_{2}^{2n}(E_{k} - t_{2}^{2n+2}) v_{k}(t_{2})|, \qquad t< t_{2} < t+1.
	\end{align*} 
	For $t \geq T_{k}$ we obtain
	\begin{align*}
	|v'_{k}(t)|  &\leq |v'_{k}(t+1)| + |t_{2}^{2n}(E_{k} - t_{2}^{2n+2}) v_{k}(t_{2})|\\
	&\leq   | v_{k}(t)| \left( 2  + t^{4n+2}  (E_{k} t^{-(2n+2)} +
   1) \right) 
	\leq 2 |v_{k}(t)| \left( 1 + t^{4n+2}\right).
	\end{align*}
	We apply the result in Proposition \ref{Exp-Est_Eig}:
	\begin{align*}
|v'_{k}(t)| \leq 2 \left( 1 + t^{4n+2}\right) C_{0}^{k+1} e^{- B
          t^{2n+2}}. 
	\end{align*}
	Then we may conclude that
$$ 
|v'_{k}(t)| \leq C_{1} C_{0}^{k} e^{- B' t^{2n+2}} , 
$$
where $ 0 < B' < B $ and $ C_{1} = \mathscr{O}((B-B')^{-2}) $. This
proves the assertion.
\end{proof}
We consider $v_{k}(z) $, $z \in \mathbb{C}$, which is the entire
continuation of $ v_{k}(t) $. We are interested to the \textit{order}
of $v_{k}(z)$. We recall that an entire function $f(z)$ is of finite order if there exist  $C, \, \alpha > 0$ such that
\begin{align*}
|f(z)| \leq e^{|z|^{\alpha}} , \qquad \forall \, |z| \geq C.
\end{align*}
We have
\begin{proposition}
  \label{finite-Type}
	$v_{k}(z)$ has order $2n+2$, is of finite type, and the following estimate holds
	\begin{align}
          \label{vk-est-complx}
	|v_{k}(z)|\leq C^{k+1} e^{c_{1}|z|^{2n+2}},
	\end{align} 
	for some $c_{1}$ and $C $ positive, independent of $ k $. 
\end{proposition}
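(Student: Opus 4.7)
The plan is to work directly from the explicit formula \eqref{Ev_Eigf}, which gives
\[
v_{k}(z) = e^{-z^{2n+2}/(2n+2)}\, L_{k}^{(\alpha)}\!\left(\tfrac{z^{2n+2}}{n+1}\right),
\qquad \alpha = -\tfrac{1}{2n+2}.
\]
Since $L_{k}^{(\alpha)}$ is a polynomial of degree $k$ and $e^{-z^{2n+2}/(2n+2)}$ is entire, the right hand side extends $v_k$ to an entire function on $\mathbb{C}$.

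For the upper bound I would treat the two factors separately. The exponential is elementary: $|e^{-z^{2n+2}/(2n+2)}| \le e^{|z|^{2n+2}/(2n+2)}$. For the polynomial factor, I would start from the series $L_k^{(\alpha)}(s) = \sum_{i=0}^k (-1)^i \binom{k+\alpha}{k-i}\,s^i/i!$ and use the crude estimate
\[
\Big|\binom{k+\alpha}{k-i}\Big| \le \frac{k^{k-i}}{(k-i)!} \le e^k,
\]
since each of the $k-i$ factors in the Pochhammer numerator has absolute value bounded by $k$, and $k^m/m! \le e^k$ by the Taylor series of $e^k$. This gives $|L_k^{(\alpha)}(s)| \le e^k \sum_{i=0}^k |s|^i/i! \le e^{k+|s|}$. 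Substituting $s = z^{2n+2}/(n+1)$ and multiplying by the bound on the exponential factor yields
\[
|v_k(z)| \le e^{k+1} \exp\!\left(|z|^{2n+2}\Big(\tfrac{1}{2n+2} + \tfrac{1}{n+1}\Big)\right) = C^{k+1} e^{c_1 |z|^{2n+2}},
\]
with $c_1 = 3/(2n+2)$. This proves \eqref{vk-est-complx}, shows that $v_k$ is of finite type, and gives the order bound $\le 2n+2$.

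To get the matching lower bound on the order, I would pick the ray $z = r e^{i\pi/(2n+2)}$, on which $z^{2n+2} = -r^{2n+2}$. There the exponential factor has absolute value $e^{r^{2n+2}/(2n+2)}$, while $L_k^{(\alpha)}(-r^{2n+2}/(n+1))$ is a nonzero polynomial in $r$ of degree $k(2n+2)$, hence for $r$ large it is bounded below by $c\,r^{k(2n+2)}$ in absolute value. Consequently $|v_k(re^{i\pi/(2n+2)})| \ge c\, r^{k(2n+2)}\, e^{r^{2n+2}/(2n+2)}$, which forces the order to be at least $2n+2$.

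The only mildly delicate point is the uniform-in-$k$ bound on the Laguerre coefficients, needed to obtain the exponential-in-$k$ prefactor $C^{k+1}$ rather than a worse dependence; the telescoping bound $|\binom{k+\alpha}{k-i}| \le e^k$ handles this, and nothing else beyond routine estimation is required.
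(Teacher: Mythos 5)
Your proof is correct, and it takes a genuinely different and more elementary route than the paper. The paper's proof solves the ODE $v_k'' = Q_k v_k$ by a Picard-type iteration: it sets $f_0 = v_k(0)$, $f_\ell(z) = f_0 + \int_0^z Q_k(\zeta) f_{\ell-1}(\zeta)(z-\zeta)\,d\zeta$, bounds the differences termwise by $S\,\tilde Q_k^\ell(z)|z|^{2\ell}/(2\ell)!$, sums the series to a $\cosh$, and then uses the maximum modulus principle on the disc $|z|\le cE_k^{1/(2n+2)}$ to convert $e^{c_2 E_k}$ into $C_2^{k+1}$; the lower bound on the order is obtained by appealing to the real-line decay estimate of Proposition \ref{Exp-Est_Eig}. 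You instead read off everything from the explicit formula \eqref{Ev_Eigf}: the telescoping bound $|\binom{k+\alpha}{k-i}|\le k^{k-i}/(k-i)!\le e^k$ (which uses $\alpha\in(-1,0)$ so each Pochhammer factor is in $(0,k)$) gives $|L_k^{(\alpha)}(s)|\le e^{k+|s|}$ uniformly in $k$, and combining with the trivial bound on the Gaussian-type factor gives the estimate with explicit $c_1=3/(2n+2)$; your lower bound on the order, by evaluating on the ray $z=re^{i\pi/(2n+2)}$ where $z^{2n+2}=-r^{2n+2}$ and observing that $L_k^{(\alpha)}(-u)$ has all positive coefficients, is also cleaner and more direct than the paper's. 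What the paper's approach buys is robustness — it would work for any second-order ODE of this shape even without a closed-form solution — while yours buys brevity and explicit constants when the Laguerre representation is available, as it is here. The only point you should flag is that the paper renormalizes the $v_k$ (Remark \ref{Part_Orth}) to have unit weighted $L^2$ norm; the normalizing constant is $\asymp (k+1)^{1/(4n+4)}$, hence absorbed into $C^{k+1}$, so this does not affect your conclusion, but it is worth one line to say so.
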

\begin{proof}
	We set $ f_{0}(z) = f_{0} = v_{k}(0)$ and
	\begin{align*}
	f_{\ell}(z) = f_{0}(z) + \int_{0}^{z} Q_{k}(\zeta) f_{\ell-1}(\zeta) (z-\zeta) d\zeta \qquad  \ell = 1,2,\dots\,\,.
	\end{align*}
	We have
	\begin{align*}
	&|f_{1}(z)\!-\!f_{0}(z)|\!=\! \left|  \int_{0}^{z}\!\!\! Q_{k}(\zeta) f_{0}(\zeta) (z-\zeta) d\zeta \right| \\
	&= \left| \int_{0}^{z}\!\!\! Q_{k}(\zeta) \left( v_{k}(0) + \zeta v'_{k}(0)\right) (z-\zeta) dz \right|
	\leq S \frac{\tilde{Q}_{k}(z) |z|^{2}}{2},
	\end{align*} 
	where $S = |v_{k}(0)| = | f_{0}| $ and $\tilde{Q}_{k}(z) = |z|^{2n}\left( |z|^{2n+2} +E_{k}\right)$.
	Using the above estimate at the step two we have
	\begin{align*}
	|f_{2}(z) - f_{1}(z)| = \left|\int_{0}^{z} Q_{k}(\zeta)  \left( f_{1}(\zeta)- f_{0}(\zeta)\right) (z-\zeta) d\zeta \right|
	\leq S \frac{\tilde{Q}^{2}(z) |z|^{4}}{4!}. 
	\end{align*}
	By induction argument we obtain 
	\begin{align*}
	|f_{\ell}(z) -f_{\ell -1}(z)| \leq S \frac{\tilde{Q}^{\ell}(z) |z|^{2\ell}}{(2\ell)!}, \qquad \forall \ell \geq 2.
	\end{align*}
	Due to the above estimate we have that the series $\sum_{\ell \geq 1} \left( f_{\ell}(z)- f_{\ell-1}(z)\right)$
	converges uniformly on compact sets and the term by term
        differentiation is permitted.
        Hence we may take
	a term by term second derivative of it. We consider the function 
	\begin{align*}
	f(z) = f_{0}(z) +\sum_{\ell= 1}^{\infty} \left(f_{\ell}(z) -f_{\ell-1}(z)\right).
	\end{align*}
	We have that
	\begin{align*}
	f''(z) = Q_{k}(z) f(z).
	\end{align*}
	$f(z)$ satisfies our equation with the same initial
        conditions at $z=0$ as $ v_{k}(z) $, so that
	\begin{align*}
	v_{k}(z) =  f_{0}(z) +\sum_{\ell= 1}^{\infty} \left(f_{\ell}(z) -f_{\ell-1}(z)\right).
	\end{align*} 
	If $|z| \geq c E_{k}^{1/(2n+2)}$ we have
	\begin{align*}
	|v_{k}(z)|& \leq S \left(1+ \sum_{\ell =1}^{\infty} \frac{\left(c_{1}|z|^{2n+2}\right)^{2\ell}}{(2\ell)!}\right)
	=S \cosh(c_{1}|z|^{2n+2}) \leq C e^{c_{1}|z|^{2n+2}} ,
	\end{align*}
        where $ c_{1} \geq 1 + c^{-2(n+1)} $ and $ C > 0 $ does not
        depend on $ k $.
        
	On the other side since $v_{k}(z)$ is a holomorphic function
        in the bounded domain $|z| \leq c E^{1/(2n+2)}_{k}$, its
        absolute value attains
	its maximum at some points on the boundary of the disc $|z| \leq c E_{k}^{1/(2n+2)}$:
	\begin{align*}
	|v_{k}(z)| \leq C e^{c_{2} E_{k}} \leq C_{2}^{k+1}.
	\end{align*} 
	We conclude that
	\begin{align*}
	|v_{k}(z)|\leq C_{2}^{k+1} e^{c_{1}|z|^{2n+2}},
	\end{align*}
	where $C_{2}$ is a suitable positive constant independent of $
        k $. This shows that the order of $v_{k}(z)$ is less or equal
        than $2n+2$; 
	on the other hand Proposition \ref{Exp-Est_Eig} shows that the
        order is greater or equal than $2n+2$.
\end{proof}
\noindent
Due to the Propositions \ref{Exp-Est_Eig} and \ref{finite-Type} and by the Remark at page 220 of \cite{Gelfand_Shilov} we have
\begin{theorem}\label{G-S_Char}
For any $ k $ the  function $v_{k}(t)$ belongs to the Gel'fand-Shilov  space
$S^{\frac{1}{2n+2}}_{\frac{2n+1}{2n+2}}(\mathbb{R})$ and moreover satisfies the
estimates
\begin{equation}
\label{Gelvk}
| t^{\alpha} \partial_{t}^{\beta} v_{k}(t)| \leq
C^{k+\alpha+\beta+1}_{v} \alpha!^{\frac{1}{2n+2}}
\beta!^{\frac{2n+1}{2n+2}} .
\end{equation}
\end{theorem}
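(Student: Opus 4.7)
The plan is to read Theorem \ref{G-S_Char} as a quantitative repackaging of the two estimates already established in Propositions \ref{Exp-Est_Eig} and \ref{finite-Type}, via the classical characterization of Gel'fand-Shilov spaces $S^{\mu}_{\nu}$ (with $\mu+\nu \geq 1$) recalled from \cite[p.~220]{Gelfand_Shilov}: a function $f$ belongs to $S^{\mu}_{\nu}(\mathbb{R})$ iff $f$ extends to an entire function satisfying $|f(x)| \leq A e^{-a|x|^{1/\mu}}$ on $\mathbb{R}$ together with $|f(z)| \leq A e^{b|z|^{1/(1-\nu)}}$ on $\mathbb{C}$. In our case $1/\mu = 1/(1-\nu) = 2n+2$, and these two growth conditions are precisely \eqref{vk-est-real} and \eqref{vk-est-complx}; so membership is immediate, and only the quantitative bound \eqref{Gelvk} with its explicit dependence on $k$ needs to be tracked.

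First I would dispose of the factor $t^{\alpha}$. From Proposition \ref{Exp-Est_Eig} one has $|t^{\alpha} v_k(t)| \leq C_0^{k+1} |t|^{\alpha} e^{-B|t|^{2n+2}}$. Maximising over $t \in \mathbb{R}$ (critical point at $|t|^{2n+2} = \alpha/(B(2n+2))$) and applying Stirling's formula produces
\[
\sup_{t \in \mathbb{R}} |t^{\alpha} v_k(t)| \leq C_1^{k+\alpha+1}\, \alpha!^{\frac{1}{2n+2}},
\]
with $C_1$ independent of $\alpha$ and $k$.

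Next I would estimate the pure derivatives by Cauchy's integral formula, applied on a disc $|z-t| = r$ of optimally chosen radius. Using Proposition \ref{finite-Type},
\[
|\partial_t^{\beta} v_k(t)| \leq \frac{\beta!}{r^{\beta}} \sup_{|z-t|=r} |v_k(z)| \leq \frac{\beta!}{r^{\beta}}\, C^{k+1} e^{c_1(|t|+r)^{2n+2}}.
\]
To combine this with $|t|^{\alpha}$ and the real-axis decay, I would upgrade the isotropic complex bound to an anisotropic one of Phragmén--Lindelöf type
\[
|v_k(x+iy)| \leq C_2^{k+1} e^{-B'|x|^{2n+2} + c_2|y|^{2n+2}},
\]
valid with $0 < B' < B$ and suitable $c_2 > 0$; this follows from a standard subharmonicity argument applied to $\log|v_k(z) e^{\frac{B}{2} z^{2n+2}}|$ in sectors where $\Re z^{2n+2} > 0$, combined with the real-axis bound \eqref{vk-est-real}. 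Optimising the Cauchy estimate by choosing $r \sim \beta^{1/(2n+2)}$ gives $\beta!/r^{\beta} \sim \beta!^{(2n+1)/(2n+2)}$, and subsequently maximising in $|t|$ as in the first step produces the $\alpha!^{1/(2n+2)}$ factor.

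The main technical obstacle is the anisotropic Phragmén--Lindelöf improvement: passing from the isotropic bound $e^{c_1|z|^{2n+2}}$ of Proposition \ref{finite-Type} to an estimate that decays in the real direction is essential to decouple the contributions of $\alpha$ and $\beta$ in the final product. Once this refinement is in place, the constants combine into a single $C_v^{k+\alpha+\beta+1}$ as claimed, with $k$-dependence tracked cleanly through the factors $C_0^{k+1}$, $C^{k+1}$ appearing in the two input propositions.
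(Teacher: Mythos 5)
Your proof is correct and follows essentially the same route as the paper: both start from the real-axis decay \eqref{vk-est-real} and the complex growth bound \eqref{vk-est-complx}, upgrade to an anisotropic estimate of the form $|v_k(x+iy)| \leq C^{k+1} e^{-B'|x|^{2n+2}+c_2|y|^{2n+2}}$, and then control derivatives by a Cauchy-type argument while handling the $t^\alpha$ factor by maximizing against the Gaussian-type decay. The only difference is that the paper delegates the Phragm\'en--Lindel\"of improvement and the derivative bound to Theorems 1, 2, 3 of \cite{Gelfand_Shilov} (pp.~213, 216, 219), whereas you sketch those auxiliary steps directly.
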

\begin{proof}
The proof is contained in \cite{Gelfand_Shilov} and we sketch it here
for convenience.

By estimates \eqref{vk-est-real}, \eqref{vk-est-complx} we may apply
Theorem 1 of \cite{Gelfand_Shilov}, p. 213, and conclude that on a
domain of the complex plane of the form $ |s| \leq K_{1} (1 + |t|) $,
$ K_{1} $ a suitable positive constant independent of $ k $, we have
\begin{equation}
\label{vk-GS-1}
| v_{k}(t+is) | \leq C_{3}^{k+1} e^{-B' t^{2n+2}},
\end{equation}
where $ C_{3} = \max\{C, C_{0}\} $ and $ B' $ differs as little as
desired from $B $.

Applying Theorem 2 of \cite{Gelfand_Shilov}, p. 216, because of the
above estimate, we obtain that
\begin{equation}
\label{vk-GS-2}
| v_{k}(t+is) | \leq C_{3}^{k+1} e^{-B' t^{2n+2} + c_{2} s^{2n+2}},
\end{equation}
for any $ t+is = z \in \mathbb{C} $. Here $ c_{2} $ is a constant
depending on $ B' $, $ c_{1} $ and $ K_{1} $.

Finally we apply Theorem 3 of \cite{Gelfand_Shilov}, p. 219, and
obtain that
\begin{equation}
\label{vk-GS-3}
| \partial_{t}^{\beta} v_{k}(t) | \leq C_{3}^{k+1} B^{\beta}
\beta^{\beta \frac{2n+1}{2n+2}} e^{-B'' t^{2n+2}},
\end{equation}
where $ B'' $ differs as little as desired from $ B' $ and $ B $ is a
positive constant independent of $ k $.

This concludes the proof of the theorem.
\end{proof}
%

%
%%%%%%%%%%%%%%%%%%%%-------Technical_Lemmas-------%%%%%%%%%%%%%%%%%%%%%%%%%%%%%%%%%%%
%
\subsection{Technical Lemmas}
\renewcommand{\theequation}{\thesubsection.\arabic{equation}}
\setcounter{equation}{0} \setcounter{theorem}{0}
The following paragraph is devoted to list several technical results used in the first two Sections.
\begin{lemma}\label{Tech_L1}
	Let $\theta$ be a positive real number.  Then for every positive integer $p$ the following identity holds
	\begin{align}\label{Form_L1}
	\left[\rho^{-\theta} \left(1- \theta  + \rho \partial_{\rho}\right)\right]^{p} = \rho^{-\theta p}\sum_{\ell=1}^{p}(-\theta)^{l-1} b_{p,\ell} 
	\left(1- \theta  + \rho \partial_{\rho}\right)^{p+1-\ell},
	\end{align}
	where the constants $b_{p,\ell }$ satisfy the estimate 
	$$
	b_{p,\ell}\leq 2^{p-1}(p-1)^{\ell}.
	$$
	In particular $b_{p,1}= 1$, $b_{p,2}= p(p-1)/2$, $b_{p,p} = ( p-1)!$.
\end{lemma}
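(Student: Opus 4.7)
The plan is to reduce the $p$-fold composition to a pure polynomial in the single operator $T := 1 - \theta + \rho\partial_{\rho}$ (acting on the right) multiplied by $\rho^{-p\theta}$ (on the left), and then to expand that polynomial. The whole argument hinges on one commutation identity: a short direct computation gives $\rho\partial_{\rho} \rho^{-\theta} = \rho^{-\theta}(\rho\partial_{\rho} - \theta)$, and therefore
\begin{equation*}
T\,\rho^{-\theta} = \rho^{-\theta}(T - \theta).
\end{equation*}

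Iterating this, I would prove by induction on $p$ that
\begin{equation*}
\bigl[\rho^{-\theta} T\bigr]^{p} \;=\; \rho^{-p\theta}\,\prod_{j=0}^{p-1}\bigl(T - j\theta\bigr).
\end{equation*}
The inductive step is immediate: writing $[\rho^{-\theta}T]^{p+1} = \rho^{-\theta} T\,[\rho^{-\theta}T]^{p}$ and moving the leading $\rho^{-\theta}T$ past the $\rho^{-p\theta}$ factor produces $\rho^{-(p+1)\theta}(T-p\theta)\prod_{j=0}^{p-1}(T-j\theta)$; since the factors $(T-j\theta)$ all commute with one another, reordering yields the desired product up to $j = p$.

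Next I would expand the product in descending powers of $T$. Setting $k=\ell-1$,
\begin{equation*}
\prod_{j=0}^{p-1}(T-j\theta) \;=\; \sum_{k=0}^{p-1}(-\theta)^{k}\,e_{k}(0,1,\dots,p-1)\,T^{p-k}
\;=\;\sum_{\ell=1}^{p}(-\theta)^{\ell-1}\,b_{p,\ell}\,T^{p+1-\ell},
\end{equation*}
where $b_{p,\ell} := e_{\ell-1}(0,1,\dots,p-1)$ is the elementary symmetric polynomial of degree $\ell-1$ in the indicated variables. The boundary cases are then a one-line check: $b_{p,1}=e_{0}=1$; $b_{p,2}=0+1+\cdots+(p-1)=p(p-1)/2$; and $b_{p,p}=e_{p-1}(0,1,\dots,p-1)$, for which every monomial containing the factor $0$ vanishes, leaving only $1\cdot 2\cdots(p-1)=(p-1)!$.

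It remains to bound $b_{p,\ell}$. Since $e_{\ell-1}(0,1,\dots,p-1)$ is a sum of $\binom{p}{\ell-1}$ products, each consisting of $\ell-1$ factors bounded by $p-1$, we have
\begin{equation*}
b_{p,\ell} \;\leq\; \binom{p}{\ell-1}(p-1)^{\ell-1} \;\leq\; 2^{p-1}(p-1)^{\ell},
\end{equation*}
which is exactly the claimed estimate. No step is really the obstacle here; the only place to be careful is the direction of the commutation and the bookkeeping when one converts the decreasing product $(T)(T-\theta)\cdots(T-(p-1)\theta)$ into the form indexed by $\ell$ in the statement.
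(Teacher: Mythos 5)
Your argument is correct and follows a genuinely different route from the paper. The paper proves \eqref{Form_L1} by induction on $p$ and extracts the recurrence $b_{p,1}=1$, $b_{p,p}=(p-1)\,b_{p-1,p-1}$, $b_{p,\ell}=b_{p-1,\ell}+(p-1)b_{p-1,\ell-1}$, from which the special values and the bound follow again by induction; the coefficients remain implicit. You instead isolate the single commutation relation $T\rho^{-\theta}=\rho^{-\theta}(T-\theta)$ for $T=1-\theta+\rho\partial_\rho$, pull all $\rho^{-\theta}$ factors to the left to obtain the closed form $\bigl[\rho^{-\theta}T\bigr]^{p}=\rho^{-p\theta}\prod_{j=0}^{p-1}(T-j\theta)$, and identify $b_{p,\ell}=e_{\ell-1}(0,1,\dots,p-1)$ as an elementary symmetric polynomial (equivalently an unsigned Stirling number of the first kind). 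This gives more information than the paper's proof: the special values $b_{p,1}=1$, $b_{p,2}=p(p-1)/2$, $b_{p,p}=(p-1)!$ are one-line evaluations, and your closed form immediately implies the paper's recurrence via $e_{k}(x_1,\dots,x_n)=e_k(x_1,\dots,x_{n-1})+x_n\,e_{k-1}(x_1,\dots,x_{n-1})$. Two small remarks on the bound. First, the step $\binom{p}{\ell-1}(p-1)^{\ell-1}\leq 2^{p-1}(p-1)^{\ell}$ is correct for $p\geq 2$ (since $\binom{p}{\ell-1}\leq 2^{p-1}(p-1)$ there), but both your chain and the lemma's stated bound $b_{p,\ell}\leq 2^{p-1}(p-1)^{\ell}$ degenerate at $p=1$, where $b_{1,1}=1>0=2^{0}\cdot 0^{1}$; this is a defect of the stated estimate, not of your argument, and is immaterial since the lemma is applied with $p=2m\geq 2$. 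Second, you could sharpen the count for free: because $0$ is among the variables, $e_{\ell-1}(0,1,\dots,p-1)$ has only $\binom{p-1}{\ell-1}$ nonvanishing monomials, giving $b_{p,\ell}\leq\binom{p-1}{\ell-1}(p-1)^{\ell-1}\leq 2^{p-1}(p-1)^{\ell-1}$, which is slightly better than claimed.
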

\begin{proof}
By induction we have that the coefficients $b_{p,\ell}$ satisfy the recurrence relations: $b_{p,1}= 1$,  $b_{p,p} = b_{p-1,p-1}( p-1)$ and 
$ b_{p,\ell} = b_{p-1,\ell}+ (p-1)b_{p-1,\ell-1}$, $\ell = 3,\dots, p-1 $, where $b_{p-1,\ell}$ are the coefficients of $(AB)^{p-1}$.
By induction we have the estimate.
\end{proof}
\begin{lemma}\label{Tech_L2}
	 Let $\nu$ be a positive integer number. Then the following identity holds
	 \begin{align*}%\label{rho_d_rho}
	 \left(\rho\partial_{\rho}\right)^{\nu} = \sum_{i=1}^{\nu}d_{\nu,i}\, \rho^{\nu+1-i}\partial_{\rho}^{\nu+1-i},
	 \end{align*}
	 where the constants $d_{\nu,i}$ satisfy the estimate 
	 $$
	 d_{\nu,i}\leq 2^{\nu+i}(\nu+1-i)^{i}.
	 $$
	 In particular $d_{\nu,1}= 1 = d_{\nu,\nu}$, $d_{\nu,2}=\nu(\nu-1)/2$ and  $d_{\nu,i}$,  $i=3,\dots,\nu-1$. 
\end{lemma}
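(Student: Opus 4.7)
The plan is to proceed by induction on $\nu$, exploiting the single commutation identity
\[
\rho\partial_{\rho}\bigl(\rho^{k}\partial_{\rho}^{k}\bigr)=k\,\rho^{k}\partial_{\rho}^{k}+\rho^{k+1}\partial_{\rho}^{k+1},
\]
which follows from the Leibniz rule applied to $\rho\cdot\partial_{\rho}(\rho^{k}\partial_{\rho}^{k})$. The base case $\nu=1$ is immediate, giving $d_{1,1}=1$.

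For the inductive step, I would apply $\rho\partial_{\rho}$ to the expansion at level $\nu$, using the above identity term by term, so that
\[
(\rho\partial_{\rho})^{\nu+1}=\sum_{i=1}^{\nu}d_{\nu,i}\Bigl[(\nu+1-i)\,\rho^{\nu+1-i}\partial_{\rho}^{\nu+1-i}+\rho^{\nu+2-i}\partial_{\rho}^{\nu+2-i}\Bigr].
\]
Collecting terms by the exponent $\nu+2-j$ (the first kind contributing with $j=i+1$, the second with $j=i$) yields the recurrence
\[
d_{\nu+1,j}=d_{\nu,j}+(\nu+2-j)\,d_{\nu,j-1},\qquad 1\leq j\leq \nu+1,
\]
with the convention $d_{\nu,0}=d_{\nu,\nu+1}=0$. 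This immediately gives $d_{\nu+1,1}=d_{\nu+1,\nu+1}=1$ and $d_{\nu+1,2}=(\nu+1)\nu/2$, confirming the stated special values. (These $d_{\nu,i}$ are, up to re-indexing $k=\nu+1-i$, the Stirling numbers of the second kind $S(\nu,k)$, which provides a sanity check.)

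For the quantitative bound, I would feed the inductive hypothesis $d_{\nu,i}\leq 2^{\nu+i}(\nu+1-i)^{i}$ and $d_{\nu,i-1}\leq 2^{\nu+i-1}(\nu+2-i)^{i-1}$ into the recurrence. Estimating $(\nu+1-i)^{i}\leq(\nu+2-i)^{i}$ on the first summand and combining, one obtains
\[
d_{\nu+1,i}\leq 2^{\nu+i}(\nu+2-i)^{i}+2^{\nu+i-1}(\nu+2-i)^{i}=3\cdot 2^{\nu+i-1}(\nu+2-i)^{i}\leq 2^{(\nu+1)+i}(\nu+2-i)^{i},
\]
which is exactly the desired bound at level $\nu+1$. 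There is no real obstacle here: the only point requiring a bit of care is the bookkeeping of the index shift $j\leftrightarrow i$ in the recurrence, and the choice of a slightly generous bound $3\leq 4$ at the last step to keep the geometric factor of the form $2^{\nu+i}$.
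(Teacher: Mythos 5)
Your proof is correct and follows essentially the same route as the paper: you derive the recurrence $d_{\nu+1,j}=d_{\nu,j}+(\nu+2-j)d_{\nu,j-1}$ (the paper writes the equivalent $d_{\nu,i}=d_{\nu-1,i}+(\nu+1-i)d_{\nu-1,i-1}$) from the single-step commutation identity, and then obtain the bound by feeding the inductive hypothesis into this recurrence. You simply make explicit the two steps the paper compresses into "by induction" (the derivation of the recurrence and the estimate), and add the Stirling-number identification as a sanity check, which is accurate.
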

\begin{proof}
	By induction we have that the coefficients $d_{\nu,i}$ satisfy the recurrence relations: $d_{\nu,1}= 1 = d_{\nu,\nu}$, $d_{\nu,2}=\nu(\nu-1)/2$
	and  $d_{\nu,i}= d_{\nu-1,i}+(\nu+1-i)d_{\nu-1,i-1}$, $i=3,\dots,\nu-1$, where $d_{\nu-1, i}$ are the
	coefficients of $(\rho\partial_{\rho})^{\nu-1}$. By induction we have the estimate.
	%$d_{\nu,i}\leq 2^{\nu+i}(\nu+1-i)^{i}$ ( or $\leq 2^{\nu} (\nu+1)^{\underline{i}}$).
\end{proof}
\begin{lemma}\label{Tech_L3}
	Let $\theta$, $f$, $q$ and $\gamma$ real number, $\theta>0$. Then for every integer $p$ the following identity holds
	\begin{align}\label{F-2}
	& \left[\rho^{-\theta} \left(1- \theta  + \rho \partial_{\rho}\right)\right]^{p} \rho^{q} \!\!\left[t^{f}u(t,\rho)\right]_{|_{t=\rho^{\gamma}x}}\!\!
	=\rho^{q-\theta p}\!\! \left[t^{f}\!\sum_{i=0}^{p}\rho^{-i}\mathscr{P}_{i}(t\partial_{t})\partial_{\rho}^{p-i} u(t,\rho)\right]_{|_{t=\rho^{\gamma}x}}
	\\ \nonumber
	&\qquad\qquad= \rho^{q-\theta p}  \left[t^{f} \left( \partial_{\rho}^{p} +\frac{p}{\rho} \left( \frac{p+1}{2}(1-\theta) +q +\gamma f + \gamma t\partial_{t}\right) \partial_{\rho}^{p-1}
	\right.\right.
	\\ \nonumber
	&\left.\left.\qquad\qquad\qquad\qquad\qquad\qquad\qquad\qquad\qquad\qquad
	+\sum_{i=2}^{p}\rho^{-i}\mathscr{P}_{i}(t\partial_{t})\partial_{\rho}^{p-i} \right) u(t,\rho)\right]_{|_{t=\rho^{\gamma}x}},
	\end{align}
	where
	\begin{align}\label{OP-Pi}
	\mathscr{P}_{i}(t\partial_{t})
	=\sum_{j=0}^{i} \text{\textcursive{ p}}_{i,j}(t\partial_{t})^{j}. 
	\end{align}
	The coefficients $\text{\textcursive{ p}}_{i,j}$, $ i=2,\dots,p-1$, are given by
	\begin{align}\label{Coef_1}
	\gamma^{j}\sum_{\nu=j}^{i}\sum_{\mu=\nu}^{i} \!\frac{(p+\nu\!-\!\mu)!}{j!(\nu-j)!(p-\mu)!}(-\theta)^{\mu-\nu}(1\!-\!\theta\! +\!q+\!\gamma f)^{\nu-j}
	b_{p,\mu-\nu+1}d_{p-\mu,i-\mu+1},
	%\quad i=2,\dots,p-1,
	\end{align}
	and
	\begin{align}\label{Coef_2}
	\text{\textcursive{ p}}_{p,j} = \sum_{\nu=j}^{p}\binom{\nu}{j}(-\theta)^{\nu-j}(1-\theta +q+\gamma f)^{\nu-j}b_{p,p-\nu+1}.
	\end{align}
	The constants $b_{p,\mu-\nu+1}$ and $d_{p-\mu,i-\mu+1}$ are the same of the previous Lemmas.  
\end{lemma}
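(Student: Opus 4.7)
The strategy is to combine Lemmas \ref{Tech_L1} and \ref{Tech_L2} with a multinomial expansion, then bookkeep the resulting coefficients. First, by Lemma \ref{Tech_L1}, the $p$--th power of $L := \rho^{-\theta}(1-\theta+\rho\partial_\rho)$ equals $\rho^{-\theta p}\sum_{\ell=1}^{p}(-\theta)^{\ell-1}b_{p,\ell}(1-\theta+\rho\partial_\rho)^{p+1-\ell}$. Next, write $\rho^{q}[t^{f}u]_{t=\rho^{\gamma}x}=x^{f}\rho^{q+\gamma f}u(\rho^{\gamma}x,\rho)$ and use the shift identity $(1-\theta+\rho\partial_\rho)\rho^{a}=\rho^{a}(1-\theta+a+\rho\partial_\rho)$, which iterates to $(1-\theta+\rho\partial_\rho)^{n}\rho^{a}=\rho^{a}(1-\theta+a+\rho\partial_\rho)^{n}$ with $a=q+\gamma f$. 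This isolates the overall power of $\rho$ and leaves the operator $(1-\theta+q+\gamma f+\rho\partial_\rho)^{p+1-\ell}$ acting on the composite function $u(\rho^{\gamma}x,\rho)$.

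Second, on the composite function the chain rule yields $\rho\partial_\rho[u(\rho^{\gamma}x,\rho)]=[\gamma t\partial_{t}u+\rho\partial_{\rho}u]_{t=\rho^{\gamma}x}$, so the one-variable operator $\rho\partial_\rho$ corresponds to the two-variable operator $\gamma t\partial_{t}+\rho\partial_\rho$ (with $\partial_t$ and $\partial_\rho$ now explicit partials). Since $\gamma t\partial_{t}$, $\rho\partial_\rho$ and scalars all commute pairwise, the multinomial theorem yields
$$
(1-\theta+q+\gamma f+\gamma t\partial_{t}+\rho\partial_\rho)^{n}=\sum_{a+b+c=n}\binom{n}{a,b,c}(1-\theta+q+\gamma f)^{a}(\gamma t\partial_{t})^{b}(\rho\partial_\rho)^{c}.
$$
One then invokes Lemma \ref{Tech_L2} to rewrite $(\rho\partial_\rho)^{c}=\sum_{k=1}^{c}d_{c,k}\rho^{c+1-k}\partial_{\rho}^{c+1-k}$, turning each term into one with equal powers of $\rho$ and $\partial_\rho$; collecting these by the power of $\partial_\rho$ brings the expression to the form $[t^{f}\sum_{i=0}^{p}\rho^{-i}\mathscr{P}_{i}(t\partial_{t})\partial_\rho^{p-i}u]_{t=\rho^{\gamma}x}$ claimed in \eqref{F-2}, with $b$ producing the power $(t\partial_{t})^{j}$ and the constraint $c+1-k=p-i$ controlling the power of $\partial_\rho$.

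The extreme cases are checked first. For $i=0$ only $\ell=1$, $a=b=0$, $c=p$, $k=1$ survives, giving the leading $\partial_\rho^{p}$ with coefficient $1$ from $b_{p,1}=d_{p,1}=1$; for $i=1$ a direct count yields the stated $\frac{p}{\rho}\bigl(\tfrac{p+1}{2}(1-\theta)+q+\gamma f+\gamma t\partial_{t}\bigr)\partial_{\rho}^{p-1}$ using $b_{p,2}=p(p-1)/2$, $d_{p,2}=p(p-1)/2$, and $d_{p-1,1}=1$. For $2\le i\le p-1$, the double sum over $\nu,\mu$ in \eqref{Coef_1} is exactly the collection of admissible quintuples $(\ell,a,b,c,k)$ meeting $c+1-k=p-i$ and $b=j$, with $\nu\leftrightarrow b+(\text{contribution from }a)$ and $\mu\leftrightarrow \ell$ through the identifications $b_{p,\mu-\nu+1}$ (from Lemma \ref{Tech_L1}) and $d_{p-\mu,i-\mu+1}$ (from Lemma \ref{Tech_L2}); the top case $i=p$ is singled out in \eqref{Coef_2} because $(\rho\partial_\rho)^{0}=1$ produces no contribution from Lemma \ref{Tech_L2}, so the whole $\rho$-weight comes from Lemma \ref{Tech_L1} alone. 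The main obstacle is precisely this combinatorial matching: one must check that the three nested summations (over $\ell$, over $(a,b,c)$ with $a+b+c=p+1-\ell$, and over $k$ from Lemma \ref{Tech_L2}) reorganize, under the change of variables $(\nu,\mu)=(j+a,\ell-1+a)$ or similar, into the compact single-index form of \eqref{Coef_1} and \eqref{Coef_2}. This is a tedious but purely algebraic verification that can be carried out by induction on $p$ once the two extreme cases are in place.
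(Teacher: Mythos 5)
Your proposal follows essentially the same route as the paper's own (brief) proof: first the shift identity $(1-\theta+\rho\partial_\rho)\rho^q[t^fu]_{t=\rho^\gamma x}=\rho^q[t^f(1-\theta+q+\gamma f+\gamma t\partial_t+\rho\partial_\rho)u]_{t=\rho^\gamma x}$, then Lemma \ref{Tech_L1} to pull out $\rho^{-\theta p}$, and finally a commuting-operator expansion combined with Lemma \ref{Tech_L2} to collect the $\partial_\rho$ powers (the paper uses the binomial theorem for two commuting operators, you use the multinomial for three — a cosmetic difference since one factor is a scalar). You supply more of the coefficient bookkeeping (verifying $i=0,1$ explicitly and indicating the index change $\nu=j+a$, $\mu=\ell-1+\nu$), which the paper simply leaves to the reader, so there is no gap; the remark about "induction on $p$" is unnecessary — the identification of indices is a direct reorganization of the finite sums — but it is not incorrect.
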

\begin{proof}
	We have
	\begin{align*}
	&\left(1-\theta + \rho\partial_{\rho} \right) \rho^{q}\left[t^{f}u(t,\rho) \right]_{|_{t=\rho^{\gamma}x}}
	=\rho^{q}\left(1-\theta +q + \rho\partial_{\rho} \right )\left[t^{f}u(t,\rho) \right]_{|_{t=\rho^{\gamma}x}}
	\\
	&=\rho^{q} \left[t^{f}\left( 1-\theta+ q +\gamma f+ \gamma t\partial_{t} +\rho\partial_{\rho}\right) u(t,\rho)\right]_{|_{t=\rho^{\gamma}x}}.
	\end{align*}
	Applying the Lemma \ref{Tech_L1} we can rewrite the left hand side of (\ref{F-2}) in the following form
	\begin{align}\label{F-1}
	&\rho^{-\theta p}\sum_{\ell=1}^{p}(-\theta)^{l-1} b_{p,\ell} 	\left(1- \theta  + \rho \partial_{\rho}\right)^{p+1-\ell} \rho^{q}\left[t^{f}u(t,\rho) \right]_{|_{t=\rho^{\gamma}x}}\\
	\nonumber
	&=\rho^{q-\theta p} \left[t^{f}\sum_{\ell=1}^{p}(-\theta)^{l-1} b_{p,\ell} \left( 1-\theta+ q +\gamma f+ \gamma t\partial_{t} +\rho\partial_{\rho}\right)^{p+1-\ell}u(t,\rho)\right]_{|_{t=\rho^{\gamma}x}}.
	\end{align}
	We observe that if $Q_{1}$ and $Q_{2}$ be two operators such that $\left[Q_{1},Q_{2}\right]=0$ for every positive integer $\nu$ we have
	\begin{align*}
	(Q_{1}+Q_{2})^{\nu}= \sum_{i=0}^{\nu}\binom{\nu}{i} Q_{1}^{i}Q_{2}^{\nu-i}. 
	\end{align*}
	Taking advantage from the above formula and from the Lemma \ref{Tech_L2} we obtain (\ref{F-2}) from (\ref{F-1}). 
\end{proof}
\begin{lemma}\label{L_Fund_Sol}
	Let $\Theta_{k}$ be the ordinary differential equation of order $2m$, $m\in \mathbb{Z}_{+}$, given by
	\begin{align}\label{ODE}
\Theta_{k} = \left(\frac{d}{d \rho}\right)^{2m} + \left(\frac{2m i}{2m-1}\right)^{2m}E_{k},
	\end{align}
	where $E_{k}=4k(n+1)+2n+1$ is the eigenvalue of the
        eigenfunction $v_{k}(t)$, (see (\ref{Ev_Eigf}).)
	We shall use the following fundamental solution of $\Theta_{k}$
	\begin{align}\label{Sol_Fun}
	G_{k}(\rho) = (i c_{k})^{-(2m-1)} \frac{1}{m}
\sum_{j=0}^{[\frac{m-1}{2}]} 2^{-[\sin \theta_{j}]}  e^{- c_{k} \sin \theta_{j} |\rho|}
          \sin\left(c_{k} |\rho| \cos \theta_{j} + \theta_{j}\right) ,
	\end{align}
	where $c_{k}= 2m (2m-1)^{-1}E_{k}^{1/2m}$, $ \theta_{j} =
        \frac{\pi}{2m}(1+2j) $, $j = 0, 1, \ldots, 2m-1$ and $ [x] =
        \max \{ n \in \mathbb{Z}_{+} \ | \ n \leq x\} $, $ x \geq 0 $.
We point out that when $ m $ is odd the summand in \eqref{Sol_Fun}
corresponding to $ j = \frac{m-1}{2} $ is $ e^{- c_{k} |\rho|} $ and $
[\sin \theta_{j}] = 0 $ if $ j < \frac{m-1}{2} $, $ [\sin
\theta_{j}] = 1 $ when $ j = \frac{m-1}{2} $. 

	Moreover we have
	\begin{align}
          \label{FunEst}
	|G_{k}(\rho)|\leq  %\left(\frac{m}{\pi}\right)^{2m-1}
        % \left(\frac{2m -1}{2m}\right)^{2m-1}
        c_{k}^{-(2m-1)} e^{-c_{k} \sin(\pi/2m) |\rho|} .
	\end{align}
Moreover for every $\ell$, $ 0 \leq \ell \leq 2m-1 $, we have
	\begin{equation}\label{Der-Fun-odd}
	G_{k}^{(\ell)}(\rho) = i^{-(2m-1)} c_{k}^{-(2m - 1 - \ell)} \frac{1}{m}
        \sum_{j=0}^{[\frac{m-1}{2}]} 2^{-[\sin \theta_{j}]} e^{- c_{k} \sin \theta_{j}
          |\rho|} \cos\left(c_{k} |\rho| \cos \theta_{j} +
          (\ell+1) \theta_{j}\right) \sign(\rho) ,
	\end{equation}
        if $ \ell $ is odd and
	\begin{equation}\label{Der-Fun-even}
	G_{k}^{(\ell)}(\rho) = i^{-(2m-1)} c_{k}^{-(2m - 1 - \ell)} \frac{1}{m}
        \sum_{j=0}^{[\frac{m-1}{2}]} e^{- c_{k} \sin \theta_{j}
          |\rho|} \sin\left(c_{k} |\rho| \cos \theta_{j} +
          (\ell+1) \theta_{j}\right) ,
	\end{equation}
        if $ \ell $ is even.
\end{lemma}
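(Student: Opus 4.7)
The plan is to construct $G_{k}$ by an explicit Fourier/residue computation, exploiting the structure of the characteristic roots of $\Theta_{k}$, and then to deduce the derivative formulas and the bound by direct calculation.

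First I identify the characteristic roots. The equation $\lambda^{2m}+\left(\tfrac{2mi}{2m-1}\right)^{2m}E_{k}=0$ has roots $\lambda_{j}=ic_{k}e^{i\theta_{j}}$, $\theta_{j}=\pi(1+2j)/(2m)$, $j=0,\ldots,2m-1$, so that $\mathrm{Re}\,\lambda_{j}=-c_{k}\sin\theta_{j}$. Exactly $m$ roots lie in the open left half-plane (those with $j=0,\ldots,m-1$), and the involution $j\mapsto m-1-j$ sends $\theta_{j}$ to $\pi-\theta_{j}$, pairing these decaying modes into complex conjugates $\lambda_{m-1-j}=\overline{\lambda_{j}}$, with a single self-conjugate middle mode $\lambda=-c_{k}$ (at $\theta=\pi/2$) left over exactly when $m$ is odd.

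Since $(2mi/(2m-1))^{2m}E_{k}=(-1)^{m}c_{k}^{2m}$, I would write
$$
G_{k}(\rho)=\frac{(-1)^{m}}{2\pi}\int_{\mathbb{R}}\frac{e^{i\tau\rho}}{\tau^{2m}+c_{k}^{2m}}\,d\tau,
$$
close the contour in the upper half-plane for $\rho>0$, and apply the residue theorem at the poles $\tau_{j}=c_{k}e^{i\theta_{j}}$, $j=0,\ldots,m-1$; using $\tau_{j}^{2m-1}=-c_{k}^{2m-1}e^{-i\theta_{j}}$ one gets a finite sum of exponentials of the form $e^{i\theta_{j}}e^{-c_{k}\sin\theta_{j}\rho}e^{ic_{k}\cos\theta_{j}\rho}$. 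Grouping the conjugate pairs via $e^{i\theta_{j}}e^{ic_{k}\cos\theta_{j}\rho}-e^{-i\theta_{j}}e^{-ic_{k}\cos\theta_{j}\rho}=2i\sin(c_{k}\cos\theta_{j}\rho+\theta_{j})$ turns these into the real oscillatory form of \eqref{Sol_Fun}; the factor $2^{-[\sin\theta_{j}]}$ is precisely the weight that treats uniformly the contributions from the conjugate pairs (weight $1$) and the unpaired middle mode when $m$ is odd (weight $1/2$). The evenness of $G_{k}$, reflecting the invariance of the integrand under $\tau\mapsto-\tau$, allows one to replace $\rho$ by $|\rho|$ throughout.

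The derivative formulas \eqref{Der-Fun-odd}, \eqref{Der-Fun-even} then follow by induction on $\ell$, $0\leq\ell\leq 2m-1$, starting from
$$
\frac{d}{d\rho}\!\left[e^{-c_{k}\sin\theta_{j}|\rho|}\sin(c_{k}|\rho|\cos\theta_{j}+\theta_{j})\right]=c_{k}\,\sign(\rho)\,e^{-c_{k}\sin\theta_{j}|\rho|}\cos(c_{k}|\rho|\cos\theta_{j}+2\theta_{j}),
$$
which is nothing but the angle-addition identity $\cos\theta_{j}\cos X-\sin\theta_{j}\sin X=\cos(X+\theta_{j})$. Each differentiation shifts the phase by $\theta_{j}$, swaps $\sin\leftrightarrow\cos$, and produces a factor $c_{k}\sign(\rho)$, so that after $\ell$ steps the argument becomes $c_{k}|\rho|\cos\theta_{j}+(\ell+1)\theta_{j}$ with the expected $\sin/\cos$ parity and an extra $\sign(\rho)$ precisely for odd $\ell$; no Dirac contribution appears because $\ell\leq 2m-1$, so the identities are valid pointwise on $\rho\neq 0$. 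The bound \eqref{FunEst} is then immediate: $\sin\theta_{j}\geq\sin(\pi/(2m))$ for $0\leq j\leq\lfloor(m-1)/2\rfloor$, $|\sin|\leq 1$, $2^{-[\sin\theta_{j}]}\leq 1$, the sum has at most $\lceil m/2\rceil$ terms (absorbed by the $1/m$ in front), and $|(ic_{k})^{-(2m-1)}|=c_{k}^{-(2m-1)}$. The main technical obstacle will be the bookkeeping of the prefactors and of the parity cases for $m$ and $\ell$; in particular, matching the normalization $(ic_{k})^{-(2m-1)}/m$ in \eqref{Sol_Fun} with the constants produced by the residue calculation (equivalently, by the jump conditions $G_{k}^{(\ell)}(0+)=0$ for odd $\ell<2m-1$ and $G_{k}^{(2m-1)}(0+)=1/2$), and handling separately the self-conjugate middle mode when $m$ is odd, must be done with care.
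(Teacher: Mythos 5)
Your approach is the same as the paper's: represent $G_{k}$ as the inverse Fourier transform of $(-1)^{m}(\sigma^{2m}+c_{k}^{2m})^{-1}$, compute it by residues closing the contour appropriately for each sign of $\rho$, pair the conjugate roots $\theta_{j}\leftrightarrow\theta_{m-1-j}=\pi-\theta_{j}$ to get the real oscillatory form, treat the middle root separately when $m$ is odd, and then differentiate through the formula. The identification of the roots, the bookkeeping with $2^{-[\sin\theta_{j}]}$, and the estimate \eqref{FunEst} are all handled in essentially the same way.

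There is, however, a genuine gap in your treatment of \eqref{Der-Fun-odd}--\eqref{Der-Fun-even}. You write that ``each differentiation shifts the phase by $\theta_{j}$, swaps $\sin\leftrightarrow\cos$, and produces a factor $c_{k}\sign(\rho)$'' and then assert ``no Dirac contribution appears because $\ell\leq 2m-1$, so the identities are valid pointwise on $\rho\neq 0$.'' This is not self-evident from the inductive step: when you differentiate a summand that already carries a factor $\sign(\rho)$, the product rule produces a term $2\delta(\rho)$ multiplied by the value of the remaining factor at $\rho=0$, and \emph{a priori} the distributional derivative $G_{k}^{(\ell)}$ for even $\ell\geq 2$ differs from the pointwise formula by such a concentrated term. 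What must be verified is that these terms vanish, i.e.\ that
\begin{equation*}
\sum_{j=0}^{[\frac{m-1}{2}]} 2^{-[\sin\theta_{j}]}\cos\bigl(\ell\,\theta_{j}\bigr)=0
\end{equation*}
for each even $\ell$ with $2\leq\ell\leq 2m-1$. The paper proves exactly this identity by summing a geometric series in $e^{i2\pi/m}$ and isolating the middle term when $m$ is odd. Without this computation (or an explicit appeal to a substitute fact, e.g.\ that the Fourier integral for $G_{k}$ remains absolutely convergent after multiplication by $\sigma^{2m-2}$, hence $G_{k}\in C^{2m-2}(\mathbb{R})$; or the general smoothing property of fundamental solutions of ODEs of order $2m$), the derivative formulas are established only as classical derivatives on $\mathbb{R}\setminus\{0\}$ and not as the identities the lemma asserts. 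You do gesture at the jump conditions $G_{k}^{(\ell)}(0^{+})=0$, but merely mentioning them does not close the argument: it is precisely their verification that the paper's trigonometric cancellation carries out.
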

\begin{proof}
Using e.g. the residue theorem we have that
$$ 
G_{k}(\rho) = \frac{1}{2\pi} \int_{\mathbb{R}} e^{i \rho \sigma}
\frac{(-1)^{m}}{\sigma^{2m} + c_{k}^{2m}} d\sigma.
$$
If $ \rho \geq 0 $ we have
$$ 
G_{k}(\rho) = \left(i c_{k}\right)^{-(2m-1)} i \sum_{j=0}^{m-1}
\frac{1}{2m \ e^{i(2m-1) \theta_{j}}} e^{ic_{k} e^{i \theta_{j}}  \rho} .
$$
If $ \rho \leq 0 $ we have
$$ 
G_{k}(\rho) = - \left(i c_{k}\right)^{-(2m-1)}  i \sum_{j=m}^{2m-1}
\frac{1}{2m \ e^{i(2m-1) \theta_{j}}} e^{ic_{k} e^{i \theta_{j}}  \rho} .
$$
Since $ \theta_{\ell+m} = \theta_{\ell} + \pi $ for $ 0 \leq \ell \leq
m-1  $, we deduce that
\begin{equation}
  \label{Gk}
G_{k}(\rho) =  \left(i c_{k}\right)^{-(2m-1)}  i \sum_{j=0}^{m-1}
\frac{1}{2m \ e^{i(2m-1) \theta_{j}}} e^{ic_{k} e^{i \theta_{j}}  |\rho|} .
\end{equation}
Further we have that $e^{i(2m-1) \theta_{j}} = - e^{-i\theta_{j}}  $
for every $ j $, $ 0 \leq j \leq 2m-1 $, and $ \theta_{m-1-\ell} = \pi
- \theta_{\ell}$. Plugging this into the expression of $ G_{k} $ we,
if $ m $ is even, can sum the $ j $-th summand and the $ (m-1-j)$-th
summand obtaining that
\begin{multline*}
%\label{expsin}
- e^{i \theta_{j}} e^{i c_{k} e^{i \theta_{j}} |\rho|} - e^{i
  \theta_{m-1-j}} e^{i c_{k} e^{i \theta_{m-1-j}} |\rho|}
\\
=
- e^{i \theta_{j}} e^{i c_{k} e^{i \theta_{j} }|\rho|}
+ e^{-i \theta_{j}} e^{-i c_{k} e^{- i \theta_{j}}|\rho|}
\\
=
- e^{i \theta_{j} + i c_{k} \cos \theta_{j} |\rho| } e^{- c_{k} \sin \theta_{j} |\rho|}
+ e^{-i \theta_{j} - i c_{k} \cos \theta_{j}} e^{- c_{k} \sin
  \theta_{j}|\rho|}
\\
=
-2i e^{- c_{k} \sin \theta_{j}|\rho|} \left(\sin \theta_{j} \cos \left(c_{k}
  \cos \theta_{j} |\rho| \right) + \cos \theta_{j} \sin\left(c_{k}
  \cos \theta_{j} |\rho| \right) \right)
\\
=
-2i e^{- c_{k} \sin \theta_{j}|\rho|} \sin\left( c_{k}
  \cos \theta_{j} |\rho| + \theta_{j}\right) .
\end{multline*}
Plugging this into \eqref{Gk} and taking into account that the number
of summands is even, we obtain
$$ 
G_{k}(\rho) =  \left(i c_{k}\right)^{-(2m-1)} \frac{1}{m}
\sum_{j=0}^{\frac{m}{2}-1}  e^{- c_{k} \sin \theta_{j}|\rho|} \sin\left( c_{k}
  \cos \theta_{j} |\rho| + \theta_{j}\right) ,
$$
which is \eqref{Sol_Fun} for $ m $ even.

Consider now the case of $ m $ odd. The number of summands in
\eqref{Gk} is odd, so that we may proceed as above for all but one
summand, obtained when $ j = \frac{m-1}{2} $. Now for this value of $
j $, $ \theta_{j} = \frac{\pi}{2} $, which implies that the
corresponding summand is written as
$$ 
\left(i c_{k}\right)^{-(2m-1)} \frac{1}{2m}  e^{- c_{k} |\rho|} 
$$
which gives \eqref{Sol_Fun}. Deriving \eqref{FunEst} is
straightforward. 

Let us now turn to the derivatives of $ G_{k} $ when $ m $ is odd. We
have
\begin{multline*}
\frac{d}{d \rho} G_{k}(\rho) = (i c_{k})^{-(2m-1)} \frac{1}{m}
\sum_{j=0}^{[\frac{m-1}{2}]} 2^{-[\sin \theta_{j}]}  e^{- c_{k} \sin \theta_{j} |\rho|}
\left[ - c_{k} \sin \theta_{j}
  \sin\left(c_{k} |\rho| \cos \theta_{j} + \theta_{j}\right)
\right.
\\
\left.
  +
c_{k} \cos \theta_{j} \cos \left(c_{k} |\rho| \cos \theta_{j} +
  \theta_{j}\right) \right] \sign(\rho)
\\
=
i^{-(2m-1)} c_{k}^{-(2m-2)} \frac{1}{m}
\sum_{j=0}^{[\frac{m-1}{2}]} 2^{-[\sin \theta_{j}]} e^{- c_{k} \sin \theta_{j} |\rho|}
  \cos\left(c_{k} |\rho| \cos \theta_{j} + 2 \theta_{j}\right)
  \sign(\rho) ,
\end{multline*}
which is \eqref{Der-Fun-odd} when $ \ell = 1 $. Again
\begin{multline*}
\frac{d^{2}}{d \rho^{2}} G_{k}(\rho) = i^{-(2m-1)}  c_{k}^{-(2m-2)} \frac{1}{m}
\sum_{j=0}^{[\frac{m-1}{2}]} 2^{-[\sin \theta_{j}]} e^{- c_{k} \sin \theta_{j} |\rho|}
\left[ - c_{k} \sin \theta_{j}
  \cos \left(c_{k} |\rho| \cos \theta_{j} + 2 \theta_{j}\right)
\right.
\\
\left.
  -
c_{k} \cos \theta_{j} \sin \left(c_{k} |\rho| \cos \theta_{j} +
  2\theta_{j}\right) \right]
\\
+
i^{-(2m-1)} c_{k}^{-(2m-2)} \frac{1}{m}
\sum_{j=0}^{[\frac{m-1}{2}]} 2^{-[\sin \theta_{j}]} e^{- c_{k} \sin \theta_{j} |\rho|}
  \cos\left(c_{k} |\rho| \cos \theta_{j} + 2 \theta_{j}\right)
  2 \delta(\rho)
\\
= i^{-(2m-3)}  c_{k}^{-(2m-3)} \frac{1}{m}
\sum_{j=0}^{[\frac{m-1}{2}]} 2^{-[\sin \theta_{j}]} e^{- c_{k} \sin \theta_{j} |\rho|}
  \sin \left(c_{k} |\rho| \cos \theta_{j} + 3 \theta_{j}\right)
\\
+
i^{-(2m-1)} c_{k}^{-(2m-2)} \frac{2}{m}
\sum_{j=0}^{[\frac{m-1}{2}]} 2^{-[\sin \theta_{j}]}
  \cos\left(2 \theta_{j}\right) \delta(\rho) .
\end{multline*}
The last sum in the above formula gives zero:
$$ 
\sum_{j=0}^{[\frac{m-1}{2}]} 2^{-[\sin \theta_{j}]}
  \cos\left(2 \theta_{j}\right) = 0.
$$
In fact we have---we recall that $ m $ is odd and that $
\theta_{\frac{m-1}{2}} = \frac{\pi}{2m}(1 + 2 \frac{m-1}{2}) = \frac{\pi}{2} $---
\begin{multline*}
\sum_{j=0}^{\frac{m-1}{2}} 2^{-[\sin \theta_{j}]}
  \cos\left(2 \theta_{j}\right) = \sum_{j=0}^{\frac{m-3}{2}} 
  \cos\left(2 \theta_{j}\right)  + \frac{1}{2} \cos(2
  \theta_{\frac{m-1}{2}})
\\
= - \frac{1}{2} + \Re \left( e^{i \frac{\pi}{m}} \sum_{j=0}^{\frac{m-3}{2}} 
e^{i \frac{2\pi}{m} j}  \right)
=  - \frac{1}{2} + \Re \left( e^{i \frac{\pi}{m}} \frac{1 - e^{i \pi
      \frac{m-1}{m}}}{1- e^{i \frac{2\pi}{m}}}  \right)
\\
=
 - \frac{1}{2} + \Re \left( \frac{1 +  e^{ - i
      \frac{\pi}{m}}}{e^{- i \frac{\pi}{m}}- e^{i \frac{\pi}{m}}}
\right)
=
- \frac{1}{2} + \Re \frac{1 + \cos \frac{\pi}{m} - i \sin
  \frac{\pi}{m}}{ -2i \sin \frac{\pi}{m}} = 0.
\end{multline*}
This proves \eqref{Der-Fun-even} for $ \ell = 2 $. For $ \ell $ odd we
see that the $ \ell $-th derivative is a multiple of $ \sign (\rho) $
and \eqref{Der-Fun-odd} can be derived with an argument completely
analogous to the above. Assume that $ \ell $ is even. Then the $ \ell
$-th derivative is the sum of a term of the form \eqref{Der-Fun-even}
and a multiple of the distribution $ \delta $. The latter contains the
sum
$$ 
\sum_{j=0}^{\frac{m-1}{2}} 2^{-[\sin \theta_{j}]}
  \cos\left(\ell \theta_{j}\right) .
$$
This is zero since arguing as above we isolate the summand
corresponding to $ j = \frac{m-1}{2} $ obtaining the value $
\frac{(-1)^{\frac{\ell}{2}}}{2} $. The remaining sum then gives the
opposite using the same argument as above.

This proves \eqref{Der-Fun-even} and \eqref{Der-Fun-odd} when $ m $ is
odd. 

Assume $ m $ even. Then the factor $ 2^{-[\sin \theta_{j}]} \equiv 1
$. Again, arguing as above, we find that the distribution $ \delta $
has a factor containing the sum
$$ 
\sum_{j=0}^{\frac{m}{2} - 1}  \cos\left(\ell \theta_{j}\right) = 0 .
$$
This completes the proof of the lemma.

\end{proof}
\begin{lemma}\label{L-j_Est}
	Let $a$ and $b$ be two positive constants such that $a >b$. 
	Then there is a positive constant $C_{0}$ such that for every $R \geq C_{0}(j+1)$, $j\in \mathbb{Z}_{+}$, the following inequality holds
	\begin{align}\label{Int_Inq-j}
	\int_{R}^{+\infty}  \frac{1}{\tau^{j}} e^{-a|\rho-\tau|-b\tau}d\tau \leq \left(\frac{1}{a}+\frac{2}{a-b}\right) \frac{e^{-b\rho}}{\rho^{j}}.
	\end{align}  
\end{lemma}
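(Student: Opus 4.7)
The plan is to treat separately the two cases $\rho < R$ and $\rho \geq R$, and, inside the second case, to split the integration at $\tau=\rho$ according to the sign of $\rho-\tau$.

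In the (easy) case $\rho \leq R$ we have $|\rho-\tau| = \tau-\rho$ on the whole integration range, so the integrand equals $\tau^{-j}e^{a\rho-(a+b)\tau}$. Using $\tau^{-j} \leq R^{-j} \leq \rho^{-j}$ and the explicit value $\int_R^{+\infty} e^{-(a+b)\tau}d\tau = e^{-(a+b)R}/(a+b)$, together with $e^{a\rho-(a+b)R} \leq e^{-bR} \leq e^{-b\rho}$, one gets a bound of the form $\rho^{-j}e^{-b\rho}/(a+b)$, which is absorbed in the $1/a$ term of the claimed constant.

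The main case $\rho \geq R$ would be handled by writing
\begin{equation*}
\int_R^{+\infty} \tau^{-j}e^{-a|\rho-\tau|-b\tau}\,d\tau \;=\; J_1 + J_2,
\end{equation*}
where $J_1 = \int_R^\rho \tau^{-j}e^{-a(\rho-\tau)-b\tau}d\tau$ and $J_2=\int_\rho^{+\infty} \tau^{-j}e^{-a(\tau-\rho)-b\tau}d\tau$. For the tail $J_2$, the monotonicity $\tau^{-j}\leq \rho^{-j}$ on $[\rho,+\infty)$ gives directly
\begin{equation*}
J_2 \leq \rho^{-j} e^{a\rho}\int_\rho^{+\infty} e^{-(a+b)\tau}d\tau \;=\; \frac{\rho^{-j}e^{-b\rho}}{a+b} \;\leq\; \frac{e^{-b\rho}}{a\,\rho^{j}}.
\end{equation*}

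The serious part is $J_1 = e^{-a\rho} I$ with $I := \int_R^\rho \tau^{-j}e^{(a-b)\tau}d\tau$, because the integrand is the product of a decreasing factor $\tau^{-j}$ and an exponentially growing factor $e^{(a-b)\tau}$, so neither monotonicity alone suffices. The trick is to integrate by parts (differentiating $\tau^{-j}$, integrating $e^{(a-b)\tau}$):
\begin{equation*}
I \;=\; \frac{\rho^{-j}e^{(a-b)\rho} - R^{-j}e^{(a-b)R}}{a-b} + \frac{j}{a-b}\int_R^\rho \tau^{-j-1}e^{(a-b)\tau}d\tau.
\end{equation*}
Dropping the negative boundary term and using $\tau^{-j-1}\leq R^{-1}\tau^{-j}$ on $[R,\rho]$, one obtains $I \leq \rho^{-j}e^{(a-b)\rho}/(a-b) + (j/((a-b)R))\,I$. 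Choosing $C_{0} \geq 2/(a-b)$ so that the hypothesis $R\geq C_{0}(j+1)$ forces $j/((a-b)R) \leq 1/2$, the last term is absorbed on the left, leaving $I \leq 2\rho^{-j}e^{(a-b)\rho}/(a-b)$ and hence $J_1 \leq 2 e^{-b\rho}/((a-b)\rho^{j})$. Adding the bounds for $J_1$ and $J_2$ yields the stated constant $1/a + 2/(a-b)$. The main obstacle is this absorption step, which is precisely what dictates the explicit linear dependence of $R$ on $j$ in the hypothesis.
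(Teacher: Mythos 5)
Your argument is correct and reaches exactly the stated constant $\tfrac{1}{a}+\tfrac{2}{a-b}$ under the same linear constraint $R\gtrsim j/(a-b)$. The treatment of the tail integral $J_{2}$ (the paper's $(II)$) is the same in both proofs: monotonicity of $\tau^{-j}$ and $e^{-b\tau}$ on $[\rho,+\infty)$. Where the two proofs genuinely diverge is in the delicate piece $J_{1}=e^{-a\rho}\int_{R}^{\rho}\tau^{-j}e^{(a-b)\tau}d\tau$. You integrate by parts, differentiating $\tau^{-j}$ and integrating $e^{(a-b)\tau}$, drop the negative boundary term at $\tau=R$, bound $\tau^{-j-1}\leq R^{-1}\tau^{-j}$, and absorb the resulting error term $\tfrac{j}{(a-b)R}\,I$ into the left-hand side once $R\geq 2j/(a-b)$. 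The paper instead splits $e^{(a-b)\tau}=e^{\frac{a-b}{2}\tau}\cdot e^{\frac{a-b}{2}\tau}$, observes that $g(\tau)=\tau^{-j}e^{\frac{a-b}{2}\tau}$ is \emph{increasing} on $[R,\rho]$ precisely when $R>2j/(a-b)$, pulls out its supremum $g(\rho)=\rho^{-j}e^{\frac{a-b}{2}\rho}$, and integrates the remaining factor $e^{\frac{a-b}{2}\tau}$ over $(0,\rho)$. Both techniques are elementary and give the identical constant, but they realize the same threshold $R\gtrsim j/(a-b)$ for different reasons: in your proof it is the contraction constant that must drop below $1/2$ for absorption; in the paper's proof it is the critical point of $\tau\mapsto \tfrac{a-b}{2}\tau-j\log\tau$ that must lie to the left of the integration interval. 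A minor point in your favor: you explicitly dispose of the case $\rho\leq R$ (where no split at $\tau=\rho$ is possible), which the paper's proof silently omits.
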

\begin{proof}
	We write the right hand side of (\ref{Int_Inq-j}) as
	\begin{align*}
	\int_{R}^{\rho} e^{a\tau-a\rho} \frac{1}{\tau^{j}} e^{-b\tau}d\tau + \int_{\rho}^{+\infty} e^{-a\tau+a\rho} \frac{1}{\tau^{j}} e^{-b\tau}d\tau
	\doteq  (I) + (II).
	\end{align*}
	Since $\rho < \tau$ then $\rho^{-1}>\tau^{-1}$ and $ e^{-b\tau}< e^{-b\rho}$ we have
	\begin{align*}
	(II) \leq \frac{e^{-b\rho}}{\rho^{j}}\int_{\rho}^{+\infty}e^{-a(\tau-\rho)}d\tau = \frac{e^{-b\rho}}{a\rho^{j}} \int_{0}^{+\infty} e^{-s}ds =  \frac{e^{-b\rho}}{a\rho^{j}}.
	\end{align*}
	Let us estimate of term $(I)$.
        We take $\tau^{-j}= e^{-j\log\tau}$ and we consider the function 
	$f(\tau)= 2^{-1}( a\tau-b\tau) -j\log\tau$. Since
        $f^{(1)}(\tau) =2^{-1}(a-b) -j\tau^{-1}$,  $f(\tau)$ is decreasing function for $\tau < 2(a-b)^{-1}j$
	and increasing function for $\tau > 2(a-b)^{-1}j$.
	Since $e^{\inf f(\tau)}\leq  e^{f(\tau)} \leq e^{\sup f(\tau)}$ taking $R> 2(a-b)^{-1}j $ so that $f(\tau)$ is increasing function in the region $[R,\rho] $
	we have that $\sup f(\tau) =f(\rho)$. We have 
	\begin{align*}
	e^{f(\tau)} \leq \frac{1}{\rho^{j}} e^{\frac{(a-b)}{2}\rho}.
	\end{align*}
	For $R > 2(a-b)^{-1}j $ we have
	\begin{align*}
	(I)\leq &\frac{1}{\rho^{j}} e^{-\frac{(a+b)}{2}\rho} \int_{R}^{\rho} e^{\frac{(a-b)}{2}\tau}d\tau
	\leq  \frac{1}{\rho^{j}} e^{-\frac{(a+b)}{2}\rho} \int_{0}^{\rho} e^{\frac{(a-b)}{2}\tau}d\tau\\
	&
	= \frac{1}{\rho^{j}} e^{-\frac{(a+b)}{2}\rho}  \frac{2}{a-b}\left(e^{\frac{(a-b)}{2}\rho}-1\right)
	\leq  \frac{2}{a-b}  \frac{1}{\rho^{j}} e^{-b\rho}.
	\end{align*}
	Summing up we obtain (\ref{Int_Inq-j}).
\end{proof}
\begin{remark}\label{Rk_L-j_Est}
	Setting $a = c_{k} \sin(\pi/2m)= 2m (2m-1)^{-1}E_{k}^{1/2m} \sin(\pi/2m)$ and $b=c_{0} \sin(\pi/2m)= 2m(2m-1)^{-1}E_{0}^{1/2m}(\sin \pi/2m)$,
	$E_{k}=  4k(n+1)+2n+1$, $k \in \mathbb{Z}_{+}$, in the above Lemma we have
	\begin{align}\label{Int_Inq-E}
	\int_{R}^{+\infty}\!\!\!  \frac{1}{\tau^{j}} e^{- c_{k} \sin(\pi/2m)|\rho-\tau|-c_{0} \sin(\pi/2m)\tau}d\tau 
	\leq \frac{C}{ (k+1)^{1/2m}\rho^{j}} e^{-c_{0}\sin(\pi/2m)\rho},
	\end{align}  
	for $R \geq C_{0}(j+1)$. Here $C$ denotes a suitable constant
        depending only on $m$;  $C_{0}$ can be chosen greater
        than two and independent of $j$ and $k$.
\end{remark}
\begin{lemma}\label{L-0_Est}
	Let $ G_{0}(\rho)$ be the fundamental  solution of $\Theta_{0}$ used in Lemma \ref{L_Fund_Sol}.
	Then there is a positive constant $C_{0}$ greater than two such that for every $R \geq C_{0}(j+1)$, $j \in \mathbb{Z}_{+}$, we have
	\begin{align}\label{Int_Inq-2}
	\left|\int_{R}^{+\infty} \!\!\!G_{0}(\rho-\tau) \frac{1}{\tau^{j+1}} e^{-c_{0} \lambda_{0}''\tau} d\tau 
	- h_{0}(\rho)- h_{m-1}(\rho)\right| 
	\leq C_{1} \frac{e^{-c_{0}\lambda_{0}''\rho}}{\rho^{j}(j+1)},
	\end{align}
	where $C_{1}$ is a positive constant independent by $j$, $c_{0}=  2m (2m-1)^{-1}E_{0}^{1/2m} $ and
	\begin{align*}
	&h_{0}(\rho)=(i c_{0})^{-(2m-1)} \frac{1}{2im} e^{i (c_{0}  \lambda_{0} \rho + \theta_{0})} \int_{R}^{+\infty}  e^{ic_{0} \lambda_{0}'\tau} \frac{1}{\tau^{j+1}} d\tau,
	\\
	&h_{m-1}(\rho)= (i c_{0})^{-(2m-1)} \frac{1}{2im}  e^{i (c_{0}  \lambda_{m-1} \rho + \theta_{m-1})}  \int_{R}^{+\infty}  e^{ic_{0} \lambda_{m-1}'\tau} \frac{1}{\tau^{j+1}} d\tau,
	\end{align*}
	with $\lambda_{0}=\lambda_{0}'+i \lambda_{0}''= \cos \theta_{0} + i \sin \theta_{0}$, $ \theta_{0} = \frac{\pi}{2m}$,
	$\lambda_{m-1} = \lambda_{m-1}'+ i\lambda_{m-1}'' = \cos \theta_{m-1}  + i \sin \theta_{m-1} $, $ \theta_{m-1} = \frac{\pi(2m-1)}{2m}$.
	%and $c_{0}=  2m (2m-1)^{-1}E_{0}^{1/2m} $.  
	%
\end{lemma}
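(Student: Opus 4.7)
The plan is to expand $G_{0}(\rho-\tau)$ using the explicit formula \eqref{Sol_Fun} of Lemma \ref{L_Fund_Sol}, or equivalently the complex exponential form $G_{0}(\rho-\tau)=(ic_{0})^{-(2m-1)} i \sum_{j=0}^{m-1} (2m\, e^{i(2m-1)\theta_{j}})^{-1} e^{ic_{0}e^{i\theta_{j}}|\rho-\tau|}$ derived in the proof of that lemma. Each summand decays in $|\rho-\tau|$ at rate $c_{0}\sin\theta_{j}$. Among the indices $j=0,\dots,m-1$, exactly two, namely $j=0$ (with $\theta_{0}=\pi/(2m)$) and $j=m-1$ (with $\theta_{m-1}=\pi-\theta_{0}$), achieve the minimal decay rate $c_{0}\sin\theta_{0}=c_{0}\lambda_{0}''$, which coincides with the exponential rate of the weight $e^{-c_{0}\lambda_{0}''\tau}$. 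These are the ``resonant'' modes that will produce $h_{0}$ and $h_{m-1}$; the remaining $m-2$ modes are non-resonant and can be estimated directly.

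For the non-resonant modes, where $\sin\theta_{j}>\sin\theta_{0}$, I would apply Lemma \ref{L-j_Est} with $a=c_{0}\sin\theta_{j}$ and $b=c_{0}\lambda_{0}''$; since $a-b$ is bounded below by a positive constant depending only on $m$, the lemma yields a bound of the form $C\, e^{-c_{0}\lambda_{0}''\rho}/\rho^{j+1}$ for the contribution of each such mode. Under the hypothesis $R\geq C_{0}(j+1)$ with $C_{0}>2$, we have $1/\rho\leq 1/(C_{0}(j+1))$, so this is in turn bounded by $C'\, e^{-c_{0}\lambda_{0}''\rho}/((j+1)\rho^{j})$, which matches the right-hand side of \eqref{Int_Inq-2}.

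For the two resonant modes I would split the $\tau$-integration as $[R,\rho]\cup[\rho,+\infty)$. On $[R,\rho]$, the $\tau$-exponents in $e^{ic_{0}e^{i\theta_{j}}(\rho-\tau)}\, e^{-c_{0}\lambda_{0}''\tau}$ simplify dramatically: for $j=0$ the product collapses to $e^{ic_{0}\lambda_{0}\rho}\, e^{-ic_{0}\lambda_{0}'\tau}$, and for $j=m-1$ it collapses to $e^{-ic_{0}\bar\lambda_{0}\rho}\, e^{+ic_{0}\lambda_{0}'\tau}=e^{ic_{0}\lambda_{m-1}\rho}\, e^{ic_{0}\lambda_{m-1}'\tau}$, so the integrand over $[R,\rho]$ reduces to a prefactor of modulus $e^{-c_{0}\lambda_{0}''\rho}$ times a purely oscillatory function of $\tau$ divided by $\tau^{j+1}$. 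Using the identity $e^{i(2m-1)\theta_{j}}=-e^{-i\theta_{j}}$ to rewrite the residue coefficients of Lemma \ref{L_Fund_Sol} as $(ic_{0})^{-(2m-1)}e^{i\theta_{j}}/(2im)$, one recognises these as exactly $h_{0}(\rho)$ and $h_{m-1}(\rho)$, but with integration truncated at $\rho$ instead of $+\infty$. The truncation error is a tail integral $\int_{\rho}^{+\infty}e^{\pm ic_{0}\lambda_{0}'\tau}\,\tau^{-(j+1)}\,d\tau$, which by a single integration by parts is $O(\rho^{-(j+1)})$ and so, combined with the $e^{-c_{0}\lambda_{0}''\rho}$ prefactor, absorbs into the right-hand side of \eqref{Int_Inq-2}.

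It remains to treat the region $\tau\in[\rho,+\infty)$. Here one uses $G_{0}(\rho-\tau)=G_{0}(\tau-\rho)$ and applies \eqref{Sol_Fun} with argument $\tau-\rho\geq 0$; the modulus of every summand becomes $e^{-c_{0}\sin\theta_{j}(\tau-\rho)}$, and its product with $e^{-c_{0}\lambda_{0}''\tau}$ decays in $\tau$ at rate $c_{0}(\sin\theta_{j}+\lambda_{0}'')\geq 2c_{0}\lambda_{0}''$ even when $j\in\{0,m-1\}$. The same integral identity as in the proof of Lemma \ref{L-j_Est} (but now without the resonance) then gives $\leq C\, e^{-c_{0}\lambda_{0}''\rho}/((j+1)\rho^{j})$. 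The main obstacle is the careful bookkeeping of phases: correctly matching the conjugation $\lambda_{m-1}=-\bar\lambda_{0}$ and the residue phases $e^{i(2m-1)\theta_{j}}$ with the precise form of $h_{0}$ and $h_{m-1}$, so that the resonant contributions cancel against $h_{0}+h_{m-1}$ exactly, leaving only the remainders estimated above.
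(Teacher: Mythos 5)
Your proposal is correct and follows essentially the same route as the paper's proof: you decompose $G_0$ into its exponential modes, identify $j=0$ and $j=m-1$ as the resonant ones (the paper groups them as the $k=0$ summand of its sine representation \eqref{Sol_Fun} before re-expanding into the same pair of exponentials), split the $\tau$-integral at $\rho$ to extract $h_0 + h_{m-1}$ as the untruncated parts, and dispose of the non-resonant modes and the tail integrals by Lemma \ref{L-j_Est}-type estimates. The phase/sign bookkeeping you flag at the end is exactly the work the paper carries out, using $e^{i(2m-1)\theta_j} = -e^{-i\theta_j}$ and $\lambda_{m-1} = -\overline{\lambda_0}$, and the final $1/\rho \le 1/(C_0(j+1))$ step converts the $\rho^{-(j+1)}$ remainders into the stated $\rho^{-j}(j+1)^{-1}$ bound.
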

\begin{proof}
	We recall that 
	\begin{align*}
	G_{0}(\rho) = (i c_{0})^{-(2m-1)} \frac{1}{m}
	\sum_{k=0}^{[\frac{m-1}{2}]} 2^{-[\sin \theta_{k}]}  e^{- c_{0} \sin \theta_{k} |\rho|}
	\sin\left(c_{0} |\rho| \cos \theta_{k} + \theta_{k}\right).
	\end{align*}
	We have
	\begin{multline}\label{estG0}
	\int_{R}^{+\infty} \!\!\!G_{0}(\rho-\tau) \frac{1}{\tau^{j+1}} e^{-c_{0} \lambda_{0}''\tau} d\tau
	= (i c_{0})^{-(2m-1)} \frac{1}{m}
	\\
	\times
	\sum_{k=0}^{[\frac{m-1}{2}]} 2^{-[\sin \theta_{k}]}  
	\int_{R}^{+\infty} \! \!\!\! 
	e^{- c_{0} \sin \theta_{k} |\rho-\tau|}
	\sin\left(c_{0} |\rho-\tau| \cos \theta_{k} + \theta_{k}\right)
	\frac{1}{\tau^{j+1}} e^{-c_{0} \lambda_{0}''\tau} d\tau.
	\end{multline}
	We begin to handle the term $k=0$. Since
	\begin{multline*}
	%\label{expsin}
	e^{- c_{0} \sin \theta_{0}|\rho|} \sin\left( c_{0}
	\cos \theta_{0} |\rho| + \theta_{0}\right)
	\\ 
	=
	e^{- c_{0} \sin \theta_{0}|\rho|} \left(\sin \theta_{0} \cos \left(c_{0}
	\cos \theta_{0} |\rho| \right) + \cos \theta_{0} \sin\left(c_{0}
	\cos \theta_{0} |\rho| \right) \right)
	\\
	=
	\frac{1}{2i} \left( e^{i \theta_{0} + i c_{0} \cos \theta_{0} |\rho| } e^{- c_{0} \sin \theta_{0} |\rho|}
	- e^{-i \theta_{0} - i c_{0} \cos \theta_{0}} e^{- c_{0} \sin \theta_{0}|\rho|}\right)
	\\
	\frac{1}{2i} \left( 
	e^{i \theta_{0}} e^{i c_{0} e^{i \theta_{0} }|\rho|}
	- e^{-i \theta_{0}} e^{-i c_{0} e^{- i \theta_{0}}|\rho|}
	\right)
	\\
	=
	\frac{1}{2i} \left( e^{i \theta_{0}} e^{i c_{0} e^{i \theta_{0}} |\rho|}
	 + e^{i \theta_{m-1}} e^{i c_{0} e^{i \theta_{m-1}} |\rho|}
	 \right)
	\\
	\\
	=
	\frac{1}{2i} \left( e^{i \theta_{0}} e^{i c_{0} \lambda_{0} |\rho|}
	+ e^{i \theta_{m-1}} e^{i c_{0} \lambda_{m-1} |\rho|},
	\right).
	\end{multline*}
	we have
	\begin{multline*}
	\int_{R}^{+\infty} \!  
	e^{- c_{0} \sin \theta_{0} |\rho-\tau|}
	\sin\left(c_{0} |\rho-\tau| \cos \theta_{0} + \theta_{0}\right)
	\frac{1}{\tau^{j+1}} e^{-c_{0} \lambda_{0}''\tau} d\tau
	\\
	=
	\frac{1}{2i} \left( 
	e^{i \theta_{0}} \int_{R}^{+\infty} \! e^{i c_{0} \lambda_{0} |\rho-\tau|} \frac{1}{\tau^{j+1}} e^{-c_{0} \lambda_{0}''\tau} d\tau
	+
	e^{i \theta_{m-1}}\int_{R}^{+\infty} \! e^{i c_{0} \lambda_{m-1} |\rho-\tau|} \frac{1}{\tau^{j+1}} e^{-c_{0} \lambda_{0}''\tau} d\tau
	\right)
	\\
	=
	\frac{1}{2i} \left[
	e^{i \theta_{0}} \left(
	\int_{R}^{\rho} \! e^{i c_{0} \lambda_{0} (\rho-\tau)} \frac{1}{\tau^{j+1}} e^{-c_{0} \lambda_{0}''\tau} d\tau
	+
	\int_{\rho}^{+\infty}  e^{i c_{0} \lambda_{0} (\tau-\rho)} \frac{1}{\tau^{j+1}} e^{-c_{0} \lambda_{0}''\tau} d\tau
	\right)
	\right.
	\\
	\left.
	+
	e^{i \theta_{m-1}}
	\left(	\int_{R}^{\rho} \! e^{i c_{0} \lambda_{m-1} (\rho-\tau)} \frac{1}{\tau^{j+1}} e^{-c_{0} \lambda_{0}''\tau} d\tau
	+
	\int_{\rho}^{+\infty} \! e^{i c_{0} \lambda_{m-1} (\tau-\rho)} \frac{1}{\tau^{j+1}} e^{-c_{0} \lambda_{0}''\tau} d\tau
	\right)
	\right]
	\\
	=
	\frac{1}{2i} \left[
	e^{i (c_{0}  \lambda_{0} \rho + \theta_{0})} \left(
	\int_{R}^{+\infty} \!  \frac{e^{-ic_{0} \lambda_{0}'\tau} }{\tau^{j+1}} d\tau
	+
	\int_{\rho}^{+\infty} \!  \frac{e^{-i c_{0} \lambda_{0}' \tau}}{\tau^{j+1}} d\tau
	+
	\int_{\rho}^{+\infty}  \frac{e^{i c_{0} \lambda_{0}' \tau}}{\tau^{j+1}} e^{-2c_{0} \lambda_{0}''\tau} d\tau
	\right)
	\right.
	\\
	\left.
	+
	e^{i (c_{0}  \lambda_{m-1} \rho + \theta_{m-1})} \left(
	\int_{R}^{+\infty} \!  \frac{e^{-ic_{0} \lambda_{m-1}'\tau} }{\tau^{j+1}} d\tau
	+
	\int_{\rho}^{+\infty} \!  \frac{e^{-i c_{0} \lambda_{m-1}' \tau}}{\tau^{j+1}} d\tau
	\right. \right.
	\\
	\left.	\left.
	+
	\int_{\rho}^{+\infty}  \frac{e^{i c_{0} \lambda_{m-1}' \tau}}{\tau^{j+1}} e^{-2c_{0} \lambda_{0}''\tau} d\tau
	\right)
	\right],
	\end{multline*}
	where we used that $\lambda_{m-1}'= -\lambda_{0}'$ and $ \lambda_{m-1}''= \lambda_{0}''$.
	We obtain that
	\begin{multline*}
	(i c_{0})^{-(2m-1)} \frac{1}{m}
	\int_{R}^{+\infty} \!  
	e^{- c_{0} \sin \theta_{0} |\rho-\tau|}
	\sin\left(c_{0} |\rho-\tau| \cos \theta_{0} + \theta_{0}\right)
	\frac{1}{\tau^{j+1}} e^{-c_{0} \lambda_{0}''\tau} d\tau
	\\
	\hspace*{-26em}=h_{0}(\rho) +  h_{m-1}(\rho) 
	\\
	+(i c_{0})^{-(2m-1)}
	\frac{1}{2im} \left[
	e^{i (c_{0}  \lambda_{0} \rho + \theta_{0})} \left(
	\int_{\rho}^{+\infty} \!  \frac{e^{-i c_{0} \lambda_{0}' \tau}}{\tau^{j+1}} d\tau
	%\right.\right.
	%\\
	%\left.\left.
	+
	\int_{\rho}^{+\infty}  \frac{e^{i c_{0} \lambda_{0}' \tau}}{\tau^{j+1}} e^{-2c_{0} \lambda_{0}''\tau} d\tau
	\right)
	\right.
	\\
	\left.
	+
	e^{i (c_{0}  \lambda_{m-1} \rho + \theta_{m-1})} \left(
	\int_{\rho}^{+\infty} \!  \frac{e^{-i c_{0} \lambda_{m-1}' \tau}}{\tau^{j+1}} d\tau
	%\right.\right.
	%\\
	%\left.\left.
	+
	\int_{\rho}^{+\infty}  \frac{e^{i c_{0} \lambda_{m-1}' \tau}}{\tau^{j+1}} e^{-2c_{0} \lambda_{0}''\tau} d\tau
	\right)
	\right]
	\\
	=h_{0}(\rho) +  h_{m-1}(\rho) +(I)
	\end{multline*}
	The absolute value of $(I)$ can be estimated by
	\begin{align*}
	C'\frac{e^{-c_{0}\lambda_{0}''\rho}}{\rho^{j}j},
	\end{align*}
	where $C'$ is a positive constant independent by $j$.\\
	Concerning the other terms in sum (\ref{estG0}), 
	$$
	\int_{R}^{+\infty} \! \!\!\! 
	e^{- c_{0} \sin \theta_{k} |\rho-\tau|}
	\sin\left(c_{0} |\rho-\tau| \cos \theta_{k} + \theta_{k}\right)
	\frac{1}{\tau^{j+1}} e^{-c_{0} \lambda_{0}''\tau} d\tau, 
	\quad k =2, \dots, [\frac{m-1}{2}],
	$$
	we can apply the Lemma \ref{L-j_Est}. Since $ \sin \theta_{k} > \sin \theta_{0}= \lambda_{0}''$ the absolute value of the above quantity
	can be bonded by
	\begin{align*}
	\int_{R}^{+\infty} \!\!\! e^{-c_{0} \sin \theta_{k} |\rho-\tau|}\frac{1}{\tau^{j+1}} e^{-c_{0} \lambda_{0}''\tau} d\tau
	\leq C'' \frac{e^{-c_{0}\lambda_{0}''\rho}}{\rho^{j+1}},
	\end{align*}
	where $C''$ is a positive constant independent by $j$.\\
	Putting the above considerations together we obtain the (\ref{Int_Inq-2}). 
\end{proof}
\begin{remark}\label{Rk_L-0_Est}
	The functions $h_{0}(\rho)$ and $h_{m-1}(\rho)$ are both solutions of the problem $\Theta_{0} h = 0$.
\end{remark}
\begin{lemma}\label{L-log-Est}
	Let $a$,  $s$ be two positive real numbers and $j\in \mathbb{Z}_{+}$. Then for every $\varepsilon > 0$ there exist a positive constant $C_{\varepsilon}$,
	independent of $j$, such that 
	\begin{align}\label{Int_Est_log}
	\int_{0}^{+\infty} e^{-a\rho\log\rho} \rho^{sj} d\rho \leq C_{\varepsilon} \varepsilon^{j} j^{js}.
	\end{align}
\end{lemma}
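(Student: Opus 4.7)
The plan is to split the integral at a threshold $M=M(\varepsilon)>1$ chosen independently of $j$. For the tail $[M,+\infty)$, I would use the monotonicity $\rho\log\rho\geq\rho\log M$, valid for $\rho\geq M$, to estimate
\[
\int_M^{+\infty} e^{-a\rho\log\rho}\,\rho^{sj}\,d\rho
\leq \int_0^{+\infty} e^{-(a\log M)\rho}\,\rho^{sj}\,d\rho
= \frac{\Gamma(sj+1)}{(a\log M)^{sj+1}}.
\]
Stirling's inequality gives $\Gamma(sj+1)\leq C(sj)^{sj}$ up to subexponential factors, so this piece is bounded by $C s^{sj}(a\log M)^{-(sj+1)}j^{sj}$. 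Choosing $M$ large enough that $s/(a\log M)\leq\varepsilon^{1/s}$, say $M\geq\exp\bigl(s/(a\varepsilon^{1/s})\bigr)$, the tail becomes $\leq C'_\varepsilon\varepsilon^j j^{sj}$.

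For the central region $(0,M]$, I would observe that $-\rho\log\rho\leq 1/e$ on $(0,1]$ and is nonpositive on $[1,M]$, whence $e^{-a\rho\log\rho}\leq e^{a/e}$ throughout $(0,M]$. This gives
\[
\int_0^M e^{-a\rho\log\rho}\,\rho^{sj}\,d\rho \leq \frac{e^{a/e}\,M^{sj+1}}{sj+1}.
\]
As soon as $j\geq j_0(\varepsilon):=M/\varepsilon^{1/s}$, we have $(M/j)^{sj}\leq\varepsilon^j$, and the right-hand side is majorized by a constant times $\varepsilon^j j^{sj}$. For the remaining finitely many values $j<j_0(\varepsilon)$, the integrand is continuous in $j$ and the integrals are uniformly bounded, so they can be absorbed into the final constant $C_\varepsilon$. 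Combining the two bounds yields the claim.

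There is no serious obstacle here: the key point is that the super-linear term $\rho\log\rho$ makes the effective exponential decay rate $a\log M$ on $[M,+\infty)$ arbitrarily large as $M\to+\infty$, so the associated Gamma-function factor can be forced to contribute any prescribed $\varepsilon^j$ (relative to the $j^{sj}$ on the right-hand side). The only care required is to ensure that $M$, and hence the threshold $j_0(\varepsilon)$, depends solely on $\varepsilon$ and the fixed parameters $a$ and $s$, which is manifest from the choice above.
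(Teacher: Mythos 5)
Your proposal is correct, but it takes a genuinely different route from the paper. You split the domain at a threshold $M=M(\varepsilon)$, linearize the exponent to $\rho\log\rho\geq\rho\log M$ on the tail, invoke the Gamma function plus Stirling, and then handle $(0,M]$ by a crude supremum bound on the exponential together with the observation that $(M/j)^{sj}\leq\varepsilon^j$ once $j\geq M/\varepsilon^{1/s}$, absorbing the remaining finitely many $j$ into $C_\varepsilon$. The paper instead never splits the domain: it factors the integrand as
\[
e^{-a\rho\log\rho}\,\rho^{sj}
= \bigl(e^{-a\rho\log\rho+c\rho}\bigr)\bigl(\rho^{sj}e^{-(c-1)\rho}\bigr)\,e^{-\rho},
\]
bounds the first two factors by their suprema in $\rho$ — the second giving exactly $(s/(c-1))^{sj}e^{-js}j^{sj}\leq\varepsilon^j j^{sj}$ by a one-line critical-point computation, the first being a $j$-independent constant since $-a\rho\log\rho+c\rho$ is bounded above — and integrates the remaining $e^{-\rho}$. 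Both proofs exploit the same mechanism (the super-exponential weight absorbs $\rho^{sj}$ at a rate tunable with $j$), but the paper's factorization avoids Stirling, avoids the tail/bulk dichotomy, and avoids the special handling of the small $j$; your version is a bit longer but arguably makes the role of the threshold $M\to\infty$ and the effective decay rate $a\log M$ more transparent. One small point of care in your argument: "Stirling's inequality gives $\Gamma(sj+1)\leq C(sj)^{sj}$ up to subexponential factors" should be stated so that the constant $C$ is uniform in $j$ (which it is, since $\Gamma(x+1)\leq\sqrt{2\pi x}\,(x/e)^x e^{1/(12x)}\leq x^x$ for $x\geq1$), and one should note that $sj$ need not be an integer, so it is indeed $\Gamma$ and not a factorial that is being estimated.
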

\begin{proof}
	We have
	\begin{align*}
	\int_{0}^{+\infty} e^{-a\rho\log\rho} \rho^{sj} d\rho =  \int_{0}^{+\infty} e^{-a\rho\log\rho+c\rho} \rho^{sj} e^{-(c-1)\rho} e^{-\rho} d\rho, 
	\end{align*}
	where $c$ is a positive real number such that $s(2^{-1}\varepsilon)^{-1/s}+1 > c > s\varepsilon^{-1/s}+1$. We have
	\begin{align*}
	\sup_{\rho}\rho^{sj} e^{-(c-1)\rho} =  \left[\left(\frac{s}{c-1}\right)^{s}\right]^{j} e^{-js} j^{sj} \leq \varepsilon^{j} j^{js}.
	\end{align*}
	On the other hand $f(\rho)= -a\rho\log\rho+c\rho$ take its maximum in $\rho_{0} = e^{(c-a)/a}$. 
	We have that $e^{f(\rho_{0})} \leq \exp( a e^{(s(2\varepsilon^{1/s})^{-1}-a+1)a^{-1}}) \doteq C$. Setting $C= C_{\varepsilon}$ we have the assertion. 
\end{proof}
\begin{lemma}\label{Ge_vs_Ga}
	Let $s_{1}$ and $s_{2}$ two positive real number such that $s_{1} < s_{2}$ then $G^{s_{1}}(\Omega) \subset \gamma^{s_{2}}(\Omega)$.
\end{lemma}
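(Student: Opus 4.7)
The plan is to compare the two definitions directly and exploit the gap $s_{2}-s_{1}>0$. Let $f\in G^{s_{1}}(\Omega)$ and fix a compact set $K\subset\Omega$. By definition there are constants $A>0$ and $C_{K}>0$ such that
\begin{equation*}
|D^{\alpha}f(x)|\le A\,C_{K}^{|\alpha|}|\alpha|^{s_{1}|\alpha|},\qquad x\in K,\ \alpha\in\mathbb{Z}^{n}_{+}.
\end{equation*}
Given any $\varepsilon>0$, I would multiply and divide by $\varepsilon^{|\alpha|}|\alpha|^{s_{2}|\alpha|}$ so as to write
\begin{equation*}
A\,C_{K}^{|\alpha|}|\alpha|^{s_{1}|\alpha|}
= A\,\varepsilon^{|\alpha|}|\alpha|^{s_{2}|\alpha|}\,\Big(\tfrac{C_{K}}{\varepsilon}\Big)^{|\alpha|}|\alpha|^{-(s_{2}-s_{1})|\alpha|}.
\end{equation*}
Since $s_{2}-s_{1}>0$, the real-valued function $n\mapsto (C_{K}/\varepsilon)^{n}\,n^{-(s_{2}-s_{1})n}$ tends to $0$ as $n\to\infty$ (super-exponential decay beats exponential growth), hence it attains its supremum on $\mathbb{Z}_{+}$; call this finite supremum $M_{\varepsilon}$, with the convention $0^{0}=1$ for $\alpha=0$.

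Setting $C_{\varepsilon}:=A\,M_{\varepsilon}$ we obtain
\begin{equation*}
|D^{\alpha}f(x)|\le C_{\varepsilon}\,\varepsilon^{|\alpha|}|\alpha|^{s_{2}|\alpha|},\qquad x\in K,
\end{equation*}
which is exactly the Beurling-type estimate defining $\gamma^{s_{2}}(\Omega)$. Since $K$ and $\varepsilon$ were arbitrary, $f\in\gamma^{s_{2}}(\Omega)$, proving the inclusion. There is no real obstacle here: the whole argument is a comparison of growth rates, the only point worth recording is that the strict inequality $s_{1}<s_{2}$ is essential, because it is precisely what makes the ratio $(C_{K}/\varepsilon)^{n}/n^{(s_{2}-s_{1})n}$ bounded uniformly in $n$ no matter how small $\varepsilon$ is chosen.
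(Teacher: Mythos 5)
Your proof is correct and follows essentially the same route as the paper: factor out $\varepsilon^{|\alpha|}|\alpha|^{s_{2}|\alpha|}$ and observe that the remaining factor $(C_{K}/\varepsilon)^{|\alpha|}|\alpha|^{-(s_{2}-s_{1})|\alpha|}$ tends to $0$ and is therefore bounded by a constant $M_{\varepsilon}$. The only cosmetic difference is that the paper computes the bound explicitly by maximizing $t\mapsto t\log(C/\varepsilon)-(s_{2}-s_{1})t\log t$, whereas you simply invoke the existence of a finite supremum; both are equivalent and complete.
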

\begin{proof}
	We have that $AC^{|\alpha|}|\alpha|^{s|\alpha|}= AC^{|\alpha|} |\alpha|^{(s_{1}-s_{2})|\alpha|}|\alpha|^{s_{2}|\alpha|}$.
	We want to see that
	\begin{align*}
	C^{|\alpha|} |\alpha|^{(s_{1}-s_{2})|\alpha|} \leq C_{\varepsilon}\varepsilon^{|\alpha|} \qquad \forall \, \varepsilon > 0.
	\end{align*}
	We observe that since $s_{2} > s_{1}$ than $C^{|\alpha|} |\alpha|^{(s_{1}-s_{2})|\alpha|}\rightarrow 0$ for $|\alpha | \rightarrow +\infty$.\\
	We take the logarithm of both side of above inequality
	\begin{align*}
	|\alpha| \log\left(\frac{C}{\varepsilon}\right) - (s_{2}-s_{1}) \log |\alpha| \leq \log C_{\varepsilon}.
	\end{align*}
	Consider the function $ f(t)= t \log (C/\varepsilon) - (s_{2}-s_{1})t\log t$ in  $(1, +\infty)$. We have
	$f'(t)= 0$ for $t_{0}= e^{-1} (C/\varepsilon)^{1/(s_{2}-s_{1})}$. Setting $C_{\varepsilon} = \exp(f(t_{0}))$ we have the assertion.
\end{proof}

% 
%
%%%%%%%%%%%%%%%%%%%%%%%%%%%%%%%%%%%%%%%%%%%%%%%%%%%%%%%%%%%%%%%%%%%%%%%%
%    Bibliographies can be prepared with BibTeX using amsplain,
%    amsalpha, or (for "historical" overviews) natbib style.
%%%%%%%%%%%%%%%%%%%%%%%%%%%%%%%%%%%%%%%%%%%%%%%%%%%%%%%%%%%%%%%%%%%%%%%%
%
\bibliographystyle{amsplain}

\end{document}